\def\cF{{\cal F}}
\def\QED{\ $\blacksquare$\smallskip}
\def\de#1{\textit{#1}}
\def\rationals{{\mathbb Q}}
\def\reals{{\mathbb R}}
\def\integers{{\mathbb Z}}
\def\naturals{{\mathbb N}}
\def\raw{\rightarrow}
\def\Fix{\text{Fix}}
\def\us{s}
\def\ut{t}
\def\I{^{-1}}
\def\vn{\vec{n}}
\def\vm{\vec{m}}
\def\vp{\vec{p}}
\def\tf{\tilde{f}}
\def\tg{\tilde{g}}
\def\tM{\tilde{M}}
\def\tphi{\tilde{\phi}}
\def\tpsi{\tilde{\psi}}
\def\homeo{homeomorphism}
\def\homeos{homeomorphisms}
\def\Z{\integers}
\def\Q{\rationals}
\def\N{\naturals}
\def\R{\reals}
\DeclareMathOperator{\id}{id}
\DeclareMathOperator{\im}{im}
\def\tcF{\tilde{\cal F}}
\def\pA{pseudo-Anosov}
\DeclareMathOperator{\spec}{spec}
\DeclareMathOperator{\rot}{rot}
\DeclareMathOperator{\rank}{rank}
\def\tphi{{\tilde{\phi}}}
\def\tcF{\tilde{\cal F}}
\def\tgamma{{\tilde{\gamma}}}
\def\tx{{\tilde{x}}}
\def\ty{\tilde{y}}
\def\tz{\tilde{z}}
\DeclareMathOperator{\SL}{SL}
\def\osum{\oplus}
\def\ie{i.e.}
\def\cf{\textit{cf.}}
\def\tomega{{\tilde{\omega}}}
\def\cL{\tilde{L}}
\def\tL{\tilde{\cal{L}}}
\def\hT{\hat{T}}
\def\hh{\hat{h}}
\def\hht{\hat{t}}
\def\gen#1{\langle #1 \rangle}
\def\genp#1{\langle #1 \rangle_+}
\def\barg{\overline{g}}
\def\hatg{\hat{g}}
\def\seto{\{1\}}
\def\hS{\hat{S}}
\def\utn{\ut^{(0)}}
\def\utm{\ut^{(m)}}
\def\hL{\hat{L}}
\def\tR{\tilde{R}}
\def\talpha{\tilde{\alpha}}
\def\tbeta{\tilde{\beta}}
\def\hus{\hat{\us}}
\def\hSigma{\hat{\Sigma}}
\def\hsigma{\hat{\sigma}}
\def\td{\tilde{d}}
\begin{document}

\begin{center}
\textbf{\sectionfont
Transitivity of Surface Dynamics Lifted to Abelian Covers}\\
\medskip
 Philip Boyland \\
Dept. of Mathematics\\
University of Florida\\
   Gainesville, FL 32611-8105 \\ 
\end{center}

\medskip

\noindent\textbf{\sectionfont Abstract:}
 A \homeo\ $f$ of a manifold $M$ is called $H_1$-transitive if
there is a transitive lift of an iterate of $f$ to the universal
Abelian cover $\tM$. Roughly speaking, this means that $f$ has
orbits which repeatedly and densely explore all elements of $H_1(M)$.
For a rel \pA\ map $\phi$ of a compact surface $M$ we show that the following
are equivalent: (a) $\phi$ is  $H_1$-transitive, (b) the action
of $\phi$ on $H_1(M)$ has spectral radius one, and (c) the lifts
of the invariant foliations of $\phi$ to $\tM$ have dense leaves. 
The proof relies on a characterization of transitivity for twisted
$\Z^d$-extensions of a transitive subshift of finite type.
\medskip

\section{Introduction}
There are many ways to characterize the complexity of a dynamical
system  on a manifold $M$.
 In this paper we focus on the characterization of \de{$H_1$-transitivity}.
A \homeo\ $f$ is called $H_1$-transitive when, roughly speaking, 
it has orbits which repeatedly and densely explore all the elements
of first homology. As is natural and commonly done,  
we formalize this notion  by ``unwraping'' the 
manifold by passing to a covering space. For $H_1$-transitivity
the appropriate lift is to the universal Abelian cover $\tM$
which is the covering space whose automorphism group is equal to $H_1(M;\Z)$. 
Translations of  orbits lifted to  $\tM$ correspond 
to motion around homologically nontrivial loops in $M$.
Thus we adopt the definition:
\begin{definition}
A \homeo\ $f:M\raw M$ is called $H_1$-transitive if there
is a lift $\tg$ 
of an iterate of $f$ to the universal Abelian cover $\tM$ such
that $\tg$ has an orbit which is dense in $\tM$.
\end{definition}

Our main concern here is with a particular class of maps, rel \pA\ \homeos\
of surfaces. These maps are an essential piece of 
Thurston's classification of isotopy classes of surface
\homeos\ and have many nice dynamical properties including
a symbolic description by a transitive subshift of finite type.
Rel \pA\ maps are characterized by the existence
of a transverse pair of (mildly)
singular, invariant foliations, each equipped with a transverse measure
which expands or contracts under the map. Every nontrivial leaf of these
foliations is dense in the surface $M^2$. For this class of
maps we have the following 
equivalence between (a) the dynamical property of $H_1$-transitivity,
(b) an algebraic condition on the spectral radius $\rho(\phi_*)$ 
of the induced action of the  map $\phi$ on $H_1(M^2)$, 
 and (c) a topological condition
on the invariant foliations when lifted to the universal
Abelian cover.
\newpage
\begin{theorem}\label{pA2}
Assume that  $\phi:M^2\raw M^2$ is a rel \pA\ map.
The following are equivalent: 
\begin{compactenum}
 \item $\phi$ is $H_1$-transitive, 
 \item $\rho(\phi_*) = 1$,
 \item There is a leaf of the lifted foliation $\tcF^u$ 
 which is dense in the universal Abelian cover $\tM$.
 \end{compactenum} 
\end{theorem}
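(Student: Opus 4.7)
The plan is to reduce to symbolic dynamics via a Markov partition and then apply the characterization (alluded to in the abstract) of transitivity for twisted $\Z^d$-extensions of a transitive SFT. Set $d = \mathrm{rank}\, H_1(M;\Z)$, so the free part of the deck group of $\tM \to M$ is $\Z^d$. A Markov partition $\{R_1,\ldots,R_k\}$ for $\phi$ yields a transitive SFT $(\Sigma_A,\sigma)$ and a semiconjugacy $\pi:\Sigma_A\to M$. Choose a lift $\tilde R_i\subset\tM$ of each $R_i$. A lift $\tg$ of some iterate $\phi^k$ then sends the interior of each $\tilde R_i$ into a specific $\Z^d$-translate of some $\tilde R_j$, and these displacements assemble into a cocycle $\psi:\Sigma_A\to\Z^d$. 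The lifted dynamics is symbolically modeled by the twisted extension $\tilde\sigma(x,n)=(\sigma x,\,\phi_*^k n + \psi(x))$ on $\Sigma_A\times\Z^d$, and transitivity of $\tg$ on $\tM$ corresponds to transitivity of $\tilde\sigma$.

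For (1)$\Rightarrow$(2), the $n$th iterate has the form $\tilde\sigma^n(x,0)=(\sigma^n x,\,\psi_n(x))$ with $\psi_n(x)=\sum_{i=0}^{n-1}\phi_*^{k(n-1-i)}\psi(\sigma^i x)$. If some eigenvalue $\lambda$ of $\phi_*$ has $|\lambda|>1$, projecting $\psi_n$ onto the corresponding generalized eigendirection produces a sequence that either grows like $|\lambda|^n$ or stays bounded — in either case it fails to be dense in $\R$, so $\rho(\phi_*)\le 1$. Since $\phi$ is a \homeo, $\det\phi_*=\pm 1$, and combined with $\rho\le 1$ this forces $\rho(\phi_*)=1$. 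For (3)$\Rightarrow$(1), a dense leaf $\tilde L$ of $\tcF^u$ gives density in $\tM$ of the union $\bigcup_n\tg^n W$ for any local unstable arc $W$, since $\tg$ uniformly expands along $\tcF^u$ and carries long pieces of any unstable arc onto arbitrarily long stretches of (translates of) $\tilde L$; a standard Baire-category argument then furnishes a dense $\tg$-orbit.

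The central implication is thus (2)$\Rightarrow$(1) and (2)$\Rightarrow$(3). When $\rho(\phi_*)=1$, all eigenvalues lie on the unit circle and $\psi_n$ grows only polynomially. The main transitivity criterion for twisted $\Z^d$-extensions then reduces transitivity of $\tilde\sigma$ to the assertion that $\psi$ is not twisted-cohomologous to a cocycle with values in a proper rational subgroup of $\Z^d$. I expect this no-proper-subgroup condition to be the main obstacle: one must verify it intrinsically, using the minimality of $\cF^u$ on $M$ together with the freedom to refine the Markov partition or pass to an iterate so that the lifted rectangles $\tilde R_i$ together with their $\tg$-images generate all of $H_1(M;\Z)$. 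Once transitivity of $\tilde\sigma$ is established, (1) is immediate, and (3) follows by translating back: leaves of $\tcF^u$ correspond to lifts of stable cylinders in $\Sigma_A\times\Z^d$, so a dense $\tilde\sigma$-orbit yields a dense leaf of $\tcF^u$ in $\tM$.
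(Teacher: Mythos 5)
Your overall strategy — model the lifted dynamics by a twisted $\Z^d$-extension of a transitive SFT and characterize transitivity symbolically — is indeed the paper's strategy, and your (1)$\Rightarrow$(2) eigenvalue argument matches the paper's (it is essentially equation~\eqref{geo} in the proof of Theorem~\ref{MST}). However, the two harder implications contain genuine gaps.

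For (2)$\Rightarrow$(1), the criterion you invoke — ``$\psi$ is not twisted-cohomologous to a cocycle with values in a proper rational subgroup'' — is not the correct transitivity criterion and has a counterexample even in the untwisted case: take the full $2$-shift with constant height $h\equiv 1$ and group $\Z$. Then $h$ is not cohomologous to a cocycle into a proper subgroup (periodic sums realize every positive integer, including $1$), yet the skew product is not transitive because every orbit drifts to $+\infty$. The correct criterion (Coudene's Theorem~\ref{coudene}) involves the positive \emph{semigroup} generated by displacements, not the subgroup. More fundamentally, you overlook that $\rho(\phi_*)=1$ does not make any particular lift $\tphi$ transitive; it only guarantees that \emph{some} lift of \emph{some} iterate is. The missing ingredient is the paper's dimension result (Proposition~\ref{summary}(c)/Remark~\ref{pAtopdim}): the rotation set $\rot_F(\tphi^{N(\phi_*)})$ of the Fried quotient has full dimension, so one can choose a rational interior point $\vp/q$ and then consider $\delta_{-\vm}\tphi^{qN(\phi_*)}$, which has zero in the interior of its rotation set and hence is transitive by Theorem~\ref{pA1}. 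Without that freedom to translate and iterate, the implication fails — the paper's own example $\eta_1$ in \S\ref{examples}, and the Cerbelli--Giona map with $\rot_F(\tpsi)=[0,1]$, illustrate this.

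For (3)$\Rightarrow$(1), your claim that a single dense leaf $\tilde L$ makes $\bigcup_n\tg^n W$ dense for \emph{every} local unstable arc $W$ is not justified. If $W$ lies in a non-dense leaf $\tilde L'$ (and Proposition~\ref{folinfo}(c) shows such leaves always exist), then $\bigcup_n\tg^n W$ sits in the family of leaves $\tg^n\tilde L'$, and you have given no mechanism forcing that union to accumulate everywhere in $\tM$. The paper avoids this by proving (c)$\Rightarrow$(b) instead: if $\rho(\phi_*)\neq 1$, Remark~\ref{MSTrk} gives a linear functional along which two far-apart lifted points diverge under forward iteration, contradicting \eqref{dynmeaning} for two points on the same dense unstable leaf. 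You would also need an intermediate statement like Theorem~\ref{pA1} — which gives a five-way equivalence including total transitivity, dense periodic points, topological mixing, and dense leaf through a periodic point — before the logic of Theorem~\ref{pA2} closes up.
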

The proof of this theorem depends on Theorem~\ref{pA1} 
which gives a number of conditions which are equivalent
to the total transitivity (all iterates are transitive)
 of a lifted rel \pA\ map $\tphi$. These conditions
include that $\tphi$ is topologically mixing, that 
the periodic orbits of $\tphi$ are dense in $\tM$, and that
$\rho(\phi_*) = 1$ coupled with a condition on the rotation set
of the Fried quotient of $(\tphi, M^2)$. A version of Theorem~\ref{pA1}
for the annulus was in \cite{bgh} and for
the torus in \cite{parwanithesis}. 

The proof of Theorem~\ref{pA1} in turn depends on
Theorem~\ref{MST} which characterizes total transitivity 
of a twisted skew product with group factor $\Z^d$ 
over a base subshift of finite type $(\Sigma, \sigma)$.
The twisted skew products considered here 
 are maps $\tau:\Sigma\times\Z^d \raw \Sigma\times\Z^d$ of the form 
\begin{equation*}
\tau(\us, \vn) = (\sigma(\us), \Phi(\vn) + h(\us)),
\end{equation*}
 where $\Phi:\Z^d\raw
\Z^d$ is  the \de{twisting isomorphism} or just the
\de{twisting}, 
and $h:\Sigma\raw \Z^d$ is the \de{height function}.
When the twisting is trivial ($\Phi = \id$) the map
$\tau$ is called an \de{untwisted skew product} or
just a \de{skew product}. The ergodic theory and topological dynamics
 of untwisted skew products with various bases and group
components have 
been intensely studied for at least 50 years 
(see \cite{parrypollicott} for some history), and their use  as symbolic
models for dynamics lifted to covering spaces is well
established. The transitive untwisted skew products over  
subshifts of finite type with group factor $\Z^d$ 
were characterized by Coudene (\cite{coudene})
and those with group factor $\R^d$ by Nitica (\cite{nitica}). 
Twisted skew products are themselves a special case of the well-studied 
notion of a group extension (see, for example, \cite{mentzen}).

When a twisted skew product models 
the lift of a rel \pA\ map $\phi$ to the universal Abelian
cover, the twisting
isomorphism is the action of $\phi$ on
first homology, or $\Phi = \phi_*$. Thus to study
maps which do not act trivially on homology
we must consider the case of nontrivial twisting.
The first observation in this study is that the 
 coarse connection between
the dynamics of a lift $\tphi$ to $\tM$ and the action of $\phi_*$ on 
$H_1(M;\R)$ 
implies that when $\rho(\phi_*) > 1$, there will be  open sets in $\tM$
whose iterates under $\tphi$ will go to infinity 
exponentially fast (see \eqref{geo} below). Thus
$\rho(\phi_*) = 1$ is a necessary condition for transitivity
(the case $\rho(\phi_*) < 1$ cannot occur because $\phi$ 
is a \homeo).

The next obvious necessary condition for the transitivity
of $\tphi$ can be informally expressed by
``orbits of $\tphi$ must go to infinity in all directions''.
 Gottschalk and Hedlund were the first to
notice that this  condition can also be sufficient for
transitivity (\cite{GH}).
Perhaps the most common way to formalize the ``all directions''
condition for an untwisted skew product is to use
the collection of  displacements, $D(\tau)$, of
periodic orbits of the base map (see \S\ref{height}). 
Here we primarily use the asymptotic
average  displacements as they are
more tractable under the iterations and translations of 
maps required here. We maintain the usual
terminology from lifted dynamics and call
this average displacement the  \de{rotation vector} of
an orbit. The set of all rotation vectors is the 
\de{rotation set}, and we formulate the ``all
directions'' condition in Theorem~\ref{rotthm} by requiring  that
 zero be in the interior of
the rotation set. This condition alone does not imply
transitivity in the untwisted case, but requires the
addition of a condition,  the
\de{finite lifting property},  which is the analog of
the fact that the lift of  a \pA\ map
 to any compact covering space is transitive.

The definition of the displacement set or rotation vector
requires that each point in the base be assigned
a well-defined displacement in the group factor or
cover. This is only possible when the
skew product is untwisted or $\phi_* = \id$ (see Remark~\ref{diff}).
Thus the weaker hypothesis of $\rho(\phi_*) = 1$
requires additional consideration. The first
step is to note that 
 a classic theorem of Kronecker implies that for some $N>0$, 
$\spec(\phi_*^N) = \seto$, where $\spec$ indicates
the spectrum (see comment above Definition~\ref{Ndef}).
 
We call an isomorphism $\Psi$ with $\spec(\Psi) = \seto$ 
a \de{generalized shear},  
because  over the reals one can find a basis in
which $\Psi$  is represented by its Jordan matrix
of  ones on the diagonal and perhaps also
some ones on the super diagonal. However, in general one cannot
conjugate to this Jordon form in $\SL(d, \Z)$ and so 
we use instead the form given in Lemma~\ref{CFL}.
Using the basis of $\Z^d$ given by Lemma~\ref{CFL},
we can treat a $\tau$ with
generalized shear twisting as a sequence of untwisted
skew products over countable state Markov shifts and
so prove transitivity by induction. The $n=0$ step
of this argument requires the transitivity of
the largest quotient on which $\tau$ is untwisted.
The transitivity of this Fried quotient is obtained
using Theorem~\ref{rotthm} and the main induction step 
is handled by Lemma~\ref{MIL}.

In the last section of the paper after proving Theorem~\ref{pA2}
we comment in \S\ref{idcase} on some dynamical properties of the 
$\phi_* = \id$ case and in Proposition~\ref{folinfo}
 on some topological properties of the lifted foliations.

\section{Topological Transitivity and Countable State Markov Shifts}

In this paper $M$ is always a compact, orientable surface perhaps
with boundary. 
All \homeos\ $h:M\raw M$ are orientation preserving. 
If there is no coefficient ring given,
homology is always  with integer coefficients, 
and so $H_1(M) = H_1(M,\Z)\cong\Z^d$ for some $d\in\N$.

For a topological space $X$, the closure, interior and
frontier are denoted $Cl(X)$, $Int(X)$, and $Fr(X)$, respectively.
For any map $f$, its image is denoted $im(f)$.
For a \homeo\ $h$ of $X$, the \de{forward orbit} of a 
point $x\in X$ is
$o_+(x,h) := \{  x, h(x), h^2(x),\dots \}$, 
the \de{backward orbit} 
is $o_-(x,h) := \{ \dots, h^{-2}(x), h^{-1}(x), x \}$, and
the \de{orbit} is $o(x,h) = o_+(x,h) \cup o_-(x,h)$.

\subsection{Transitivity and mixing}
A \homeo\ $h$ is called \de{topologically transitive} or
just \de{transitive}
if for every pair of open sets $U_1$ and $U_2$ there is
an integer $n\in\Z$ with $h^n(U_1)\cap U_2 \not=\emptyset$, and
$h$ is called \de{topologically mixing}
if for every pair of open sets $U_1$ and $U_2$ there is
an integer $N\in\Z$ with $h^n(U_1)\cap U_2 \not=\emptyset$ for
all $n\geq N$.  If $h^n$ is transitive for all $n>0$, then 
$h$ is called \de{totally transitive}. 

It is standard that  
 if $h^n$ is transitive or topologically mixing for some
$n>0$, then $h$ has the same property. 
 Further, topologically mixing implies totally transitive
and in many cases is equivalent to it.
A standard result on transitivity is
\begin{lemma}\label{bairetrans}
Assume that $h$ is a \homeo\ of a complete, separable 
metric space $X$. The following are equivalent:
\begin{compactenum}
\item $h$ is transitive,
\item There exists an orbit of $h$ that is dense in
$X$, $Cl(o(x,h)) = X$,
\item There is a dense, $G_\delta$-subset $Y\subset X$ so
that $y\in Y$ implies that $Cl(o_+(y,h)) = X$
and $Cl(o_-(y,h)) = X$.
\end{compactenum}
\end{lemma}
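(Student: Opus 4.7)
The plan is to prove the cyclic implications (c) $\Rightarrow$ (b) $\Rightarrow$ (a) $\Rightarrow$ (c); the first two are immediate and all the content lies in the last. For (c) $\Rightarrow$ (b): since $Y$ is dense it is nonempty, and for any $y \in Y$ the full orbit $o(y,h) \supset o_+(y,h)$ is dense. For (b) $\Rightarrow$ (a): if $o(x,h)$ is dense, then given nonempty open $U_1, U_2$ one picks $a, b$ with $h^a(x) \in U_1$ and $h^b(x) \in U_2$, whence $h^b(x) \in h^{b-a}(U_1) \cap U_2$.

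For (a) $\Rightarrow$ (c) I apply the Baire category theorem. Fix a countable basis $\{V_k\}_{k \in \N}$ of $X$ (from separability) and set
\begin{equation*}
D_k^+ := \bigcup_{n \geq 0} h^{-n}(V_k), \qquad D_k^- := \bigcup_{n \geq 0} h^{n}(V_k),
\end{equation*}
so that $y \in D_k^+$ iff $o_+(y,h) \cap V_k \neq \emptyset$, and similarly for $D_k^-$. Each $D_k^{\pm}$ is open, and $Y := \bigcap_k (D_k^+ \cap D_k^-)$ is precisely the set of $y$ whose forward \emph{and} backward orbits are dense. This $Y$ is automatically a $G_\delta$, and if each $D_k^{\pm}$ is dense then Baire's theorem (using completeness of $X$) gives that $Y$ is dense.

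The main obstacle is showing density of the $D_k^{\pm}$. This amounts to upgrading transitivity to its one-sided form: for any nonempty open $U, V$ there exists $n \geq 1$ with $h^n(U) \cap V \neq \emptyset$. The pathological case in which $X$ has an isolated point must be dispatched separately; transitivity then forces $X$ to be a single finite periodic orbit, on which (c) is trivial. Assuming $X$ is perfect, I argue as follows. Pick $m \in \Z$ with $W := h^m(U) \cap V \neq \emptyset$ by transitivity; choose disjoint nonempty open $W_1, W_2 \subset W$, possible since $W$ has no isolated points; apply transitivity to produce $k \neq 0$ with $h^k(W_1) \cap W_2 \neq \emptyset$, taking $k \geq 1$ after possibly swapping $W_1$ and $W_2$; then
\begin{equation*}
h^{m+k}(U) \cap V \;\supset\; h^k(W_1) \cap W_2 \;\neq\; \emptyset.
\end{equation*}
Iterating this shift-by-a-positive-integer construction eventually produces an index $n \geq 1$ as required. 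Running the same argument for $h^{-1}$ (which is also transitive) gives density of the $D_k^-$, completing the proof.
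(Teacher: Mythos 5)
The paper states this lemma without proof, citing it as standard, so there is no paper argument to compare against; I will just assess correctness. Your Baire category argument is the expected one. The main step --- upgrading two-sided transitivity to a one-sided version by splitting $W = h^m(U)\cap V$ into disjoint open pieces $W_1, W_2$ and using transitivity on that pair --- is correct, and the disjointness is a nice touch: it automatically forces the transitivity-supplied exponent $k$ to be nonzero even though the paper's Definition allows $n=0$. The iteration "shift by a positive integer until the index is $\geq 1$" also works since the starting index $m$ is finite.

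There is, however, a genuine gap in your isolated-point dispatch. You assert that transitivity forces a space with an isolated point to be a single finite periodic orbit. That conclusion does follow from the common convention that transitivity requires $n \neq 0$: taking $U_1 = U_2 = \{p\}$ yields $h^n(p) = p$ with $n \neq 0$, so $p$ is periodic and its orbit is clopen and dense, hence all of $X$. But the paper's literal definition allows $n = 0$, under which the step fails. Concretely, $X = \Z$ with the discrete metric and $h(n) = n+1$ is a complete separable metric space and $h$ is transitive in the paper's sense (distinct singletons are connected by some translate, and overlapping open sets are handled by $n = 0$); yet $X$ is infinite and consists entirely of isolated points. This example is in fact a counterexample to the lemma as literally stated: (a) and (b) hold (the orbit of $0$ is dense), but no point has a dense forward orbit, so no dense $G_\delta$ $Y$ as in (c) exists. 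The fix is either to read the definition of transitivity as requiring $n \neq 0$ (almost certainly the intent, and then your dispatch is correct), or to add the hypothesis that $X$ has no isolated points. You should pin down which convention you are using; for the paper's applications --- surfaces, their Abelian covers, and the subshift spaces modeling rel pseudo-Anosov dynamics --- the relevant $X$ is perfect, so the issue is benign there.
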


\subsection{Countable state Markov shifts}\label{Markovdef}
We recall the basic definitions and properties of
countable state Markov shifts. For more details see
\cite{kitchens}.
A countable state Markov shift is built from a countable
set of \de{states} $S = \{1, 2, 3, \dots\}$.
 The \de{transition matrix} $C$ is
indexed by $S\times S$, and  the entries of $C$  are
all $0$ or $1$ and are denoted $C_{i,j}$. 
An \de{allowable one-step transition} for $C$ is
a pair of states $s_1$ and $s_2$ so that 
$C_{s_1, s_2} = 1$. An \de{allowable $n$-step transition} or
 \de{allowable $n$-block}
is a  list of $n$-states $s_1 s_2 \dots s_n$ with
each pair $s_i s_{i+1}$ an allowable one-step transition.
In general, if there is an allowable transition of any
length between two states $a$ and $b$, we say
that there is is an allowable transition between $a$ and
$b$, and this situation is denoted $a\raw b$. 

The collection of bi-infinite sequences of states is $S^\Z$ and
the \de{shift space}, $\Sigma$, defined by the matrix
$C$ is the subspace of sequences 
all of whose finite blocks represent  transitions
which are allowed by the matrix $C$, or equivalently,
\begin{equation*}
\Sigma = \{ \us\in S^\Z : C_{s_i, s_{i+1}} = 1\ \text{for all}\ i\in\Z\}.
\end{equation*}
The \de{shift map} on $\Sigma$ is the left shift $\sigma:\Sigma\raw\Sigma$.
The shift space and the shift together constitute the
countable state Markov shift which is denoted $(\Sigma,\sigma)$.
When $S$ is a finite set, $(\Sigma,\sigma)$ is called a
\de{subshift of finite type} or a \de{topological Markov chain}.

A metric on $S$ which gives it the discrete topology yields
 a product metric on
$S^\Z$ and  a subspace metric induced on $\Sigma$. Under the 
resulting topology
$\Sigma$ is a totally disconnected, separable, complete metric
space and the shift map
is a \homeo. If $S$ is finite, $\Sigma$ is compact, 
and otherwise it is not. All the transition matrices here
will have finite row and sum columns which implies that
$\Sigma$ is locally compact.

The cylinder sets form a countable base for the topology of $\Sigma$.
A \de{central cylinder set of length two} is 
\begin{equation}\label{cyldef}
[a, b]_0 := \{ \us\in\Sigma : s_0 = a\ \text{and}\ s_1 = b\}
\end{equation} 
where $a\raw b$ is an
allowable  one step transition for $(\Sigma,\sigma)$.

The standard characterization of  transitive 
subshifts of finite types also holds for countable state Markov shifts: 
the system $(\Sigma,\sigma)$ is topologically
transitive if and only if it is irreducible, \ie\ for any pair of states 
$a_1$ and $a_2$ there is an allowable transition  $a_1\raw a_2$  
Also, if $(\Sigma,\sigma)$ is transitive and has a fixed point,
then it is topologically mixing  (see Observation 7.2.2 in \cite{kitchens}).
If   $(\Sigma,\sigma)$ totally transitive, then as just noted,
it certainly has a periodic point, say of period $n$, and so
$\sigma^n$ is topologically mixing, and so $\sigma$ is also.
Thus for countable state Markov shifts,
totally transitive is equivalent to topologically mixing.

\section{Abelian Covering Spaces and the Rotation set}
 Recall that regular, connected
covering spaces of the surface $M$ are in one-to-one correspondence with
normal subgroups $G \triangleleft\pi_1(M)$. For the cover corresponding
to $G$, the \de{deck group}  (also called the group of cover automorphisms)
is naturally identified with the quotient  $\pi_1(M)/G$. 
We shall be exclusively concerned with \de{Abelian covers}.
 These are the covering spaces for which the deck group 
is Abelian. The largest such cover corresponds
to $G = [\pi_1(M), \pi_1(M)]$, the commutator subgroup, in
which case the deck group is 
$ H_1(M)$. This covering space
is called the \de{universal Abelian cover} and is denoted
here as $\tM$. Any other Abelian cover can be obtained
by moding out $\tM$ by the action of a subgroup  
$\Gamma\subset H_1(M)$. This quotient is denoted
 $ \tM_\Gamma := \tM/\Gamma$. Note that $\tM$ is a cover over
$\tM_\Gamma$ with deck group $\Gamma$ and  $\tM_\Gamma$ 
is a cover over $M$ with deck group  $H_1(M)/\Gamma$.

Any \homeo\ $f:M\raw M$ lifts to the universal
Abelian cover. For smaller Abelian covers, $f$ lifts to
$\tM_\Gamma$ if and only if $f_*(\Gamma)=\Gamma$,
where $f_*:H_1(M)\raw  H_1(M)$ is the induced action.
For a cover $\tM_\Gamma$ to which $f$ does lift,
the induced action of $f_*$ on the deck group 
$H_1(M)/\Gamma$ is denoted $f_\Gamma$. If $\tf$ is
a lift of $f$ to $\tM_\Gamma$, a fundamental relation is
\begin{equation}\label{equilift}
\tf\circ\delta_{g} = \delta_{f_\Gamma(g)}\circ \tf,
\end{equation}
where $\delta_g$ is the deck transformation corresponding
to $g\in H_1(M)/\Gamma$. Thus $\tf$ commutes with all deck
transformations precisely when $f$ acts trivially on 
$H_1(M)/\Gamma$.

\subsection{Subgroups of finitely generated Abelian groups}\label{subgroup}
 If  $\Gamma\subset \Z^d$ is a rank
$k$ subgroup, then  there is a basis $\{u_1, \dots, u_d\}$ of
$\Z^d$ and positive integers 
$a_1, \dots, a_k$, so that
\begin{equation}\label{fundfact}
\{a_1 u_1, \dots, a_k u_k\}
\end{equation}
 is a basis for $\Gamma$. This
is usually given as a simple consequence of the Smith normal
form (for example, \cite{intmat}).
This fundamental fact implies that $\Gamma$ is \de{co-finite},
\ie\ the quotient group $\Z^d/\Gamma$ is finite, if and
only if $\Gamma$ has rank $d$. In this case it follows
that if $K$ is the order of $\Z^d/\Gamma$, then
$K \Z^d \subset \Gamma$ and further,  if we form
a matrix $M$ using a basis for the $\Gamma$ as the
columns, then the order of $\Z^d/\Gamma$ is 
$|\det(M)|$. 

For a subset $X$ of an Abelian group $G$, let
$\gen{X}$ be the subgroup
generated by $X$, and 
the positive semigroup generated by $X$ is 
  \begin{equation*}
\genp{X} = \{ n x : n\in \N, n>0, \ \text{and}\ x\in X\}
\end{equation*}
It is easy to see that
if $G$ is a finite Abelian group and
$X$ is a subset, then $\genp{X} = \gen{X}$.

The subgroup $\Gamma\subset \Z^d$ is called \de{pure} if
whenever $g\in\Gamma$ is divisible in $\Z^d$, it
is divisible in $\Gamma$, \ie\ if $g\in\Z^d$ and 
$mg\in\Gamma$ for some $m\not= 0$, then $g\in\Gamma$.
Thus $\Gamma$ is pure if and only if the integers
$a_j$ in \eqref{fundfact} are all equal to $1$, and
$\Gamma$ is pure if
and only if the quotient $\Z^d/\Gamma$ is torsion-free or trivial. 
Also, the subgroup $\gen{g}$ generated by a single element $g$
of $\Z^d$ is pure if and only if $g$ is indivisible in
$\Z^d$. In addition, if $\Gamma$ is pure in $\Z^d$, then
it is always a summand, \ie\ there is a subgroup $H\subset
\Z^d$ with $\Z^d = \Gamma \osum H$ (this and all direct
sums in this paper are internal). Equivalently, if
$\Gamma$ is pure, then any basis of $\Gamma$ can be extended
to a basis of $\Z^d$. Finally, since any subgroup $H$
of $\Z^d$ is free and thus isomorphic to some $\Z^k$,
if $\Gamma\subset H$ and $\Gamma$ is pure in $H$ ($g\in H$
with $mg\in\Gamma$ with $m\not = 0$ implies $g\in\Gamma$),
then any basis of $\Gamma$ can be extended to a basis
of $H$.
For a subset $X$ of an Abelian group $G$, let $P(X)$ denote the
smallest pure subgroup of $G$ which contains $X$. The 
group $P(X)$ is commonly called the \de{purification} of $X$.

\subsection{The Fried cover}\label{sect:friedcover}
Fried pointed out in \cite{friedtwist, friedcohom} that
 for many dynamical applications it is best to work
with covering spaces on which all lifts of $f$ commute
with all deck transformations. Such a cover is of
necessity Abelian (Lemma 1 in \cite{friedtwist}).
 From \eqref{equilift} it follows
that for such a cover $f$ must act like the identity on
the deck group and that  the largest such cover corresponds to
the subgroup $F' := im(f_* - \id)\subset H_1(M)$. 

The deck group of  this cover,  the
quotient $H_1(M)/F'$, will frequently have torsion.
For most of our applications we will work only with
torsion-free part. Letting  
$F$ be the purification of $F'$, $F := P(F')$, we see
that the largest cover with
free deck group on which all lifts commute with deck has
deck group $H_1(M)/F$. 
It is easy to check that $f_*(F) = F$ and
so $f$ always lifts to this cover.
This leads to
\begin{definition}\label{friedcover}
Given a compact surface $M$ and \homeo\ $f:M\raw M$,
the Fried cover, $\tM_F$,  of $(f, M)$ is 
covering space corresponding to the subgroup of $H_1(M)$ given by
$F = P(\im(f_*-\id))$. 
\end{definition}

\begin{remark}\label{friednontriv}
  Note that $\Z^d/\im(f_* - \id)$ is finite
if and only if $\det(f_*-\id)\not= 0$ using the first paragraph of
\S\ref{subgroup}.
This determinant is not zero
if and only if $1$ is not an eigenvalue of $f_*$. Thus
the Fried cover deck group $\Z^d/P(\im(f_* - \id))$ is
nontrivial if and only if $1\in\spec(f_*)$. When $\Z^d/P(\im(f_* - \id))$
is trivial, by convention, the Fried cover is $M$ itself.
\end{remark}

\subsection{Rotation sets}\label{coverrot}
In this section we recall the generalized rotation vector
of orbits of a \homeo\ $f$. This
notion has its origins in Schwartzman's asymptotic cycles
and is now a common tool (see, for example, \S11 in  \cite{bdamster}
and the introduction of \cite{jenkinson}).
The definition requires a covering space which has
 torsion-free deck group and on which all lifts of $f$ 
commute with all deck transformation. Thus it is natural
to use the largest such cover, namely the Fried cover given in
Definition~\ref{friedcover}.
Assume that $1$ is 
an eigenvalue of $f_*$ and so the
deck group, $H_1(M)/F$,  of the Fried Cover
$\tM$ is nontrivial, \ie\ $H_1(M)/F =\Z^d$
for $d>0$.

The definition of the rotation vector requires a means of
measuring displacements in the Fried cover which is 
compatible with the deck action. 
If $H_1(M) = \Z^k$,  a standard construction yields a 
continuous map on the universal Abelian cover
 $\tbeta':\tM \raw \reals^k$ with 
$\tbeta' \circ\delta_{\vn}(\tx) = \tbeta'(\tx) + \vn$
for all $\vn\in\Z^k$. Here is one way to do the construction.
Pick a set of generators and 
a basis $\vn_1, \dots, \vn_k$ for $H_1(M)$ 
and a corresponding co-basis $c_1, \dots, c_k$ for 
$H^1(M;\Z)$, \ie\ $c_i(\vn_j) = 
\delta_{ij}$ (Kronecker delta).
Now  treat $c_i$ as an element of  $H^1_{DR}(M,\R)$ and assume
that it is represented by the closed
one-form $\omega_i$.
Lift  $\omega_i$ to $\tomega_i$  on $\tM$, and fix
a basepoint $\tz_0\in\tM$. The $i^{th}$ coordinate
of $\tbeta'$ is $\tbeta'_i(\tz) = \int_{\tgamma} \tomega_i$, where
$\tgamma$ is any smooth arc in $\tM$ from $\tz_0$ to
$\tz$. We then project $\tbeta'$ to obtain
an equivariant map $\tbeta:\tM_F \raw \R^d$.

Now for for a given lift 
$\tf_F$ of $f$ to the Fried cover $\tM_F$ and an $x\in M$,
pick a lift $\tx\in\tM_F$ of $x$  and $n\in\Z$ and let 
\begin{equation}\label{liftcocycle}
B(x,n) = \tbeta(\tf_F^n(\tx)) - \tbeta(\tx).
\end{equation}
Since $\tf_F$ commutes with the deck group of
$\tM_F$ and $\tbeta$ is equivariant, this definition is independent
of the choice of $\tx$, but it does depend on the choice of 
$\tf$. 
It is immediate that $B$ is an additive
dynamic cocycle over $(M, f)$. We let $\rot(x,\tf_F)$ 
be the element of $\R^d$, 
\begin{equation*}
\rot(x,\tf_F ) := \lim_{n\raw\infty} \frac{B(x, n)}{n}
\end{equation*}
when the limit exists, 
and define the rotation set in $\R^d$ as
\begin{equation*}
\rot(\tf_F) = \{ \rot(x,\tf_F) : x\in M\}.
\end{equation*}
It is immediate that 
\begin{equation}\label{liftpower}
\rot(\delta_{\vn}\circ \tf_F^q) = q\rot(\tf_F) + \vn,
\end{equation}
for all $q\in\Z$ and $\vn\in\Z^d$.
Since $B(x, 1)$ is continuous on $M$, it is bounded and
thus is in $L^1$ of any $f$-invariant probability measure.
This implies by the point-wise ergodic theorem that
the rotation vector exists almost everywhere with respect to
such measures.

Given a \homeo\ $f$ of $M$ and a lift $\tf$ to the universal
Abelian cover $\tM$, there is a unique lift $\tf_F$ of $f$
to the Fried cover which is  the projection of $\tf$. Define
\begin{equation}\label{friedrot}
\rot_F(\tf) = \rot(\tf_F).
\end{equation}
By convention if the Fried cover is trivial
we let $\rot_F(f)$ be the empty set.

\section{Twisted Skew Products}
Twisted skew products over a subshifts of finite type
provide a symbolic model for  the lifts of rel \pA\ maps
to Abelian covers. 
For the inductive arguments based on Lemma~\ref{MIL} 
we also need to consider
a countable state Markov chain as the  base shift.

\begin{definition}\label{skewdef}
A twisted skew product is constructed from:
\begin{compactenum}

\item a countable state Markov shift $(\Sigma, \sigma)$ called the base
shift,
\item a finitely generated Abelian group $G$ called the group component,
\item a function $h:\Sigma\raw G$ called the height function which is 
required to be constant on central cylinder sets of length two,
 and so $h(\us) = h(s_0,s_1)$,
\item and an isomorphism $\Psi:G\raw G$ called the twisting automorphism.
\end{compactenum}
The twisted skew product built from these ingredients
is  the  map $\tau:\Sigma\times G\raw \Sigma\times G$
given by
\begin{equation}\label{skeweq}
\tau(\us, g) = \left(\sigma(\us), \Psi(g) + h(\us)\right).
\end{equation}

In the special case when the twisting
isomorphism  $\Psi = \id$, the map $\tau$ is called
a skew product or, for emphasis, an untwisted
skew product. The  group component most 
commonly considered here is $G = \Z^d$ and in this
case the twisting
automorphism $\Psi$ is given by a twisting matrix $A\in\SL(d,\Z)$.
\end{definition}

\begin{remark} Condition (c) in the definition of a twisted
skew product was adopted so that the skew could be easily identified
with a countable Markov shift as in \S\ref{conn}.
In the somewhat more general case when $h$ depends on length $(\ell+1)$-blocks,
one can pass to the $\ell$-block presentation of $(\Sigma, \sigma)$ 
(see page 27 in \cite{kitchens}), 
and the corresponding height function will depend on length-two central
blocks.  Since $G$ is a discrete group, when $h$ is continuous
on a subshift of finite type, there
will always be an $\ell$ with $h$ constant on central length
$\ell$ cylinder sets.
\end{remark}

Skew products as 
just defined are also  called  
 $G$-extension of the countable state Markov shift $(\Sigma, \sigma)$.
 The height function $h$ is also commonly called the 
 \de{cocycle} since in  the untwisted case
it generates an additive cocycle over
$(\Sigma, \sigma)$ as in \S\ref{height} below. We will sometimes
call a skew product  a twisted or untwisted extension of the
 base shift as is appropriate.

The iterate of a twisted skew product is
itself a twisted skew product. Specifically, 
if $\tau$ is as in \eqref{skeweq}, then
\begin{equation}\label{skewiteq}
\tau^k(\us, g) = \left(\sigma^k(\us), \Psi^k(g) + \hh^{(k)}(\us)\right),
\end{equation}
where $\hh^{(k)}(\us) = \Psi^{k-1}(h(\us)) +  \Psi^{k-2}(h(\sigma(\us))) 
+ \dots + \Psi(h(\sigma^{k-2}(\us))) + h(\sigma^{k-1}(\us))$.
In particular,
the twisting automorphism of $\tau^k$ is
$\Psi^k$.

Note that $G$ acts  by addition on the second component of
$\Sigma\times G$. For a $g\in G$, we write
this action of $g$ as 
\begin{equation}\label{Taction}
T_g(\us, g') = (\us, g + g').
\end{equation}
A simple calculations shows that 
\begin{equation}\label{equi}
\tau\circ T_g = T_{\Psi(g)} \circ \tau.
\end{equation}
which is the obvious analog of \eqref{equilift}.
Thus $\tau$ commutes with the action of $G$
if and only if $\Psi = \id$, \ie\ when $\tau$ is an untwisted
skew product.

\subsection{Quotients, the finite lifting property
 and the Fried quotient}\label{quotientsect}
Many of the standard constructions for covering spaces
with deck group $G$ such as quotients and
subcovers have analogs for twisted skew products
with group component $G$.
Given a twisted skew product  $\tau$ as in
\eqref{skewdef}, if $\Gamma\subset G$ is a 
subgroup with $\Psi(\Gamma)= \Gamma$,
then $\tau$ descends to a twisted skew product
$\tau_\Gamma:\Sigma \times (G/\Gamma)\raw \Sigma \times (G/\Gamma)$,
defined by
 \begin{equation}\label{quotientdef}
\tau_\Gamma (\us, g + \Gamma) = 
\left(\sigma(\us), \Psi(g) + h(\us) + \Gamma\right).
\end{equation}
Further, the projection $\pi: G \raw G/\Gamma$ induces
a semiconjugacy, $\id \times \pi$, from $\tau$ to $\tau_\Gamma$. 
Thus, in particular, if $\tau$ is transitive, so
is any quotient $\tau_\Gamma$.

More generally,
if $\Gamma_1\subset \Gamma_2 \subset G$ are two $\Psi$-invariant
subgroups, then it is easy to check that
\begin{equation*}
\begin{CD}
0 @>>> \Gamma_2/\Gamma_1 @>>> G/\Gamma_1 @>>>G/\Gamma_2 @>>> 0\\
@. @VVV @VVV @VVV @.\\
0 @>>> \Gamma_2/\Gamma_1 @>>> G/\Gamma_1 @>>>G/\Gamma_2 @>>> 0%
\end{CD}
\end{equation*}
commutes, where the vertical maps are the natural ones
induced by $\Psi$. Thus the induced skew product on 
$\Sigma\times (G/\Gamma_1)$  is semiconjugate
to the one on $\Sigma\times (G/\Gamma_2)$, and further, the Noether isomorphism
$(G/\Gamma_1)/(\Gamma_2/\Gamma_1) \cong G/\Gamma_2$ induces
a conjugacy of the twisted skew products on 
$\Sigma\times (G/\Gamma_1)/(\Gamma_2/\Gamma_1)$ and
$\Sigma\times (G/\Gamma_2)$.

 Rel \pA\ \homeos\ have a  very useful property which
was pointed out and used by Fried (\cite{friedcomm}). These maps 
are transitive and their lift to any 
covering space with finite deck group
is also transitive. We now
define the analogous property for a twisted skew
product.
Restrict now to the case where group 
component of the skew product is a free Abelian
group $\Z^d$. 
\begin{definition}\label{ftpdef}
The skew product $\tau:\Sigma\times\Z^d\raw \Sigma\times\Z^d$
with twisting automorphism, $\Psi$, 
is said to have the finite transitivity property (ftp) if
for every $\Psi$-invariant, co-finite subgroup $\Gamma\subset \Z^d$ 
 the quotient map
$\tau_\Gamma:\Sigma\times (\Z^d/\Gamma) \raw \Sigma\times (\Z^d/\Gamma)$ 
 is transitive.
\end{definition}

Note that if $\tau$ has the ftp then so does any quotient.
This follows from  the conjugacy induced by the Noether
isomorphism  given above.
Note also that the base-shift
is itself a quotient of of $\tau$ and so when $\tau$ has the ftp,
its base shift is always transitive.

Using \eqref{equi} and \eqref{equilift}, 
untwisted skew products correspond
to covering spaces on which all lifts commute with the
deck group. As with covering spaces, it
is often useful to pass to a quotient on which
the projection is untwisted. 
The construction given here is the exact analog of that
for covering spaces given in  \S\ref{sect:friedcover}. 
Specifically, if $\tau$ is a twisted skew product with
group component $\Z^d$ and twisting isomorphism $\Psi$,
the largest untwisted quotient corresponds to the 
$\Psi$-invariant subgroup $F' = im(\Psi - \id)$, and
the largest untwisted quotient with torsion-free group component
corresponds to the purification $F = P( F')$.

\begin{definition}\label{friedskew}
If $\tau$ is a twisted skew product with
group component $\Z^d$ and twisting isomorphism $\Psi$,
the Fried quotient of $\tau$ is the quotient
$\tau_F$ for  $F = P( im(\Psi - \id))$.
\end{definition}

\subsection{Twisted skew products, 
countable state Markov shifts, and lifted transitions}\label{conn}
A twisted skew product can be identified with, or more precisely 
is conjugate to, a countable state Markov shift in a 
natural way. Given $\tau$ as in \eqref{skewdef}, assume that  
the states of the base shift $\Sigma$ are $ S = \{ 1, 2, 3, \dots \}$.
Define the states of a new countable Markov
shift as  $\hS = S \times \Z^d$, and the allowable one-step transitions
 for the new shift $\hS$ are 
$(a, \vn) \raw (b, \vm)$, where $a\raw b$ is allowable for $\Sigma$ and
$\vm = \Psi(\vn) + h(a, b)$.  Now let $\hat{\Sigma}\subset \hS^\Z$
be the collection of sequences from  $\hS^\Z$ all of whose transitions 
are allowable and  $\hat{\sigma}$ be the left shift on $\hat{\Sigma}$. The
conjugacy between $(\hat{\Sigma},\hat{\sigma})$ and
$(\Sigma\times\Z^d, \tau)$ is given by
$(\dots, (s_{-1}, \vn_{-1}), (s_{0}, \vn_{0}), 
(s_{1}, \vn_{1}), \dots ) \mapsto (\us, \vn_0)$.
We will often identify a skew product and its corresponding
countable state Markov shift with little  further mention.

Given an allowable one-step 
transition $a\raw b$ for $(\Sigma,\sigma)$,
the  \de{lifted or induced} transition for $\tau$ starting at $g\in G$
is \begin{equation}\label{lifted}
(a, g) \raw (b, \Psi(g) + h(a,b)).
\end{equation}
More generally, if $a\raw c$ is an allowable  $n$-step transition,
its lifted transition for $\tau$ is constructed by concatenating
the lifts of each one-step transition.

After identifying a skew product with a 
countable state Markov shift  the criterion
for transitivity given in \S\ref{Markovdef} can be used. Thus
$\tau$ is transitive if and only if for any states
$(a, \vn)$ and $(b, \vm)$, there is an allowable transition
$(a, \vn) \raw (b, \vm)$. Equivalently, $\tau$ is transitive
 if and only if for any
$(a, \vn)$ and $(b, \vm)$, there is 
$\us\in\Sigma$ and an $n>0$ so that
$s_0=a$ and $ s_n = b$ and
$\tau^n(\us, \vn) = (\sigma^n(\us), \vm)$.

\section{Untwisted Skew Products}
In this section we review some of the standard constructions 
associated with untwisted skew products.

\subsection{Lifting untwisted transitions and transitivity}
For untwisted skew products lifted transitions transform
nicely under the action of $G$ from \eqref{Taction}.
 In this case,
\begin{equation}\label{additive}
 (a_1, g_1) \raw (a_2, g_2)\ \text{implies}\ 
(a_1, g_1 + g) \raw (a_2, g_2 + g),
\end{equation}  for all $g\in G$.
This implies, in particular, that if 
$(\us, 0)$ is periodic point for $\tau$, then for all $g\in G$,
$(\us, g)$ is also. In addition,  
\begin{equation}\label{skewadd}
 (a_1, g_1) \raw (a_2, g_2)\ \text{and}\  (a_2, g_3) \raw (a_3, g_4)
\ \text{implies} \ 
(a_1, g_1) \raw (a_3, g_4 + g_2 - g_3)
\end{equation} 
 If $\tau$ is
untwisted, we thus have that
 $\tau$ is transitive if and only
if for any $j, k\in S$ and $\vm\in\Z^d$, 
there is an allowable transition
$(j, 0) \raw (k, \vm)$.

\subsection{The height cocycle and the displacement set}\label{height}
A  main tool in the study of untwisted skew products
is the cocycle giving the total height or displacement
of an orbit in the group component. This object has
many names and notations, including the Fr\"obenious element and
the total displacement. In the covering space context it gives
a coordinate for the Abelian Nielsen class of a periodic point
(see \cite{bdamster}) or
the twisted Lefschetz coefficient (see \cite{friedtwist}).

Given untwisted skew product $\tau$ with height function $h$, let
\begin{equation*}
h(\us, n) = h(\us) + h(\sigma(\us)) + \dots + h(\sigma^{n-1}(\us)),
\end{equation*}
and so for any $g\in G$,
\begin{equation*}
\tau^n(\us, g) =(\sigma^n(\us), g + h(\us, n)).
\end{equation*}
Note that we are ``overloading'' the symbol $h$. Other common notations
for $h(\us, n)$ include 
$h^n(\us)$ and $h^{(n)}(\us)$.  The notation
we choose emphasizes the valuable and frequently used fact that for
an untwisted skew product the height function $h$ induces 
an additive cocycle over the base shift $(\Sigma, \sigma)$: 
\begin{equation*}
h(\us, n + m) = h(\us, n) + h(\sigma^n(\us), m).
\end{equation*}

\begin{remark}\label{diff}
It is worth noting here an important difference between
twisted and untwisted skew products. 
If $\eta$ is a twisted skew product,
then one can define an additive $G$-valued cocycle for $\eta$ itself by
$E((\us, g), n) = \pi_2(\tau^n(\us, g)) - g$ where
 $\pi_2$ is projection onto the group factor. However, this
will only descend to a cocycle on the base shift $\Sigma$ when the 
value of $E$ is independent of the element $g\in G$ and by virtue of
\eqref{equi}, this only happens when $\Psi = \id$, \ie\ when
$\eta$ is untwisted.  This basic fact is the reason that
twisted skew products present additional difficulties over
untwisted ones.
\end{remark}

For untwisted skew products
a  special role is played by the value of the cocycle
on periodic points of the base shift.
Given an untwisted skew product $\tau$, 
define the \de{displacement set} $D(\tau)$ as 
\begin{equation}\label{Ddef}
D(\tau) = \{ h(p, n) : p\in\Fix(\sigma^n), n > 0\}.
\end{equation}
 For future
reference we note that if $p\in\Fix(\sigma^n)$, then
$h(\sigma^k(p), n) = h(p,n)$ for all $k$.

In the situation described in \S\ref{quotientsect}
where  $\Gamma_1\subset \Gamma_2$ are subgroups
of $\Z^d$, then the natural
map $\pi:\Z^d/\Gamma_1 \raw \Z^d/\Gamma_2$ yields a simple
relationship between the displacement 
sets of  $\tau_{\Gamma_1}$ and $\tau_{\Gamma_2}$. 
Specifically, if the height functions of 
$\tau_{\Gamma_1}$ and $\tau_{\Gamma_2}$ are
$h_1$ and $h_2$, respectively, then
for a periodic point $p\in\Sigma$, we have 
$h_2(p,n) = \pi(h_1(p,n))$. It then follows that
\begin{gather}\label{displaceeq}
\genp{D(\tau_{\Gamma_2})}  = \pi(\genp{D(\tau_{\Gamma_1})})\notag  \\
\gen{D(\tau_{\Gamma_2})} = \pi(\gen{D(\tau_{\Gamma_1})})
= \gen{D(\tau_{\Gamma_1})}/(\gen{D(\tau_{\Gamma_1})}\cap \Gamma_2). 
\end{gather}

\subsection{The rotation set}
Given an  additive cocycles it is natural to compute the
asymptotic average values. In the case of the
displacement cocycle this average is the analog of
the rotation vector of a lifted \homeo\ and so for the
sake of uniform terminology we use that name here.

We continue to restrict to the case of 
an untwisted skew product $\tau$ and in addition we
require that the group factor be torsion-free, $\Z^d$.
If the height cocycle for $\tau$ is $h(\us, n)$,
for $\us\in\Sigma$ define
its \de{rotation vector} as the vector in $\R^d$ given by 
\begin{equation}\label{rotdef}
\rot(\us) = \lim_{n\raw\infty} \frac{h(\us,n)}{n},
\end{equation}
if the limit exist. For any invariant probability
measure $\mu$ on $(\Sigma, \sigma)$ if the
height function is integrable with respect to $\mu$
($h\in L^1(\mu)$), then by the point-wise ergodic
theorem the limit in \eqref{rotdef}
exists almost everywhere with respect to $\mu$.
The collection of all rotation vectors for $\tau$ is
called  the \de{rotation set} and is denoted
 \begin{equation*}
\rot(\tau) = \{ \rot(\us) : \us \in \Sigma \}. 
\end{equation*}
Note that for a $p\in\Fix(\sigma^k)$, $\rot(p) = h(p,k)/k$,
and for all $q>0$ and $\vp\in\Z^d$,
\begin{equation}\label{rotpower}
\rot(T_{\vp}\circ\tau^q) =  q \rot(\tau) + \vp,
\end{equation}
where $T_{\vp}$ is the action of $\vp$ given in \eqref{Taction}.

As noted in Remark~\ref{diff},  the height function does not descend
to a cocycle on the base shift when a skew product in nontrivially
twisted. Thus in the twisted case there  generally
is not a usable notion of rotation
vector. However,  valuable information on the
twisted skew product can be
obtained using the rotation set 
of the largest torsion-free quotient
on which $\tau$ descends to an untwisted skew product.
In analogy with Definition~\ref{friedrot} we have:
\begin{definition}\label{friedrotskew}
If $\tau$ is a twisted skew product, let 
 $\rot_{F}(\tau) = \rot(\tau_{F})$, where $\tau_{F}$ is the
Fried quotient of $\tau$ defined in  \S\ref{friedskew}.
When $\Z^d/F$ is the trivial group we adopt the convention
that  $\rot_{F}(\tau) = \emptyset$.
\end{definition}

\section{Symbolic Models for Lifted Pseudo-Anosov Maps}

\subsection{Pseudo-Anosov maps}\label{pA}
We briefly review a few properties of rel \pA\ maps of
relevance here. For more information see  \cite{flp, cb}.
A \de{pseudo-Anosov} \homeo\ $\phi$ 
of a surface $M$ is characterized by
the existence of a pair $(\cF^u,\mu^u), ( \cF^s, \mu^s)$ of transverse,
$\phi$-invariant measured foliations one expanding and
the other contracting.
A \homeo\ $\phi$ of a compact surface $M^2$ is called
\de{\pA\ relative to the finite set $A$}  if it has all the usual properties of
\pA\ maps but in addition, its invariant foliations
 have  one-prong
singularities on the set $A$. For brevity of terminology,
if $\phi$ is \pA\ relative to some finite (or empty) set, it is 
called \de{rel \pA}.

A rel \pA\ map always has a Markov partition containing 
a finite number of rectangles $\{ R_1, \dots, R_k\}$.
By subdividing the partition if necessary we may
assume that for each $i, j$, the intersection
$R_i \cap \phi(R_j)$ has at most one component. 
The transition matrix $C$ is a $k\times k$ matrix
defined by $C_{ij} = 1$ if $R_i \cap \phi(R_j)\not=\emptyset$
and $C_{ij} = 0$, otherwise. Let $\Sigma$ denote the subshift
of finite type constructed from the matrix $C$; 
this subshift is  always transitive (irreducible) and
topologically mixing. There
is a semiconjugacy $\alpha$ from $(\Sigma, \sigma)$ to
$(M^2,\phi)$ which is bounded  to one, 
is bijective on dense, $G_\delta$ sets, and the 
image of a cylinder set in $\Sigma$ is a topological
disk in $M$, and thus the \pA\ map  is
also transitive and topologically mixing.

\subsection{Twisted skew products corresponding to a lifted \pA}\label{pAinfo}
We now describe the construction of
a twisted skew product which is a symbolic model
for the lift of the \pA\ map $\phi$ to an Abelian cover. 
The process is quite standard, but we will need some details
of the construction below. In addition, the case when
$\phi$ is not isotopic to the identity doesn't seem
to have been described in the literature. 

Assume now that $\phi$ is a rel \pA\ map of the compact surface
surface $M$ with universal Abelian cover $\tM$ with Markov
partition, transition matrix $C$, subshift of finite type $(\Sigma,\sigma)$,
and semiconjugacy $\alpha$ as in \S\ref{pA}
A skew product corresponding to the lift $\tphi$ will
have base shift $(\Sigma,\sigma)$, group factor 
$H_1(M)=\Z^d$, and twisting automorphism $\phi_*$.
The height function $h$ measures how much 
a lifted rectangle moves in the cover and is defined
as follows.

Fix a fundamental domain $\tM_0'$ for $\tM$ and one lift $\tR_j$ of
each rectangle $R_j$  such that $\tR_j\cap\tM_0 \not=\emptyset$
for all $j$ and $\cup \tR_j $ is a connected set. Let
$\tM_0 = \cup \tR_j$. Note that $\tM_0$ is also a fundamental
domain for $\tM$.  Now
since $C$ is a $(0,1)$ matrix, if $a\raw b$ is an allowable 
one-step transition
for $\Sigma$,  we have 
\begin{equation}\label{thdef}
\tphi(\tR_a) \cap \delta_{\vn} (\tR_b) \not=\emptyset
\end{equation}
for exactly one $\vn\in\Z^d$. Define $h:\Sigma\raw\Z^d$ 
as constant on a length two cylinder set $[a, b]_0$ by 
$h(\us) = \vn$ for all $\us\in [a, b]_0$,
 where $\vn\in\Z^d$ is the unique deck element
for which \eqref{thdef} holds. 

Using \eqref{equilift}, changing the chosen fundamental domain from $\tM_0$ to
some $\delta_{\vm} (\tM_0)$  will change the height function $h$ by 
a constant to 
$h' = (\phi_*-I)(m) + h$. In addition, a given rel \pA\ map
is modeled by many (closely related) 
subshifts of finite type. Thus there are actually 
many skew products which correspond to $\tphi$. 



\subsection{The semiconjugacy}\label{semiconstruct}
We now construct a semiconjugacy $\talpha$ from
$(\Sigma\times\Z^d, \tau)$ onto $( \tM, \tphi)$.
 In making the construction it is initially
easier to work with the countable state Markov shift described
in \S\ref{conn} which is naturally conjugate to $\tau$.
Recall that this shift $(\hSigma, \hsigma)$
 has states $\hS = \{1, 2, \dots, k\} \times \Z^d$ and
so sequences $\hus\in \hSigma$ are given as
 $\hus = \dots, (s_{-1}, \vn_{-1}), 
(s_{0}, \vn_{0}), (s_{1}, \vn_{1}), \dots$. Define
 $\talpha:\hSigma\raw\tM$ by
\begin{equation}\label{talphadef}
\talpha(\hus) = \bigcap_{j\in\Z} \phi^{-j} (\delta_{\vn_j}(R_{s_j})).
\end{equation}
The basic properties of the Markov partition for $\phi$
coupled with \eqref{equilift} ensure that the intersection
in \eqref{talphadef} consists of exactly one point, 
$\talpha$ is continuous, onto, bounded to one, and
$\talpha \circ \hsigma = \tphi\circ \talpha$.
Further, $\talpha$ is  
is bijective on dense, $G_\delta$ sets, and the 
image of a cylinder set in $\hSigma$ is a topological
disk in $\tM$. Since the cylinder sets form
a basis for the topology of $\tM$, 
$\talpha$ is transitive  or topologically mixing if
and only if $\tphi$ has these properties.

As in \S\ref{conn} we can identify the Markov model for
$\tau$ with $\tau$ and so we can also consider the
semiconjugacy from $(\Sigma\times\Z^d,\tau)$ to
$(\tM,\tphi)$. We will also denote this semiconjugacy
as $\talpha$. With this identification the construction
of $\talpha$ gives that for each $\vn\in\Z^d$, 
\begin{equation}\label{domainimage}
\talpha(\Sigma\times\{\vn\}) = \delta_{\vn}(\tM_0).
\end{equation}

\subsection{Dynamical correspondence of skew product and lifted
\pA}\label{quasisect}
Using \S\ref{semiconstruct} and \S\ref{coverrot} we now have maps
\begin{equation}\label{maps}
\begin{CD}
\Sigma\times\Z^d@>\talpha>>\tM@>\tbeta>>\R^d.
\end{CD}
\end{equation}
An understanding of the metric properties of these
maps is necessary to compare the dynamics of $\tau$ and
$\tphi$ on these unbounded spaces. 
Recall that a map between pseudo-metric spaces
$q:(X,d)\raw (X', d')$ is called
a \de{quasi-isometry} if there are numbers $a>1$ and $b>0$ with
\begin{equation*}
\frac{1}{a}\; d(x_1, x_2) - b \leq d'(q(x_1), q(x_2)) 
\leq a \; d(x_1, x_2) + b
\end{equation*}
for all $x_1, x_2\in X$. A quasi-isometry yields a correspondence
of the metric structure on ``large scales''.

Define a pseudo-metric $d_1$ on $\Sigma\times\Z^d$ using the
norm on the group component, 
$d_1(\ut, \ut') = \| \pi_2(\ut)-\pi_2(\ut') \|$.
For a topological metric $d$ on the surface $M$, let 
$\td$ be its lift to the universal Abelian cover. As a consequence
of \eqref{domainimage}, $\talpha:(\Sigma\times\Z^d, d_1)\raw
(\tM, \td)$ is a quasi-isometry. 
In addition, the map $\tbeta':\tM\raw\R^d$ from \S\ref{coverrot}
is also a quasi-isometry from $(\tM, \td)$ to $\R^d$ with the
standard metric. 
Thus all the natural equivariant ways of measuring large
scale displacements on $\Sigma\times\Z^d$ and  $\tM$ are
all comparable. 

We also need to compare the corresponding linear structures
on the group factor of $\Sigma\times\Z^d$ and on the
vector space $\R^d$. Specifically, the construction
of $\talpha$ and $\tbeta$ yields for all  
$(\us, \vn)\in \Sigma\times\Z^d$,
\begin{equation}\label{linearcompare}
\left| \tbeta\circ\talpha(\us, \vn) - \vn \right| \leq\sqrt{d}
\end{equation}
(the $\sqrt{d}$ is the diameter of  the unit cube in $\R^d$).

Passing to Fried quotients, if $H_1(M)/F = \Z^k$, then 
 the maps in \eqref{maps} descend to 
\begin{equation*}
\begin{CD}
\Sigma\times\Z^k@>\talpha_F>>\tM_F@>\tbeta_F>>\R^k,
\end{CD}
\end{equation*}
with $\talpha_F$ a semiconjugacy from  $\tau_F$ 
to  $\tphi_F$.
Both $\talpha_F$ and $\tbeta_F$ are 
quasi-isometries of the projected pseudo-metrics and
the analog of \eqref{linearcompare} for $\tbeta_F\circ\talpha_F$ holds
as well. 

The next proposition summarizes the connections between
the lift of a rel \pA\ map $\tphi$ to the universal Abelian
cover $\tM$ and a corresponding twisted
skew product $\tau$.

\begin{proposition}\label{summary}
Assume that $\phi$ is a rel \pA\ map on the compact surface
$M$,  $\tM$ is the universal Abelian cover of $M$,
$\tphi$ is a lift of $\phi$ to $\tM$, and $\tau$ is a twisted
skew product with base shift $(\Sigma, \sigma)$ 
that corresponds to $\tphi$.  

\begin{compactenum}
\item $(\tM,\tphi)$ is transitive (topologically mixing) if and only if a 
$\tau$ is transitive (topologically mixing).
\item  $\tau$ has the ftp.
\item $\rot_F(\tphi) = \rot_F(\tau)$ and 
these sets have dimension $k  = \rank(\Z^d/F)$.
\end{compactenum}
\end{proposition}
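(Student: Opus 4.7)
The plan is to dispatch (a), (b), (c) in turn, leaning on the semiconjugacy $\talpha$ of \S\ref{semiconstruct} and the metric control from \S\ref{quasisect}. For (a), the map $\talpha:\Sigma\times\Z^d\to\tM$ is continuous, onto, boundedly many-to-one, bijective on a pair of dense $G_\delta$ sets, and carries the cylinder-set basis of $\Sigma\times\Z^d$ to open topological disks forming a basis for $\tM$. As already noted at the end of \S\ref{semiconstruct}, these properties let me transfer transitivity and topological mixing through $\talpha$ in both directions: a dense $\tau$-orbit projects to a dense $\tphi$-orbit, while a dense $\tphi$-orbit falling in the bijective $G_\delta$ lifts to a dense $\tau$-orbit.

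For (b), fix a $\phi_*$-invariant cofinite subgroup $\Gamma\subset\Z^d$. Then $\tM_\Gamma$ is a finite cover of $M$, and the lift of $\phi$ to it is again a rel \pA\ \homeo\ of a compact surface because the invariant foliations and the Markov partition pull back. Fried's observation recalled just above Definition~\ref{ftpdef}---that rel \pA\ maps remain transitive on every finite cover---gives transitivity of this lift. The Markov construction of \S\ref{pAinfo} descends to a semiconjugacy $\talpha_\Gamma:\Sigma\times(\Z^d/\Gamma)\to\tM_\Gamma$ between $\tau_\Gamma$ and the lift, retaining the structural properties used in (a), so applying (a) at this level shows $\tau_\Gamma$ is transitive.

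For (c), the equality $\rot_F(\tphi)=\rot_F(\tau)$ follows from \eqref{linearcompare}. Given $\us\in\Sigma$, set $\tx=\talpha_F(\us,0)$ and let $x=\alpha(\us)$ be its projection to $M$. The intertwining $\talpha_F\circ\tau_F=\tphi_F\circ\talpha_F$ together with \eqref{skewiteq} gives
\begin{equation*}
B(x,n) \,=\, \tbeta_F\bigl(\talpha_F(\sigma^n\us,h_F(\us,n))\bigr) - \tbeta_F\bigl(\talpha_F(\us,0)\bigr),
\end{equation*}
which by \eqref{linearcompare} differs from $h_F(\us,n)$ by at most $2\sqrt{k}$, so dividing by $n$ yields $\rot(x,\tphi_F)=\rot(\us)$ whenever either side exists. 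Surjectivity of $\alpha$ and deck-invariance of the rotation vector then give equality of the rotation sets.

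The dimension claim is the step I expect to require the most care, and I will obtain it by showing $D(\tau_F)=\Z^k$. Since $\tau_F$ is untwisted and, by (a) applied at the Fried level, transitive, for any fixed state $a$ of the base shift and any $\vm\in\Z^k$ there is an allowable block $s_0\dots s_n$ with $s_0=s_n=a$ and total height $\vm$; closing this block into a periodic point of $(\Sigma,\sigma)$ realizes $\vm\in D(\tau_F)$. Taking $\vm=0$ and $\vm=e_i$ for $i=1,\dots,k$ produces $k+1$ periodic rotation vectors in $\rot_F(\tau)$ that are affinely independent in $\R^k$, so $\rot_F(\tau)$---and by the equality just established also $\rot_F(\tphi)$---has affine dimension $k$.
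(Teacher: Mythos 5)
Your treatment of (a) and (b) matches the paper's argument, and the first half of (c) (equality $\rot_F(\tphi)=\rot_F(\tau)$ via \eqref{linearcompare}) is also essentially the paper's proof. The problem is the dimension claim.

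You assert that $\tau_F$ is transitive, ``by (a) applied at the Fried level,'' and deduce $D(\tau_F)=\Z^k$. But (a) is an equivalence ($\tau_F$ transitive iff $\tphi_F$ transitive); it gives no information about whether either one actually is transitive. The Fried cover $\tM_F$ is an infinite cover when $k>0$, and the lifted map $\tphi_F$ need \emph{not} be transitive there --- indeed, determining exactly when such lifts are transitive is the whole content of Theorem~\ref{rotthm} and Theorem~\ref{pA1}. A concrete counterexample is mentioned near the end of the paper: the Cerbelli--Giona torus map has $\rot_F(\tpsi)=[0,1]$, so $0$ lies on the frontier rather than the interior of $\rot_F$, and the lift to the Fried cover (here the plane) is not transitive. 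Yet $\rot_F$ still has dimension $k=1$, which is what Proposition~\ref{summary}(c) asserts. Your route therefore proves dimension $k$ only under an extra hypothesis that Proposition~\ref{summary} does not make and which fails in exactly the cases that make Theorem~\ref{pA1} interesting.

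The paper's proof (Remark~\ref{pAtopdim}) avoids this by using only the ftp from part (b), not transitivity. Take any periodic point $\us$ of the base shift with $\rot(\us,\tau_F)=\vp/q$, translate and iterate to form $T_{-\vp}\,(\tau_F)^q$, which corresponds to a lift to $\tM_F$ of the rel \pA\ map $\phi^q$ and so still has the ftp; since $0$ is in its rotation set, Theorem~\ref{friedthm}(c) gives dimension $k$ for that rotation set, and \eqref{rotpower} carries this back to $\rot(\tau_F)$. The distinction that your argument misses is that the ftp (together with $0\in\rot$) forces the rotation set to be full-dimensional via Lemma~\ref{ftp1} and Theorem~\ref{friedthm}(b), without ever needing $0\in Int(\rot)$ or transitivity. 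Replacing your final paragraph with this translate-and-iterate argument would close the gap.
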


\begin{proof}
The proof of (a) was indicated in \S\ref{semiconstruct}.
To prove (b), note that if $\Gamma\subset H_1(M)$ is co-finite
 and $\phi_*(\Gamma) = \Gamma$,
then $\tphi$ on the universal Abelian cover descends
to a $\tphi_\Gamma$  on the quotient cover $\tM_\Gamma = \tM/\Gamma$.
The map $\tphi_\Gamma$ is a lift of $\phi$ to the compact
surface $\tM_\Gamma$ and thus is
 itself \pA\ and so $\tphi_\Gamma$ is transitive.
The semiconjugacy $\talpha$ from $(\Sigma\times\Z^d, \tau)$ 
to $(\tM,\tphi)$ descends to a semiconjugacy 
$\talpha_\Gamma$ from $(\Sigma\times(\Z^d/\Gamma), \tau_\Gamma)$ 
to $(\tM_\Gamma,\tphi_\Gamma)$ and so 
$\tau_\Gamma$ is transitive as remarked at the end of
\S\ref{pA}.

To prove (c) assume that the Fried quotient is nontrivial.
Let $h_F$ be the height function of the projection $\tau_F$ 
of $\tau$ to the Fried quotient and $B$ be the cocycle
defined by \eqref{liftcocycle} on the Fried cover of $\phi$ and
$\alpha:\Sigma\raw M$ be the semiconjugacy 
from the base shift to the map $\phi$ on the surface.
Now if $\alpha(\us) = x$, then by construction,
$\talpha(\us, 0) := \tx$ is a lift of $x$. Thus using
\eqref{liftcocycle} and the fact that $\talpha_F$ is a semiconjugacy,
\begin{equation*}
B(x, n) = \tbeta_F(\tphi_F^n(\talpha_F(\us, 0))) 
- \tbeta_F(\talpha_F(\us, 0))
= \tbeta_F\circ\talpha_F(\tau_F^n(\us, 0)) 
- \tbeta_F\circ\talpha_F(\us, 0)
\end{equation*}
Since by definition,
 $h_F(\us, n) = \pi_2(\tau_F^n(\us, 0)) -  \pi_2(\us, 0)$ and
so by the Fried quotient version of \eqref{linearcompare} 
we have that  for any $\us\in\Sigma$ and 
$n\in \N$,
\begin{equation}\label{cohom}
|h_F(\us, n) - B(\alpha(\us), n)| \leq 2 \sqrt{k}.
\end{equation}  

From \eqref{cohom} and the definitions
of the rotation vectors it then follows
directly that  for $\us\in\Sigma$,  $\rot(\us, \tau_F)$ exists
if and only if $\rot(\alpha(\us), \tphi_F)$ does, and if
they exist they are equal, and thus the first statement in (c) follows. 
The assertion regarding
the dimension will be proved in Remark~\ref{pAtopdim} below.
\QED\

\end{proof}


\section{Transitivity of Untwisted Skew Products}
The main goal of this section is to prove the
following  theorem which gives conditions for transitivity 
of an untwisted skew product using the
rotation set. It is ultimately based on the
criterion for transitivity using the displacement set
provided by Coudene's Theorem~\ref{coudene} below, however
the rotation set is more tractable under the iterations and translations of 
maps required here.

\begin{theorem}\label{rotthm}
Assume that $\tau$ is an untwisted skew product with 
group factor $G=\Z^d$ and
the base shift $(\Sigma, \sigma)$ a transitive subshift of 
finite type. The following are equivalent:
\begin{compactenum}
\item $\tau$ is transitive, 
\item $\tau$ has the ftp and $0\in Int(\rot(\tau))$, 
\item $\tau$ has the ftp and its periodic points are 
dense in $\Sigma\times\Z^d$.
\end{compactenum}
\end{theorem}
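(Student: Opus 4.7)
The plan is to prove the theorem via the four implications (a)$\Rightarrow$(b), (a)$\Rightarrow$(c), (b)$\Rightarrow$(a), and (c)$\Rightarrow$(a), with the easier directions handled by the countable state Markov shift identification of \S\ref{conn}, and the two harder directions appealing to Coudene's Theorem~\ref{coudene}. Coudene's criterion characterizes transitivity of an untwisted $\Z^d$-extension through the displacement set $D(\tau)$; the remark in the theorem's prose makes clear that the rotation set is serving as a substitute that behaves better under the iterations and translations used later in the paper.

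For (a)$\Rightarrow$(b) and (a)$\Rightarrow$(c), I would identify $\tau$ with the countable state Markov shift on states $S\times\Z^d$ from \S\ref{conn}. Transitivity of $\tau$ is irreducibility of this graph, which yields simultaneously: every state lies on a cycle, so $\tau$-periodic points are dense; the continuous semiconjugacies $\tau\to\tau_\Gamma$ push transitivity down to each co-finite quotient, giving the ftp; and for every $\vn\in\Z^d$ there is a loop with total displacement $\vn$, so $D(\tau)=\Z^d$ and the ratios $\vn/k$ realize periodic rotation vectors dense in $\R^d$, giving $0\in Int(\rot(\tau))$.

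The main step is (b)$\Rightarrow$(a). Since $\rot(\tau)\subset\R\cdot\gen{D(\tau)}$, the interior condition forces $\gen{D(\tau)}$ to have rank $d$, hence be co-finite in $\Z^d$. Because $\Psi=\id$, this subgroup is automatically $\Psi$-invariant, so the ftp applied to $\Gamma:=\gen{D(\tau)}$ gives $\tau_\Gamma$ transitive, while by construction $D(\tau_\Gamma)=\pi_\Gamma(D(\tau))=\{0\}$, which is incompatible with transitivity unless the finite fiber is trivial. Hence $\gen{D(\tau)}=\Z^d$. To upgrade to the positive-semigroup generation $\genp{D(\tau)}=\Z^d$, which I expect to be the form in which Coudene's hypothesis is needed, I would use the standard specification property of transitive SFTs in the $\Z^d$-cocycle setting to show periodic rotation vectors are dense in $\rot(\tau)$, and then extract finitely many periodic displacements $\vec d_1,\dots,\vec d_{d+1}\in D(\tau)$ whose convex hull contains $0$ in its interior. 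Then $0=\sum\lambda_i\vec d_i$ with positive rational $\lambda_i$; writing any $\vec v\in\Z^d$ as $\vec v=\sum a_i\vec d_i$ with $a_i\in\Z$ and adding a large common multiple of the relation $\sum\lambda_i\vec d_i=0$ makes all coefficients positive integers, giving $\genp{D(\tau)}=\Z^d$, at which point Coudene's theorem delivers transitivity. For (c)$\Rightarrow$(a) the argument is analogous: density of $\tau$-periodic points provides a dense supply of zero-drift periodic base loops, and combining these with the residue-class loops provided by ftp for various co-finite $\Gamma$ verifies the same Coudene hypothesis by concatenation.

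The main obstacle will be the specification-type step used in (b)$\Rightarrow$(a): translating the analytic interior condition on $\rot(\tau)$ into an actual finite collection of periodic orbits whose displacements surround $0$ in $\R^d$. This is routine for transitive subshifts of finite type, but must be set up carefully in the $\Z^d$-cocycle context so that the approximating periodic rotation vectors live in the original base shift $(\Sigma,\sigma)$ and so that the passage from $0\in Int(\rot(\tau))$ to a positive-rational convex combination is executed with integer coefficients compatible with the Coudene criterion.
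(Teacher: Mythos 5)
Your overall framework matches the paper's: both route transitivity through Coudene's Theorem~\ref{coudene} and use the convex-hull/polytope structure of the rotation set over a transitive SFT. Two of your four implications, however, contain gaps that need repair.

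In (b)$\Rightarrow$(a), after correctly deducing $\gen{D(\tau)}=\Z^d$, you propose to ``extract finitely many periodic displacements $\vec d_1,\dots,\vec d_{d+1}\in D(\tau)$ whose convex hull contains $0$ in its interior'' and then write an arbitrary $\vec v\in\Z^d$ as $\vec v=\sum a_i\vec d_i$ with $a_i\in\Z$. That last step silently assumes $\gen{\vec d_1,\dots,\vec d_{d+1}}=\Z^d$, which you have not arranged: having $0$ in the interior of the convex hull of $d+1$ lattice points imposes no constraint on the index of the lattice they generate (already in $d=1$, the pair $\{-2,2\}$ surrounds $0$ but generates $2\Z$). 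The fact that the ftp gives $\gen{D(\tau)}=\Z^d$ is a statement about \emph{all} of $D(\tau)$, not about a minimal simplex of it. The fix is precisely what the paper does in \S\ref{oldproof}: use the \emph{entire} finite set of simple periodic displacements (which generates $\Z^d$ by Lemma~\ref{ftp1} together with \eqref{simpledis}), note by Theorem~\ref{friedthm}(a) that $0$ lies interior to the convex hull of the corresponding rotation vectors, and then invoke the polytope fact to produce positive integer coefficients for \emph{every} one of them; this feeds directly into the criterion Theorem~\ref{ftp2}(d). Alternatively you could augment your $d+1$ displacements with a generating subset of $D(\tau)$ — since $0$ remains interior to the enlarged convex hull, positive rational coefficients still exist for all points.

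The (c)$\Rightarrow$(a) sketch is the more serious problem. You observe, correctly, that density of $\tau$-periodic points yields a dense supply of zero-drift base loops, and that the ftp yields representatives in $\genp{D(\tau)}$ of every residue class modulo every co-finite $\Gamma$. But concatenating a zero-drift loop onto anything adds $0\in\Z^d$, and ``hitting every residue class'' is compatible with $\genp{D(\tau)}$ being, say, a translated half-lattice (example $\eta_1$ in \S\ref{examples} already shows $\genp{D}=\{1,2,\dots\}$ surjecting onto every $\Z/K$ without being $\Z$). So ``verifies the same Coudene hypothesis by concatenation'' does not follow as written. The paper avoids proving (c)$\Rightarrow$(a) directly and instead shows (c)$\Rightarrow$(b): from the ftp, Theorem~\ref{friedthm}(c) gives $\dim\rot(\tau)=d$, and if $0$ were on the frontier one takes a supporting functional $L$, finds a periodic orbit of $\tau$ whose initial segment pushes $L$-values above the threshold $2C$ of Lemma~\ref{boundlem}, and uses the fact that a $\tau$-periodic orbit must \emph{return} to group-coordinate $0$ to locate an excursion with $L$-value below $-C$, contradicting the support inequality. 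You should route (c) through (b) in this way rather than attempt a direct concatenation argument.
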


\subsection{Coudene's transitivity theorem}
The next theorem gives conditions on the displacement set which ensure a
untwisted skew product is transitive. 
While the theorem given here is
stated in more general terms than Theorem 9 in 
\cite{coudene},  the method of proof indicated there
works for the version given here. 
 We shall need the generalization to 
countable state Markov shifts in the main induction argument below.
In covering space language the analog of this theorem
 was proved in \cite{bgh} for the case of $G=\Z$ and in
 \cite{parwanithesis} for $G = \Z^2$.

\begin{theorem}[Coudene]\label{coudene}
 Let $\tau:\Sigma\times G \raw \Sigma\times G$ be an untwisted skew
product with base shift $\Sigma$ a transitive countable  Markov shift
 and group component $G$ a finitely
generated Abelian group. The untwisted skew product $\tau$ is transitive
if and only if $\genp{D(\tau)} = G$.
\end{theorem}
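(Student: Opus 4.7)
The forward direction is straightforward: if $\tau$ is transitive, then for any $g\in G$ and any state $a$, transitivity provides an allowable transition $(a,0)\raw(a,g)$, which by \S\ref{conn} corresponds to a finite $\Sigma$-word $s_0=a,s_1,\dots,s_n=a$ with $h$-displacement $g$. Extending this word periodically produces a periodic point $p$ of the base shift with $h(p,n)=g$, so $g\in D(\tau)$, and hence $\genp{D(\tau)}=G$.

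For the converse, by \eqref{additive} it suffices to produce, for any pair of states $(a,g_1)$ and $(b,g_2)$, a transition $(a,0)\raw(b,g)$ with $g:=g_2-g_1$. The first step is to extract a \emph{finite} subset $F=\{d_1,\dots,d_k\}\subset D(\tau)$ with $\genp{F}=G$. Since $G$ is finitely generated Abelian, a finite list of group generators together with their negatives semigroup-generates $G$; each such element lies in $\genp{D(\tau)}$ by hypothesis and is therefore a finite positive combination of elements of $D(\tau)$, and we take $F$ to be the union of the elements appearing in these combinations. For each $d_i$ fix a periodic point $p_i\in\Fix(\sigma^{m_i})$ with $h(p_i,m_i)=d_i$, and a state $c_i$ on the $\sigma$-orbit of $p_i$.

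Using transitivity of the base, choose $\Sigma$-words $\gamma_0:a\raw c_1$, $\gamma_i:c_i\raw c_{i+1}$ for $1\leq i\leq k-1$, and $\gamma_k:c_k\raw b$; let $E$ be the total $h$-displacement of the concatenation $\gamma_0\gamma_1\cdots\gamma_k$. Using $\genp{F}=G$, write $g-E=\sum_{i=1}^k \ell_i d_i$ with non-negative integers $\ell_i$. Splicing in $\ell_i$ loops around each $p_i$-orbit starting at $c_i$ yields the $\Sigma$-word
\[
\gamma_0\cdot(p_1\text{-loop},\ell_1\text{ times})\cdot\gamma_1\cdot(p_2\text{-loop},\ell_2\text{ times})\cdots(p_k\text{-loop},\ell_k\text{ times})\cdot\gamma_k,
\]
which is an allowable transition from $a$ to $b$ in $\Sigma$ of total $h$-displacement $E+\sum_i\ell_i d_i = g$; this realizes $(a,0)\raw(b,g)$ in $\tau$ as required.

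The main obstacle is decoupling the template displacement $E$ from the loop contributions: the transition words $\gamma_i$ are forced on us by the need to stitch the orbits of $p_1,\dots,p_k$ together, so $E$ cannot be chosen freely. The construction sidesteps this by fixing the template first and absorbing $E$ into the coefficients $\ell_i$, which is possible precisely because $F$ semigroup-generates the entirety of $G$ (not merely generates it as a group); this semigroup-generation property is where finite generation of $G$ is used essentially.
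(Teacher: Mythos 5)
The paper does not prove this theorem; it cites Coudene's result (Theorem 9 of the cited paper) and remarks that the same method extends from the SFT case to countable Markov bases, so there is no paper proof to compare against. That said, your argument is correct and complete. The reduction via \eqref{additive} to realizing $(a,0)\raw(b,g)$ for arbitrary $g$ is right, and the forward direction in fact establishes the stronger fact $D(\tau)=G$. Two small points worth making explicit: (i) when you repeat the block $s_0\dots s_n$ with $s_0=s_n=a$ to form a periodic point, the required transition is $s_{n-1}\raw s_0$, which is allowable because $s_{n-1}\raw s_n$ is and $s_n=s_0$; and (ii) the representation $g-E=\sum\ell_i d_i$ with $\ell_i\geq 0$ needs the convention that the all-zero choice handles $g-E=0$, and otherwise collapsing a positive combination over $F$ by grouping equal summands gives non-negative $\ell_i$. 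Neither of these is a gap, just bookkeeping. Finally, your construction uses only irreducibility of the base and finite generation of $G$, never compactness of $\Sigma$, so it indeed covers the countable-Markov version the paper needs for its inductive argument in Lemma~\ref{MIL}.
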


\begin{remark}
The analog of Theorem~\ref{coudene} for lifted measures
is quite different in character. Let 
$\tau$ be an untwisted skew product over the subshift of
finite type  $(\Sigma, \sigma)$ with group component
$\Z^d$ and height function $h$ and 
$\mu$ be an ergodic, shift-invariant Gibbs measure with
 $\int h \;d\mu = 0$. 
Rees and then Guivarc'h 
 showed that the lift of $\mu$ to a $\tau$-invariant
(infinite) measure is ergodic if and only if $d = 1$ or $2$
(\cite{rees, guiv}).
\end{remark}

\subsection{The finite lifting property and transitivity}
For untwisted skew products we first obtain various conditions
equivalent to the ftp 
using the displacement set $D(\tau)$. Note that 
for untwisted
skew products by definition the twisting isomorphism $\Psi = Id$ and
 so the quotient
$\tau_\Gamma$ is defined for all subgroups $\Gamma\subset G$.

\begin{lemma}\label{ftp1} 
Assume that $\tau$ is an untwisted skew product with 
group factor $G=\Z^d$ and
base shift $(\Sigma, \sigma)$ a transitive countable
Markov shift. The following are equivalent:
\begin{compactenum}
\item $\tau$ has the ftp,
\item $\gen{D(\tau)} = \Z^d$,
\item For every co-finite $\Gamma$, 
$ \genp{D(\tau_\Gamma)} = \gen{D(\tau_\Gamma)} = \Z^d/\Gamma$.
\end{compactenum}
\end{lemma}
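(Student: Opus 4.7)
The plan is to chain three biconditionals, with essentially all the real work being done by Coudene's Theorem~\ref{coudene} and the Smith normal form stated at the start of \S\ref{subgroup}.

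First I would observe that since $\tau$ is untwisted, $\Psi = \id$, so every subgroup $\Gamma \subset \Z^d$ is $\Psi$-invariant and every quotient $\tau_\Gamma$ is a well-defined untwisted skew product. When $\Gamma$ is co-finite, the group component $\Z^d/\Gamma$ is a finite Abelian group, so the identity $\genp{X} = \gen{X}$ noted in \S\ref{subgroup} makes the two displacement conditions in (c) equivalent for each such $\Gamma$. Applying Coudene's Theorem~\ref{coudene} to $\tau_\Gamma$ over the transitive countable Markov shift $(\Sigma,\sigma)$, transitivity of $\tau_\Gamma$ is equivalent to $\genp{D(\tau_\Gamma)} = \Z^d/\Gamma$. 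Quantifying over all co-finite $\Gamma$ gives (a) $\iff$ (c).

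Next, for (b) $\iff$ (c), I would apply \eqref{displaceeq} with $\Gamma_1 = \{0\}$ and $\Gamma_2 = \Gamma$, obtaining $\gen{D(\tau_\Gamma)} = \pi(\gen{D(\tau)})$, where $\pi:\Z^d \to \Z^d/\Gamma$ is the projection. Hence (c) is equivalent to the statement that $\gen{D(\tau)} + \Gamma = \Z^d$ for every co-finite subgroup $\Gamma$. The direction (b) $\Rightarrow$ (c) is immediate, since $\gen{D(\tau)} = \Z^d$ forces $\gen{D(\tau)} + \Gamma = \Z^d$ for any $\Gamma$.

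The remaining direction (c) $\Rightarrow$ (b) is where the one non-trivial (but still modest) obstacle sits: I need to rule out proper subgroups $H := \gen{D(\tau)} \subsetneq \Z^d$ that nevertheless surject onto $\Z^d/\Gamma$ for every co-finite $\Gamma$. Assuming $H \subsetneq \Z^d$, I would produce a co-finite $\Gamma$ with $H \subset \Gamma \subsetneq \Z^d$, which forces $\gen{D(\tau)} + \Gamma = \Gamma \neq \Z^d$ and contradicts (c). If $\rank H = d$, then $H$ itself is co-finite and I would take $\Gamma = H$. If $\rank H = k < d$, I would invoke the Smith normal form (the fundamental fact in \S\ref{subgroup}) to produce a basis $u_1,\dots,u_d$ of $\Z^d$ with $H \subset \gen{u_1,\dots,u_k}$, and then set $\Gamma := \gen{u_1,\dots,u_k,\, 2u_{k+1},\, u_{k+2},\dots,u_d}$; this subgroup has index $2$ in $\Z^d$, contains $H$, and is strictly smaller than $\Z^d$, completing the argument.
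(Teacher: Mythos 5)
Your proof is correct and follows essentially the same route as the paper: Coudene's theorem plus the finite-group identity $\genp{X}=\gen{X}$ gives (a) $\iff$ (c), and the Smith normal form supplies the co-finite proper $\Gamma$ containing $\gen{D(\tau)}$ needed to rule out $\gen{D(\tau)}\subsetneq\Z^d$. The only cosmetic difference is that the paper packages the last step in the language of purity (a co-finite non-pure $H$), whereas you split explicitly on $\rank\gen{D(\tau)}$; your reformulation of (c) as $\gen{D(\tau)}+\Gamma=\Z^d$ for all co-finite $\Gamma$ is a clean equivalent of \eqref{displaceeq}.
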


\begin{proof}
Since the group factor of $\tau_\Gamma$ is
$\Z^d/\Gamma$, Theorem~\ref{coudene} says that $\tau_\Gamma$ is transitive
if and only if $\genp{D(\tau_\Gamma)} = \Z^d/\Gamma$.
If $\Gamma$ is co-finite, as remarked in \S\ref{subgroup}, 
$\genp{D(\tau_\Gamma)} = \gen{D(\tau_\Gamma)}$ 
in the finite group $\Z^d/\Gamma$. 
Thus we have the equivalence of (a) and (c).
Now assume (b), as noted in \eqref{displaceeq}, $\gen{D(\tau_\Gamma)} = 
\gen{D(\tau)}/ (\gen{D(\tau)}\cap\Gamma) = \Z^d/\Gamma$,
and so (c) follows.

Now we prove not (b) implies not (c). If (b) is false, then
$\hat{H} := \gen{D(\tau)}$ is a proper subgroup of $\Z^d$. 
It follows easily from the fundamental fact \eqref{fundfact} 
that there is always a co-finite $H$ with $\hat{H}\subset H$ 
and $H$ is \textit{not} pure, and thus $|\Z^d/H| > 1$. 
Again using 
\eqref{displaceeq} we have,
$\gen{D(\tau_H)} = \gen{D(\tau)}/ (\gen{D(\tau)}\cap H) = 
\hat{H}/\hat{H} \not\cong \Z^d/H$, finishing the proof. 
\QED\ 
\end{proof}

Using the ftp we also get new conditions for transitivity again
for untwisted skew products.
\begin{theorem} \label{ftp2}
Assume that $\tau$ is an untwisted skew product with 
group factor $G=\Z^d$ and
the base shift $(\Sigma, \sigma)$ a transitive countable
state Markov shift. The following are equivalent:
\begin{compactenum}
\item $\tau$ is transitive,
\item $D(\tau) = \gen{D(\tau)} = \genp{D(\tau)}=\Z^d$,
\item $\tau$ has the  ftp and  there is
a rank $d$-subgroup $H\subset\genp{D(\tau)}$,
\item There exists a finite
set $\{d_i\}\subset D(\tau)$ so  that $\gen{\{d_i\}} = \Z^d$
and $0 = \sum a_i d_i$ with the $a_i$ positive integers.
\end{compactenum}
\end{theorem}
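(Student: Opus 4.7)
The plan is to establish each of (b), (c), (d) as equivalent to (a), with Coudene's Theorem~\ref{coudene} as the main input; recall Coudene gives transitivity iff $\genp{D(\tau)} = \Z^d$, so all four conditions will cluster around this identity.

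For (a)$\Leftrightarrow$(b), Coudene reduces (a) to $\genp{D(\tau)} = \Z^d$, so only the upgrade to $D(\tau) = \Z^d$ itself needs attention. Working in the conjugate countable state Markov shift $\hSigma$ from \S\ref{conn}, transitivity of $\tau$ means that for any state $s$ of $\Sigma$ and any $\vm\in\Z^d$ there is a lifted transition $(s,0)\raw(s,\vm)$ of some finite length $n$. Projecting this path back to $\Sigma$ yields a $\sigma$-periodic orbit $p$ of period $n$ with $h(p,n)=\vm$, so $\vm\in D(\tau)$; hence $D(\tau) = \Z^d$ and the remaining two identities are automatic. The converse direction is immediate from Coudene.

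For (a)$\Leftrightarrow$(d): given (a), and hence (b), pick a $\Z$-basis $e_1,\dots,e_d$ of $\Z^d$ inside $D(\tau)$ and append $e_{d+1} := -\sum_{i=1}^d e_i$, which also lies in $D(\tau) = \Z^d$; the set $\{e_1,\dots,e_{d+1}\}$ generates $\Z^d$ and satisfies $\sum_{i=1}^{d+1} 1\cdot e_i = 0$ with all coefficients positive. Conversely, given (d), for any $\vm = \sum b_i d_i \in \Z^d$ the identity $\vm = \sum (b_i + M a_i)\,d_i$ (using $\sum a_i d_i = 0$) has all positive coefficients once $M$ is large enough, placing $\vm$ in $\genp{D(\tau)}$; Coudene then gives (a).

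The main obstacle is (c)$\Rightarrow$(a). Given ftp and a rank-$d$ subgroup $H\subset\genp{D(\tau)}$, Lemma~\ref{ftp1} yields $\gen{D(\tau)} = \Z^d$, and since $H$ is cofinite, $K := |\Z^d/H|$ is finite with $K\Z^d\subset H$ by the fundamental fact of \S\ref{subgroup}. For arbitrary $\vm\in\Z^d$, write $\vm = \sum b_j d_j$ with $b_j\in\Z$, $d_j\in D(\tau)$, and separate into positive and negative parts $\vm = P - N$ with $P,N\in\genp{D(\tau)}\cup\{0\}$. Then $\vm + KN = P + (K-1)N\in\genp{D(\tau)}$, while $-KN\in K\Z^d\subset H\subset\genp{D(\tau)}$---here I crucially use that $H$ is a \emph{subgroup} (so $-KN\in H$), not merely a subsemigroup. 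Summing these two semigroup elements gives $\vm\in\genp{D(\tau)}$, so $\genp{D(\tau)} = \Z^d$ and Coudene delivers transitivity. A handful of degenerate sub-cases ($\vm=0$, $P = 0$, or $K = 1$) need brief separate verification but are routine. The direction (a)$\Rightarrow$(c) is immediate: transitivity descends to each quotient $\tau_\Gamma$ via the semiconjugacy $\id\times\pi$ of \S\ref{quotientsect} (yielding ftp), while $\genp{D(\tau)} = \Z^d$ itself trivially serves as a rank-$d$ subgroup.
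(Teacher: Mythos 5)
Your proof is correct and follows essentially the same route as the paper: Coudene's Theorem and Lemma~\ref{ftp1} carry the weight, and your arguments for (a)$\Leftrightarrow$(b), (b)$\Rightarrow$(d), (d)$\Rightarrow$(a), and (a)$\Rightarrow$(c) match the paper's. For (c)$\Rightarrow$(a) you absorb the negative part of $\vm$ directly via $K\Z^d\subset H\subset\genp{D(\tau)}$, whereas the paper passes to the finite quotient $\Z^d/H$ and invokes $\gen{\cdot}=\genp{\cdot}$ there together with \eqref{displaceeq}; these are two implementations of the same underlying idea, with identical inputs ($\gen{D(\tau)}=\Z^d$ from the ftp, and a co-finite $H$ inside $\genp{D(\tau)}$).
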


\begin{proof}
If $\tau$ is transitive, then
 for every $\vn\in\Z^d$ and state $s_0$ for
$\Sigma$, treating $\tau$ as a countable state Markov
shift, there is an allowable transition 
$(s_0, 0) \raw (s_0, \vn)$. This implies
that $\sigma$ has a period point with displacement $\vn$.
Thus, (a) implies that $D(\tau) = \Z^d$, and since obviously
$D(\tau) \subset \genp{D(\tau)}\subset \gen{D(\tau)}\subset\Z^d$,
the other equalities in (b) follow. Conversely, Theorem~\ref{coudene}
shows that (b) implies (a).

Now assume (d). By hypothesis $\gen{\{d_i\}} = \Z^d$ and thus
for any $\vn\in\Z^d$ there are integers $b_i$ with
$\vn = \sum b_i d_i$. Since the given
$a_i > 0$ in (d), we may find   $k>0$ with $k a_i + b_i > 0$ for
all $i$. Thus $\vn = \sum (k a_i + b_i) d_i$ and so
$\Z^d = \genp{\{d_i\}}\subset \genp{D(\tau)}$, implying
that $\Z^d =\genp{D(\tau)}$, and so $\tau$ is transitive 
by Theorem~\ref{coudene}, showing that (d) implies (a).

Now assume (b) and so $D(\tau) = \Z^d$. Let
 $d_1, \dots, d_{d}$ be a basis
for $\Z^d$ and for $ i = 1, \dots, d$, let $d_{d + i} = - d_{i}$.
Thus $\gen{\{d_i\}} = \Z^d$ and $\sum d_i = 0$ as required for (d).

Now assume (c). As noted in \eqref{displaceeq}, if $\pi_H: \Z^d\raw \Z^d/H$ is
the projection, then $\pi_H(\genp{D(\tau)}) = \genp{D(\tau_H)}$ and 
$\pi_H(\gen{D(\tau)}) = \gen{D(\tau_H)}$. Since $H$ has rank $d$,
it is co-finite and so
$\gen{D(\tau_H)} = \genp{D(\tau_H)}$, thus  
$ \pi_H(\gen{D(\tau)})= \pi_H(\genp{D(\tau)})$. Now since we
are assuming that $\tau$ has the ftp, by Lemma~\ref{ftp1} we have
$\gen{D(\tau)} = \Z^d$, and so  $\pi_H(\Z^d) = \pi_H(\genp{D(\tau)})$.
Thus, for each $\vn\in\Z^d$ there is a $\vm\in \genp{D(\tau)}$ with
$\vn + H = \vm + H$.  This means that some $h\in H$, 
$\vn = \vm + h$. Thus since $h\in H\subset  \genp{D(\tau)}$, we have
that $\vn\in  \genp{D(\tau)}$.  Since $\vn$ was
arbitrary, $ \genp{D(\tau)} = \Z^d$ and so
by Theorem~\ref{coudene} again, $\tau$ is transitive. Thus (c)
implies (a).

As remarked in \S\ref{quotientsect}, for any subgroup
 $\Gamma\subset\Z^d$, $\tau$ is semiconjugate to $\tau_\Gamma$,
and so if $\tau$ is transitive, then so is $\tau_\Gamma$. Thus
(a) coupled with (b) implies (c) using $\Z^d$ itself
as the rank $d$-subgroup $H$,
 finishing the proof.
\QED\
\end{proof}

\subsection{Rotation sets over subshifts of finite type}\label{rotsub}	
A few of the fundamental properties of the rotation set over
a transitive subshift of finite type will be needed in the sequel.
Basic results were obtained by Fried in \cite{friedtop} and
\cite{friedcomm} in the context of suspension flows, and much more
detailed  results were obtained by Ziemian in 
\cite{ziemian}  for the more general case of the Birhoff 
averages of any bounded vector-valued
function (see also  \cite{symbolpolytope}).

Recall that for a subshift of finite type $(\Sigma,\sigma)$,
a \de{simple block} is an allowable block that starts
and ends with the same symbol and contains no other symbol more
than once. Simple blocks are also sometimes called elementary blocks
or minimal loops. A \de{simple periodic point} is a periodic
point of $\sigma$ constructed
by infinite concatenation of a simple block. 
Let
\begin{equation*}
D_{simp}(\tau) = \{ h(p, n) : p\ \text{is a simple periodic point
with period}\ n\}.
\end{equation*}
It follows immediately from the definition of simple blocks
that any sequence in  $\Sigma$ can be written as the 
concatenation of such blocks. In particular,
 a periodic point $p$ of period $n$ can be constructed by concatenating
simple blocks $p_1, p_2, \dots, p_k$ of periods $n_1, n_2, \dots, n_k$
with $n = \sum p_j$. Thus 
$h(p, n) = \sum h(p_j, n_j)$ which implies that $\genp{D(\tau)}\subset
\genp{D_{simp}(\tau)}$. Since the other implication is trivial,
 \begin{equation}\label{simpledis}
\genp{D(\tau)} = \genp{D_{simp}(\tau)}\ \text{and similarly,} 
\ \gen{D(\tau)} = \gen{D_{simp}(\tau)}.
\end{equation}
For a linear transformation $L:\Z^d\raw \Z$, let
$\hL$ denote its linear extension to
$\hat{L}:\R^d\raw\R$.

\begin{theorem}\label{friedthm}
Assume that  $\tau$ is an untwisted skew product
 with base shift a transitive subshift of finite type and
group factor $\Z^d$.
\begin{compactenum}
\item  The rotation set
$\rot(\tau)$ is equal to the convex hull of the rotation vectors
of the simple periodic points.
\item Assuming that $0\in\rot(\tau)$,
 the rotation set $\rot(\tau)$ is $d$-dimensional
if and only if $\gen{D(\tau)}$ is a rank $d$-subgroup of $\Z^d$
\item If $\tau$ has the ftp and $0\in\rot(\tau)$,
 then rotation set $\rot(\tau)$ is $d$-dimensional.
\end{compactenum}
\end{theorem}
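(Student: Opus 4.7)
The plan is to handle the three parts in order, leveraging the decomposition of periodic orbits into simple loops and the characterization of the ftp from Lemma~\ref{ftp1}.

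For part (a), the key observation is that since the base is a subshift of finite type on finitely many states, there are only finitely many simple blocks, hence $D_{simp}(\tau)$ is a finite subset of $\Z^d$ and its convex hull in $\R^d$ is automatically closed. To show $\rot(\tau)$ is contained in $\mathrm{conv}(\{h(p,n)/n : p \text{ simple periodic of period } n\})$, I would argue as follows: if $\us \in \Sigma$ has a well-defined rotation vector, use the transitivity of $(\Sigma,\sigma)$ to construct periodic points $p_N$ whose initial block shadows $\us$ for a very long time, so that their rotation vectors converge to $\rot(\us)$. Each such $p_N$ decomposes as a concatenation of simple blocks $p_1,\dots,p_j$ of periods $n_1,\dots,n_j$, so its rotation vector is the convex combination $\sum (n_i / \sum n_\ell) \cdot h(p_i,n_i)/n_i$ of simple periodic rotation vectors; passing to the limit gives $\rot(\us)$ in the closed convex hull. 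Conversely, any convex combination of simple periodic rotation vectors can itself be realized as the rotation vector of an explicit orbit obtained by concatenating long runs of the relevant simple loops in the prescribed proportions (using transitivity to bridge between them); alternatively, this direction can be extracted from Ziemian's analysis in \cite{ziemian}.

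For part (b), the hypothesis $0 \in \rot(\tau)$ forces the affine hull of $\rot(\tau)$ to coincide with its linear span, so the dimension of $\rot(\tau)$ equals the dimension of $\mathrm{span}_\R \rot(\tau)$. By part (a), this span equals $\mathrm{span}_\R \{ h(p,n)/n : p \text{ simple periodic of period } n\}$, which after clearing denominators equals $\mathrm{span}_\R D_{simp}(\tau)$. By \eqref{simpledis}, $\gen{D(\tau)} = \gen{D_{simp}(\tau)}$, so this $\R$-span also equals $\mathrm{span}_\R \gen{D(\tau)}$. A subgroup of $\Z^d$ has rank $d$ precisely when its $\R$-span is all of $\R^d$, giving the claimed equivalence.

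Part (c) is a corollary of (b) and Lemma~\ref{ftp1}: the ftp gives $\gen{D(\tau)} = \Z^d$, which is obviously rank $d$, and $0 \in \rot(\tau)$ is in the hypothesis, so (b) supplies the conclusion.

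The main obstacle is the approximation argument in part (a): one must be careful that the shadowing-by-periodic-orbits construction preserves the convergence of Birkhoff averages in the presence of bounded but nontrivial connecting segments. Since $h$ is locally constant and the number of states is finite, the displacement along any connecting block is uniformly bounded, so its contribution to $h(p_N,n)/n$ vanishes as $n \to \infty$; this is the only real estimate needed, and the finiteness of $D_{simp}(\tau)$ then eliminates the usual subtlety about closedness of the convex hull.
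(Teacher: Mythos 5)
Your proposal is correct and follows essentially the same route as the paper: for (a) the paper simply cites Ziemian (Theorem~3.4 of \cite{ziemian}), while you sketch the underlying approximation argument and then also point to Ziemian; for (c) the reduction via Lemma~\ref{ftp1} is identical. For (b) the only real difference is presentational: the paper phrases the dimension count dually, arguing that $\dim\rot(\tau)<d$ iff $\rot(\tau)\subset\ker\hL$ for some onto $L:\Z^d\to\Z$ (using that the extreme points are rational), and then that $\gen{D(\tau)}\subset\ker L$ iff $\rank\gen{D(\tau)}<d$; you instead compare $\R$-spans directly, using $0\in\rot(\tau)$ to equate the affine hull with the linear span and then identify $\spann_\R\rot(\tau)=\spann_\R\gen{D(\tau)}$ via \eqref{simpledis}. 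Both are correct and equivalent; your span formulation is marginally more direct and avoids invoking the rationality of extreme points, while the paper's kernel formulation makes the role of integral functionals (used again in Lemma~\ref{boundlem}) more visible.
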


\begin{proof} For a proof of (a), see Theorem 3.4 in \cite{ziemian},
\cf\  Lemma 3 in \cite{friedtop} and Proposition 3.2 in  
\cite{symbolpolytope}.

We now prove the equivalence in (b). From (a) we know that
$\rot(\tau)$ is a convex hull in $\R^d$ with extreme points in
$\Q^d$, and by assumption,  $0\in\rot(\tau)$. This implies that
$\dim(\rot(\tau)) < d$ if and only if $\rot(\tau)$ is in the kernel of some
linear $\hat{L}$ which is the extension of a linear,
onto $L:\Z^d\raw\Z$. 
Now for each simple
periodic point $p_i$ of period $n_i$, $n_i\rot(p_i) = h(p_i,n_i)$,
and from \eqref{simpledis}, $\gen{D(\tau)} = \gen{D_{simp}(\tau)}$. 
This implies that $\rot(\tau)\subset\ker(\hat{L})$ if and only if 
$\gen{D(\tau)}\subset \ker(L)$. But $\gen{D(\tau)}\subset \ker(L)$
for some linear, onto   $L:\Z^d\raw\Z$ if and only if
$\gen{D(\tau)}$ has rank less than $d$. This proves (b).

If $\tau$ has the ftp, by Lemma~\ref{ftp1}  $\gen{D(\tau)}=\Z^d$,
and so (c) follows from (b). 
\QED\
\end{proof}

\begin{remark}\label{pAtopdim} 
We now give the proof of  the assertion regarding
the dimension in Theorem~\ref{summary}(c). 
Assume that $\tphi$ and $\tau$ correspond
and $\Z^d/F = \Z^k$ for $k>0$ (if $k=0$ there is
nothing to prove). In addition, let $\talpha_F$ be
the projection of the semiconjugacy between 
 $\tphi$ and $\tau$ to one between 
 $\tphi_F$ and $\tau_F$

If $\us$ is a 
periodic point for the base shift, then 
$\rot(\us, \tau_F) = \vp/q\in\Q^k$, and
so  if $\tx_F = \talpha_F(\us,0)$, then  
 $\rot(\tx_F, \tphi_F) = \vp/q$ also, and 
thus by \eqref{rotpower},
$0\in\rot(\delta_{-\vp}\; \tphi_F^q) = \rot(T_{-\vp}\; (\tau_F)^q)$. 
Now $\delta_{-\vp}\; \tphi_F^q$ is a lift to
$\tM_F$ of the \pA\ map $\phi^q$, and so
 $T_{-\vp}\; (\tau_F)^q$ has the ftp.  Thus by 
Theorem~\ref{friedthm}(c), $\rot(T_{-\vp}\; (\tau_F)^q)$
has dimension $k$ and so using \eqref{liftpower}, 
so does $\rot(\tau_F) = \rot(\tphi_F)$. 
\end{remark}

The next lemma describes another  property of the rotation
set over a subshift of finite type):  if
there is a large enough displacement in any ``direction''
in the cover, then there is a point with rotation
vector in that direction. One consequence is
a useful condition for deciding if zero is in the interior of a rotation
set. The author learned this argument from David Fried in 1983, and
a version of it was used in \cite{bgh}.

\begin{lemma}\label{boundlem}
Assume that $\tau$ is a untwisted skew product with group
component $\Z^d$,   height function $h$, and base shift
the transitive subshift of finite type $(\Sigma,\sigma)$.
For any onto linear functional $L:\Z^d\raw\Z$, there exists
a $C>0$ so that if there is a point $\us\in\Sigma$ and 
a positive integer $n$ with $L(h(\us,n)) > C$,  
then there is a period-$n'$ point $\us'$ with 
$\hL(\rot(\us'))> 0$, and if  there are $\us$ and
$n$ with $L(h(\us,n)) <- C$, then there is a $\us''$ with 
$\hL(\rot(\us''))<0$.

Thus, $0\in Int(\rot(\tau))$ if and only if for 
every linear, onto $L:\Z^d\raw\Z$, 
\begin{equation}\label{nobound}
\sup \{ L( h(\us,m)): \us\in\Sigma, m\in\N \} = \infty.
\end{equation}
\end{lemma}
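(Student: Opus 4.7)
The plan is to establish the first statement by a closing-up argument that exploits transitivity of the base subshift, then derive the equivalence for interior containment via a rational supporting hyperplane applied to $\rot(\tau)$. For the closing-up, let $D$ be the diameter of the transition graph of $\Sigma$ (finite by transitivity and finiteness of the state set), and let $B := \max\{|L(h(i,j))| : i\raw j\ \text{allowable}\}$, which is finite since $h$ is constant on length-two cylinders and the state set is finite. Set $C = BD$. Given $\us \in \Sigma$ and $n \in \N$ with $L(h(\us,n)) > C$, write $a = s_0$ and $b = s_n$; transitivity provides an allowable block $b = t_0\,t_1\,\cdots\,t_\ell = a$ of length $\ell \leq D$. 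Concatenating $s_0 s_1 \cdots s_n$ with $t_1 t_2 \cdots t_\ell$ and repeating periodically produces a periodic point $\us'$ of period $n' = n + \ell$ whose height cocycle differs from $L(h(\us,n))$ by at most $B\ell \leq BD = C$, so $L(h(\us', n')) > 0$ and hence $\hat L(\rot(\us')) > 0$. Replacing $L$ with $-L$ handles the opposite sign.

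For the equivalence, the forward direction is immediate: if $0 \in \Int(\rot(\tau))$, then for any onto $L:\Z^d\raw\Z$ there is $v \in \rot(\tau)$ with $\hat L(v) > 0$, and any $\us$ with $\rot(\us) = v$ gives $L(h(\us,n))/n \raw \hat L(v) > 0$, so $\sup_{\us,m} L(h(\us,m)) = \infty$. For the reverse direction I would argue contrapositively. If $0 \notin \Int(\rot(\tau))$, then by Theorem~\ref{friedthm}(a) the set $\rot(\tau)$ is the convex hull of the rational rotation vectors of the finitely many simple periodic points, so it is a rational polytope. Standard convex analysis (normal cone at $0$ when $0 \in \rot(\tau)$, strict separation otherwise) yields a nonzero rational linear functional $\hat L:\R^d\raw\R$ with $\hat L \leq 0$ on $\rot(\tau)$; after clearing denominators and dividing by the gcd of the coefficients we obtain an onto $L:\Z^d\raw\Z$ with the same property. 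If the sup condition held for this $L$, the first part of the lemma would produce a periodic point with $\hat L(\rot) > 0$, contradicting $\hat L \leq 0$ on $\rot(\tau)$.

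The only subtle ingredient is producing a rational (hence onto) supporting functional, and this is where rationality of the polytope $\rot(\tau)$, supplied by Theorem~\ref{friedthm}(a), plays the essential role. Everything else is essentially a pigeonhole closing-up argument resting on finiteness of the state set and boundedness of $L \circ h$ on one-step transitions.
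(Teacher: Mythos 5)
Your proposal is correct and takes essentially the same approach as the paper: a closing-up argument via transitivity for the first claim (the paper fixes one transition $d_{i,j}$ per pair and sets $C = \max\{2C_1,1\}$, you bound per-step and multiply by the graph diameter, which amounts to the same thing), and for the iff both arguments rely on $\rot(\tau)$ being a rational polytope (Theorem~\ref{friedthm}(a)) to extract a rational supporting functional — a point the paper leaves implicit but you spell out.
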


\begin{proof}
Since $(\Sigma,\sigma)$ is transitive, for each pair of symbols
$i,j$ there is an allowable transition $i\raw j$ for $\sigma$.
Define  $d_{i,j}\in\Z^d$ as the group coordinate of the
lift of these transitions to transitions for $\tau$, so
$(i,0) \raw(j, d_{i,j})$. Given $L$, 
let $C_1 = \max_{i,j}\{ |L(d_{i,j})| \}$ and
$C = \max\{2 C_1, 1 \}$.  Assume now that for some 
$\us\in\Sigma$ and $n>0$ we have $L(h(\us,n)) > C$. 
The first $n+1$-symbols $s_0 s_1 \dots s_n$ in the given sequence $\us$ give
 $s_0\raw s_n$.  Using the transitivity of  $(\Sigma,\sigma)$, 
we have $s_n\raw s_0$. The concatenation of these two allowable
transitions is 
$s_0\raw s_n \raw s_0$ which lifts to
\begin{equation*}
(s_0, 0) \raw (s_n, h(\us, n)) \raw (s_0, h(\us,n) + d_{s_n,s_0}).
\end{equation*}
Thus if $\us'$ is constructed by infinite concatenation of
the block corresponding to  $s_0\raw s_n \raw s_0$ and
$n'$ is the sum of $n$ and the length of the transition
 $s_n\raw s_0$, then  $\us'$ is a periodic point
of period $n'$ and  $L(h(\us',n')) = L(h(\us,n)) + L(d_{s_n,s_0}) > C/2$.
 Since $\rot(\us',n') = h(\us',n')/n'$ and $\hL$ is linear
we have $\hL(\rot(\us'))> C/(2n')$. The construction of $s''$ is similar.

To prove the last statement of the lemma, first
note that $0\in Int(\rot(\tau))$ if
and only if for every linear, onto $L:\Z^d\raw \Z$ there exists
an $r\in\rot(\tau)$ with $\hL(r) > 0$. Assume now that 
for every linear, onto $L:\Z^d\raw\Z$, \eqref{nobound} holds.
Thus, in particular, if $C(L)$ is the constant depending
on $L$ given by the first paragraph of the theorem, there is
a point $\us$ and an $m>0$ with $L(h(\us,m))>C(L)$ and so
 there is a point
$\us'$ with $\hL(\rot(\us'))>0$, and so $0\in Int(\rot(\tau))$. Now conversely,
assume that there exists a linear, onto $L$ such that for all 
$\us$, there is a constant $K$ with $\sup_{m\in\N} L( h(\us,m))< K$.
Then certainly for any $\us$ for which  the rotation vector
exists, we have $\hL(\rot(\us)) \leq 0$, and so   $0\not\in Int(\rot(\tau))$.
\QED\
\end{proof}

\begin{remark} Note that the lemma does not say that there
always exists a point $\us$ as in the first paragraph
of the statement. A trivial example is when
$\Sigma$ is the full two-shift and $h\equiv 0$.
\end{remark}

\subsection{Proof of Theorem~\ref{rotthm}}\label{oldproof}
Before giving the proof of the theorem stated at the
beginning of this section we need a small fact about convex polytopes.
If $\{x_1, \dots, x_k \}$ is a finite set of points in $\Q^d$
and zero is in the interior of their convex hull,
then are positive integers $b_j$ with 
$0 = \sum_{i=1}^k b_j x_j$. The proof is an exercise.

\medskip
\textbf{Proof of Theorem~\ref{rotthm}:}
If $\tau$ is transitive, then certainly any quotient
is transitive and so $\tau$ has the ftp. To show that 
(a) implies (c) we must show that transitivity of 
a countable Markov shift implies it has dense periodic
points. This is standard: given any allowable block
$s_0 \dots s_n$, we must find a periodic point which
contains that block. Since the shift is transitive,
there is an allowable transition $s_n\raw s_0$. The
concatenated transitions $s_0 \raw s_n\raw s_0$ repeated
infinitely often yields the required periodic point. 
If  $0\not\in Int(\rot(\tau))$, then it follows from
the second statement of Lemma~\ref{boundlem} 
that $\tau$ is not transitive, and so (a) implies (b).

Now assume (c). Since $\tau$ has a periodic point,
certainly $0\in\rot(\tau)$ and since $\tau$ has the ftp,
by Theorem~\ref{friedthm}(c), $\rot(\tau)$ is $d$-dimensional. Thus if 
$0\in Fr(\rot(\tau))$ there is a linear, onto
$L:\Z^d\raw\Z$ with $\hL(r) \geq 0$ for all $r\in \rot(\tau)$ 
and using Theorem~\ref{friedthm}(a), there is a  point
$\us\in\Sigma$ with $\hL(\rot(\us)) > 0$.
Thus, we may find
an $m>0$ with $L(h(\us,m)) > 2 C$, where $C$ is the constant
associated with $L$ from Lemma~\ref{boundlem}. Now, since by
hypothesis periodic points
of $\tau$ are dense, treating $\tau$ as a Markov shift we
see that there must be a periodic orbit $t'$ of
$\tau$ that begins with the block
 $(s_0, 0),  (s_1, \vn_1),  \dots (s_m, \vn_m)$ with 
 $n_j = \vn_{j-1} + h(s_{j-1}, s_j)$ and $L(\vn_M) > 2C$.
 Since $t'$ is periodic, it must continue with a block
$(s_m, \vn_m),  (s_{m+1}', \vn_{m+1}),  \dots (s_{m+k}', \vn_{m+k}), (s_0, 0)$
for some $k>0$. Thus if $\hat{\us}$ is any allowable sequence for
$\sigma$ beginning with $s_m,  s_{m+1}',  \dots s_{m+k}', s_0$,
we have that $L(h(\hat{\us}, k + 1)) < -C$. Thus by Lemma~\ref{boundlem},
there is a  point $\us'$ with  $\hat{L}(\rot{\us'}) < 0$,
a contradiction.

Now  assume (b). Let $\{p_i\}$ be the finite set of simple
periodic points of $\sigma$  and let $d_j = h(p_j, k_j)$
where $k_j$ is the period of $p_j$. Lemma~\ref{ftp1} coupled
with the fact that $\gen{D_{simp}(\tau)} = \gen{D(\tau)}$ 
yields that $\gen{\{d_j\}} = \Z^d$. 
Since each $\rot(p_j) = d_j/k_j\in\Q^d$,
by the small fact on convex polytopes given above,
 $0 = \sum b_j\rot(p_j)$ with all the $b_j$ positive integers.  
 Thus letting  $a_j = b_j (\prod k_i) /k_j$, 
 we have
$0 = \sum a_j d_j$ with all $a_j$ positive integers, and so by
Theorem~\ref{ftp2}, $\tau$ is transitive.
 \QED\

\section{Examples}\label{examples}
We collect some simple examples which illustrate
 why various hypotheses are required.
 In all these examples the base shift
is the full two-shift and the skew product is untwisted.
The simple blocks of  the full two-shift written
as one-step transitions are $1\raw 1$, 
$0\raw 0$, $0\raw 1\raw 0$, and $1\raw 0\raw 1$.
We compute $\rot(\tau)$, 
$\gen{D(\tau)}$ and
 $\genp{D(\tau)}$ using the simple periodic points as per 
equation \eqref{simpledis} and Theorem~\ref{friedthm}(a) .
\begin{compactenum}
\item  For $\eta_1$, let the group component be $\Z$ and the
height function  $f_1\equiv 1$. Then 
$\rot(\eta_1) = \{1\}$,
$\gen{D(\eta_1)} =\Z$ and $ \genp{D(\eta_1)} =\{1, 2, \dots\}$.
So $\eta_2$ has the ftp, is not transitive and its rotation
set lacks interior, and has no periodic points.

\item For $\eta_2$, let the group component be $\Z$ and the
height function $f_2$ be given by $f_2(00) = -2$, $f_2(11) = 2$,
 $f_2(01) = f_2(10) = 0$. Then $\rot(\eta_2) = [-2,2]$ and  
$\gen{D(\eta_2)} =\genp{D(\eta_2)} = 2\Z$, and so $\eta_2$ is
not transitive and does not have the ftp. Thus $0\in Int(\rot(\eta_2))$
does not suffice to imply transitivity. Note also that
under $\eta_2$, $\Sigma\times\Z$ splits into a pair of transitive
subsystems,  $\Sigma\times(2\Z)$ and  $\Sigma\times(2\Z +1)$.
Thus, in particular, the periodic points of $\eta_2$ are
dense, but $\eta_2$ is not transitive.

\item For $\eta_3$, let the group component be $\Z$ and the
height function $f_3$ be given by $f_3(00) = -1$, $f_3(11) = 1$,
 $f_3(01) = f_3(10) = 0$. Then $\rot(\eta_3) = [-1,1]$ and  
$\gen{D(\eta_3)} =\genp{D(\eta_3)} = \Z$, and so $\eta_3$ 
is transitive. But note that $\eta_3^2 = \eta_2$ from
the previous example. Thus $\eta_3$ is transitive, but 
$\eta_3^2$ is not.

\item  For $\eta_4$, let the group component be $\Z^2$ and the
height function $f_4$ be given by $f_4(00) = (1,0) = f_4(01)$
and  $f_4(11) = (0,1)= f_4(10)$.
 Then $\rot(\eta_4)$ is the line
segment connecting $(1,0)$ and $(0,1)$ and has no interior.
Also, $\gen{D(\eta_4)} =\Z^2$ and so $\eta_4$ has
the ftp. Finally, $\genp{D(\eta_4)} =\{ (a,b) : a > 0, b>0\} - \{(0,0)\}$,
and so $\eta_4$ is not transitive. This illustrates that
the ftp alone doesn't imply that the rotation set has
interior.
\end{compactenum}

\section{Lemmas}
\subsection{Subgroup lemma}
The following elementary, technical lemma will be essential in the proof
of the main induction step given in Lemma~\ref{MIL}.
\begin{lemma}\label{GTL}
If $\Gamma\subset\Z^d$ is a rank $d$-subgroup and
$w\in\Z^d$ is a given element, then there exist
elements $g_1, g_2, \dots, g_{2d}\in\Gamma$ and a
 rank $d$-subgroup $H$ so that 
\begin{equation*}
H\subset \genp{g_1 + w, g_2 + w, \dots, g_{2d} + w}
\end{equation*}
\end{lemma}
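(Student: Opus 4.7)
The plan is to choose the $g_j$ so that the shifted points $a_j := g_j + w$ form a symmetric configuration in $\R^d$ that surrounds $0$, and then apply a standard cone-filling argument: once $0$ lies in the interior of the convex hull of $\{a_1,\dots,a_{2d}\}$, one obtains a positive integer relation $\sum c_j a_j = 0$, which converts any integer combination of the $a_j$ into a positive one.

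First, since $\Gamma$ has rank $d$, by \S\ref{subgroup} the quotient $\Z^d/\Gamma$ is finite of some order $K_0$, and $K_0\Z^d\subset\Gamma$. Fix a basis $e_1,\dots,e_d$ of $\Z^d$, write $w=\sum_i w_i e_i$, and choose a positive integer $K$ that is a multiple of $K_0$ and also satisfies $K>\sum_i|w_i|$. Set $g_i := Ke_i$ and $g_{d+i} := -Ke_i$ for $i=1,\dots,d$; each lies in $K\Z^d\subset\Gamma$. Put $a_j := g_j+w$ and let $H := \gen{a_1,\dots,a_{2d}}\subset\Z^d$. Since $a_i - a_{d+i} = 2Ke_i$, the subgroup $H$ contains $2K\Z^d$ and so has rank $d$, as required.

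It remains to prove that $H\subset\genp{a_1,\dots,a_{2d}}$. The convex hull of $\{a_1,\dots,a_{2d}\}$ in $\R^d$ is the $\ell^1$-ball of radius $K$ centered at $w$ (in the basis $e_i$), which contains $0$ in its interior precisely because $\sum_i|w_i|<K$. Hence there exist rationals $\lambda_j>0$ with $\sum_j\lambda_j a_j=0$, and clearing denominators produces positive integers $c_1,\dots,c_{2d}\geq 1$ with $\sum_j c_j a_j=0$. Given any $h\in H$, write $h=\sum_j n_j a_j$ with $n_j\in\Z$, and choose $N$ so large that $Nc_j+n_j\geq 1$ for every $j$. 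Then
\[
h \;=\; \sum_j (Nc_j+n_j)\, a_j \;\in\; \genp{a_1,\dots,a_{2d}},
\]
which proves the containment.

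The only delicate point is the simultaneous choice of $K$: it must be a multiple of $K_0$ to force $g_j\in\Gamma$, yet large enough relative to $w$ to put $0$ in the interior of the translated cross-polytope. Both constraints are met by taking $K$ to be any sufficiently large multiple of $K_0$, and this interplay between the lattice condition and the geometric condition is really the only nontrivial ingredient beyond the standard positive-cone trick.
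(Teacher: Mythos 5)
Your proof is correct, and it takes a genuinely different route from the paper. The paper constructs the $g_j$ algebraically: it picks a primitive element $\barg_1\in\Gamma$ collinear with $w$ (so $p\,\barg_1 = q\,w$ with $q = |\Z^d/\Gamma|$), extends $\barg_1$ to a basis of $\Gamma$, and then builds the explicit positive relation $A\,k_{d+1} + B\,k_1 = 0$ and $B\,k_j + B\,k_{j+d} + C\,k_{d+1} = 0$ by a direct computation, with $A,B,C$ integers coming from the inequality $n_0 < -p/q < n_0+1$; it also has to treat $w\in\Gamma$ as a separate case because that inequality degenerates. You instead replace $\Gamma$ outright by the sublattice $K\Z^d\subset\Gamma$ for $K$ a suitable multiple of $|\Z^d/\Gamma|$, place the shifted points $a_j = \pm K e_i + w$ at the vertices of a cross-polytope, and use the geometric fact that $0\in \mathrm{Int}\,\conv\{a_j\}$ once $K>\sum_i|w_i|$ to produce the positive integer relation $\sum c_j a_j = 0$; you then run the same finishing move (add $N\sum c_j a_j$ to convert any $\Z$-combination into one with positive coefficients), which is also Theorem~\ref{ftp2}(d) in the paper. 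The geometric argument handles $w\in\Gamma$ and $w\notin\Gamma$ uniformly, avoids explicit coefficient bookkeeping and any need to verify linear independence of the $k_j$ (you get rank $d$ for free from $2K\Z^d\subset H$), at the modest cost of invoking the standard fact that a rational point in the interior of a rational polytope admits a positive rational barycentric representation. Both proofs ultimately reduce to exhibiting a positive $\Z$-linear dependence among the translated generators; the paper earns this by a careful choice of basis, you earn it by convexity.
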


\begin{proof}
To begin assume that we are in the special case where
the given $w$ is actually an element of $\Gamma$.
Let $\hatg_1, \dots, \hatg_d$ be a basis for $\Gamma$, and
for $1\leq j \leq d$, let $g_j = \hatg_j - w$, and
for $d < j \leq 2d$,  let $g_j = -\hatg_j - w$. Then
clearly $\genp{g_1 + w, g_2 + w, \dots, g_{2d} + w} =
\Gamma$,  so in this special
case the required $H$ is just $\Gamma$ itself. Henceforth
we assume that $w\not\in\Gamma$.

The strategy of the rest of the proof is to show that 
we may choose the elements $g_1, g_2, \dots, g_{2d}\in\Gamma$
so that if $k_j = g_j + w$, then there are positive integers
$c_j$ with 
\begin{equation*}
\sum_{j=1}^{2d} c_j k_j = 0.
\end{equation*}
Thus for any $j_0$, 
\begin{equation*}
-c_{j_0} k_{j_0} = \sum_{j\not=j_0} c_j k_j, 
\end{equation*}
and so, $-c_{j_0} k_{j_0}\in \genp{\{k_j\}}$. 
Now obviously,  $c_{j_0} k_{j_0}\in \genp{\{k_j\}}$,
and so 
\begin{equation*}
H := \gen{c_1 k_1, \dots, c_d k_d} \subset \genp{\{k_j\}}.
\end{equation*}
The last step in the proof is to show that the elements
  $\{ c_1k_1, \dots,  c_dk_d\} $ are linearly independent, yielding
that $H$ has rank $d$. 

To implement the strategy, as noted in \S\ref{subgroup},
if $q = |\Z^d/\Gamma|$, then $q w\in\Gamma$. Let $\barg_1$ be
primitive in $\Gamma$ and in the same direction
as $w$, i.e. for some $p>0$, $p \barg_1 = q w$. Now also from \S\ref{subgroup},
$\gen{\barg_1}$ is a pure subgroup of $\Gamma$ and so 
$\barg_1$ is an element of a basis $\{\barg_1, g_2,  \dots, g_d\}$ of $\Gamma$.

Now let $n_0\in\Z$ be such that 
\begin{equation}\label{ineq}
n_0 < -p/q < n_0 + 1.
\end{equation}
Note that we have strict inequalities in \eqref{ineq} because
$w\not\in\Gamma$. Now define elements of $\Z^d$,
\begin{eqnarray*}
k_1 &= (n_0 + 1 + p/q)\barg_1 &= (n_0 + 1) \barg_1 + w \\
k_{d+1} &= (n_0  + p/q)\barg_1 &= n_0 \barg_1 + w 
\end{eqnarray*}
and integers
\begin{eqnarray*}
A &= q n_0 + q + p &= q(n_0 + 1 + p/q)\\
B &= -(q n_0  + p) &= -q(n_0 + p/q).
\end{eqnarray*}
Note that $A>0, B>0$ and a simple calculation shows
that 
\begin{equation}\label{part1}
A k_{d+1} + B k_1 = 0,
\end{equation}
and so
$\gen{B k_1} \subset \genp{k_1, k_{d+1}}$. Thus
letting $g_1 = (n_0 + 1)\barg_1$ and
$g_{d+1} = n_0\barg_1$, this
completes the first step of the strategy when $d=1$,
so now assume $d>1$.

Let
\begin{eqnarray*}
k_j &=& g_j + w \ \text{for}\ 2\leq j \leq d\\
k_j &=& -g_j + w \ \text{for}\ 2+d\leq j \leq 2d.
\end{eqnarray*}
Thus letting $C = 2p$,  another simple calculation for
 each $2 \leq j \leq d$ yields
\begin{equation}\label{part2}
B k_j + B k_{j+d} + C k_{d+1} = 0,
\end{equation}
and so summing \eqref{part2} for $j=2$ to $d$ and adding \eqref{part1}
yields
\begin{equation*}
A k_{d+1} + B k_1 
+ C (d-1) k_{d+1}  
+ \sum_{j = 2}^d ( B k_j + B k_{j+d})
= 0.
\end{equation*}
Thus as noted in the second paragraph of the proof this implies,
\begin{equation*}
H  := \gen{B k_1, B k_2, \dots, B k_d} 
\subset \genp{g_1 + w, g_2 + w, \dots, g_{2d} + w}.
\end{equation*}

Finally, to show that $\{B k_1, B k_2, \dots, B k_d\}$
is linearly independent, it suffices to
show that  $\{ k_1, k_2, \dots, k_d\}$ is linearly
independent over $\Q$. If 
$\sum a_j k_j = 0$, then using the definition of
the $k_j$, 
\begin{equation*}
0 = (a_1 (n_0 + 1 + p/q) + p/q(a_2 + \dots + a_d)) \barg_1
+ a_2 g_2 + \dots a_d g_d,
\end{equation*}
and so all $a_j = 0$ using the linear independence of
$\{\barg_1, g_2, \dots, g_d\}$. \QED 
\end{proof}

\subsection{Nilpotent linear Transformations}
In the sufficient conditions for transitivity given
in Proposition~\ref{MSP} below the twisting matrix is required
to satisfy $\spec{A} = \{1\}$. It is often technically
convenient to work with the nilpotent
matrix $A-\id$ in such cases. 
A linear transformation $T:\Z^d \raw \Z^d$ is said
to be \de{nilpotent of order $J$} if $T^{J} = 0$ and
$T^{J-1}\not= 0$. The same definition applies
to square matrices. A nilpotent transformation
of order $J$  generates
a chain of kernels,
\begin{equation*}
0 = \ker(T^0) \subset \ker(T) \subset \ker(T^2) \subset
\dots \subset \ker(T^J) = \Z^d.
\end{equation*}
It is also easy to check that 
\begin{equation}\label{eq2}
T(\ker(T^j)) \subset
\ker(T^{j-1}) \subset \ker(T^j).
\end{equation}
\begin{lemma}\label{NL}
Let $T:\Z^d \raw \Z^d$ be a nilpotent homomorphism
of order $J$. Then there is a direct sum
decomposition  $\Z^d =  V_1 \osum \dots\osum V_J$ with each
 $V_j \not= 0$ so that  for all $j$,
\begin{compactenum}
\item $\ker(T^j) = V_1 \osum \dots V_j$,
\item $T(V_j)\subset V_1 \osum \dots V_{j-1}$,
\item If $p_j:\Z^d\raw V_j$ is the projection,   
then $ (p_{j-1} \circ T)_{\vert V_{j}}$ is injective for $j>1$.
\end{compactenum}
\end{lemma}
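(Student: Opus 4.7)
The plan is to build the decomposition by descending induction on $j$, peeling off a complement $V_j$ at each step of the kernel filtration
\[
0 = \ker(T^0) \subset \ker(T) \subset \ker(T^2) \subset \dots \subset \ker(T^J) = \Z^d.
\]
First I would verify that all inclusions are strict: the standard fact that $\ker(T^j)=\ker(T^{j+1})$ forces $\ker(T^{j+1})=\ker(T^{j+2})$ (if $T^{j+2}(x)=0$ then $T(x)\in\ker(T^{j+1})=\ker(T^{j})$, giving $T^{j+1}(x)=0$), so once the kernels stabilize they stabilize forever, and since $T^{J-1}\neq 0$ the chain must strictly grow all the way up to $\Z^d$.

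The key step is to show that each $\ker(T^{j-1})$ is pure in $\ker(T^j)$, so that, by the discussion in \S\ref{subgroup}, the inclusion actually splits over $\Z$. Purity is the standard cokernel-is-torsion-free argument: if $x\in\ker(T^{j})$ and $mx\in\ker(T^{j-1})$ for some nonzero integer $m$, then $mT^{j-1}(x)=0$ in the torsion-free group $\Z^d$, forcing $T^{j-1}(x)=0$, so $x\in\ker(T^{j-1})$. Granted purity, I can pick, for each $j$ from $J$ down to $1$, a subgroup $V_j\subset\ker(T^j)$ with
\[
\ker(T^j) = \ker(T^{j-1}) \oplus V_j,
\]
and by the strict growth of kernels each $V_j\neq 0$.

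It then remains to verify the three properties. Property (a) is immediate by induction: iterating the splitting gives $\ker(T^j) = V_1\oplus\dots\oplus V_j$, and in particular $\Z^d=\ker(T^J)=V_1\oplus\dots\oplus V_J$. Property (b) is just the observation \eqref{eq2}: for $v\in V_j\subset\ker(T^j)$ we have $T(v)\in\ker(T^{j-1})=V_1\oplus\dots\oplus V_{j-1}$. For property (c), suppose $v\in V_j$ with $p_{j-1}(T(v))=0$. Using (b) write $T(v)=w_1+\dots+w_{j-1}$ with $w_i\in V_i$; the assumption says $w_{j-1}=0$, so $T(v)\in V_1\oplus\dots\oplus V_{j-2}=\ker(T^{j-2})$, whence $T^{j-1}(v)=0$ and $v\in\ker(T^{j-1})\cap V_j=0$ by directness of the sum.

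The main obstacle is the purity step: this is where working over $\Z$ (rather than $\R$ or $\Q$) could in principle cause trouble, since one needs a genuine integral direct summand, not merely a subspace complement. Fortunately purity holds automatically for the kernel filtration of an endomorphism of a free Abelian group, for the one-line reason above, so the rest of the construction is routine.
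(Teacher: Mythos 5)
Your proof is correct and is essentially the same as the paper's: both build the decomposition by splitting the kernel filtration $0=\ker(T^0)\subset\ker(T)\subset\dots\subset\ker(T^J)=\Z^d$, using the fact that a kernel of an endomorphism of a free Abelian group is a pure subgroup (so the inclusion splits over $\Z$), and then verify (a), (b), (c) directly. The paper packages the construction as an induction on the nilpotency order $J$, restricting $T$ to $K=\ker(T^{J-1})$ and invoking the inductive hypothesis for the inner part, while you unroll that induction into a direct peeling of complements at each level of the filtration; this is the same argument in a different presentation.

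Two small points that you handle implicitly but are worth being aware of: the strictness of the kernel chain (which you do prove) is exactly what guarantees each $V_j\neq 0$, and in the verification of (c) the edge case $j=2$ reads $T(v)\in V_1\oplus\dots\oplus V_{0}=\{0\}$, so $v\in\ker(T)\cap V_2=\{0\}$, which works but is a degenerate instance of the general computation.
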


\begin{proof}
The proof proceeds by induction on the order of nilpotency
$J$ with the case $J=1$ being trivial. 
Assume then that the result is true for all nilpotent transformations
of order less than $J$ defined on any finite rank free
Abelian group.

For simplicity of notation let $K := \ker(T^{J-1}) $ and
$\hT = T_{\vert K}$. By \eqref{eq2}, $\hT(K)\subset K$, and
since $\hT$ has order of nilpotency $J-1$, by the inductive
hypothesis, there is a direct sum decomposition  
 $K =  V_1 \osum \dots\osum V_{J-1}$ with each
 $V_j \not= 0$ so that conditions (a), (b), and (c) hold
  for all $1\leq j \leq J-1$.

Since $K$ is a kernel, one easily sees that 
it is a pure subgroup of
$\Z^d$, and thus for some  subgroup $V_J\subset \Z^d$,
$\Z^d = K \osum V_J$.  We now
check that  $\Z^d =V_1 \osum \dots\osum V_J$ satisfies the
required conditions (a), (b), and (c) 
 for all $1\leq j \leq J$

Since $\ker(T^J) = \Z^d$, condition (a) is obviously
satisfied. 
Now certainly, $V_J \subset \ker(T^J) = \Z^d$, and
so by \eqref{eq2}, 
\begin{equation*}
T(V_J) \subset T(\ker(T^J)) 
\subset \ker(T^{J-1}) = V_1 \osum \dots\osum V_{J-1},
\end{equation*}
and so condition (b) is satisfied.

Now we confirm condition (c). We have just shown that
$T(V_J) \subset  V_1 \osum \dots\osum V_{J-1}$. Thus if
$v\in V_J$ and $p_{J-1}\circ T(v) = 0$, then in fact
$T(v) \subset  V_1 \osum \dots\osum V_{J-2} = \ker(T^{J-2})$,
and so $T^{J-2}(T(v)) = 0$, and so $v\in \ker(T^{J-1}) = K$ and
therefore $v \in K \cap V_J = 0$. Thus $\ker(p_{J-1}\circ T_{\vert V_J}) 
=\{0$\}, as required. 
\QED\
\end{proof}

\subsection{Integer matrices with spectrum equal to $\{1\}$}
For a linear transformation  $T$
or a square, integer matrix $A$, its \de{spectrum} 
 is denoted $\spec(T)$ or $\spec(A)$.
We will need a notation for the  block description of a matrix.
Given a collection of positive integers  
$n_1, n_2,  \dots, n_J$ with $\sum n_\alpha = d$, the
block description of type $(n_1, n_2, \dots, n_J)$
of the  $d\times d$ matrix $A$ consists of the
matrices $B_{\alpha,\beta}$, with $1\leq\alpha,\beta\leq J$
with the dimension of $B_{\alpha,\beta}$ equal to 
$n_\alpha\times n_\beta$ and $A = (B_{\alpha,\beta})$, or
more explicitly, the $(i,j)^{th}$ entry of $B_{\alpha, \beta}$ is
the $(n_1 + \dots + n_{\alpha-1} + i, n_1 + \dots + n_{\beta-1} + j)^{th}$
entry of $A$.

The next lemma uses Lemma~\ref{NL} to give a special form for 
a matrix $A$ representing a linear isomorphism
 $S$ with 
$\spec(S) = \{1\}$. The simplest case of a matrix $A$ in
the form is lower tridiagonal with
all $1$'s on the diagonal and all the entries in the first
subdiagonal are nonzero.

\begin{lemma}\label{CFL}
If $S:\Z^d\raw \Z^d$ is an automorphism with $\spec(S) = \{1\}$,
then there a collection of numbers
 $n_1 \geq n_2 \geq \dots \geq n_J > 0$ with $\sum n_j =d$
 and a basis of $\Z^d$ such that
with respect to this basis the automorphism
$S$ is represented by
a matrix $A$ which when written in block form
 of type $(n_1, n_2, \dots, n_J)$ 
has blocks $B_{\alpha,\beta}$ satisfying 
\begin{compactenum}
\item $B_{\alpha,\alpha} = I$ for $\alpha = 1, \dots, J$,
\item $B_{\alpha,\beta} = 0$ for $\alpha < \beta$,
\item $B_{\alpha,\alpha-1}$ has rank $n_\alpha$
 for  $\alpha = 2, \dots, J$.
\end{compactenum}
\end{lemma}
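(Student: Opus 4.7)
The plan is to reduce the lemma to Lemma~\ref{NL} by considering the nilpotent map $T := S - I$. Since $\spec(S)=\{1\}$ and $S$ is a homomorphism of $\Z^d$, the characteristic polynomial of $S$ is $(x-1)^d$, so $T$ satisfies $T^d = 0$; let $J$ be its exact order of nilpotency. Applying Lemma~\ref{NL} to $T$ produces a decomposition $\Z^d = V_1 \osum \cdots \osum V_J$ with $V_j\neq 0$, such that $\ker(T^j) = V_1\osum\cdots\osum V_j$, $T(V_j) \subset V_1\osum\cdots\osum V_{j-1}$, and the induced map $p_{j-1}\circ T\colon V_j \to V_{j-1}$ is injective for $j\ge 2$.

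Set $n_j := \rank V_j$. The first step is to check that the $n_j$ are non-increasing. This is immediate from condition (c) of Lemma~\ref{NL}: the injectivity of $p_{j-1}\circ T\colon V_j \to V_{j-1}$ forces $\rank V_j \le \rank V_{j-1}$, i.e.\ $n_1 \ge n_2 \ge \cdots \ge n_J > 0$ (with $\sum n_j = d$ from condition (a)). This is what allows the subsequent rank condition (c) on the $n_\alpha \times n_{\alpha-1}$ subdiagonal block to even be possible.

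Next I would assemble a $\Z$-basis of $\Z^d$ by choosing a basis of each $V_j$ (Lemma~\ref{NL} guarantees each $V_j$ is a summand, hence free of rank $n_j$) and concatenating these bases in the order dictated by condition (b) of the present lemma. With respect to this basis, compute the matrix of $S = I + T$ in block form of type $(n_1,\dots,n_J)$. The identity contributes exactly $I$ in each diagonal block $B_{\alpha,\alpha}$, giving condition (a). The inclusion $T(V_\beta)\subset V_1\osum\cdots\osum V_{\beta-1}$ forces the $T$-contribution to every block off the subdiagonal side to vanish; combined with the identity, this yields condition (b) ($B_{\alpha,\beta}=0$ off the specified side of the diagonal). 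Finally, the subdiagonal block $B_{\alpha,\alpha-1}$ is, by construction, the matrix of $p_{\alpha-1}\circ T\restriction V_\alpha$ with respect to the chosen bases of $V_\alpha$ and $V_{\alpha-1}$; by Lemma~\ref{NL}(c) this map is injective, so the block has rank equal to $\rank V_\alpha = n_\alpha$, giving condition (c).

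The only real obstacle is bookkeeping: the block ordering in the statement has to match the ordering of the $V_j$'s in the concatenated basis (reversing the natural order from Lemma~\ref{NL} if necessary so that ``upper/lower'' triangularity agrees with condition (b)), and once that matching is fixed every assertion reduces directly to a property already proved in Lemma~\ref{NL}. No further algebraic work is needed; the content of the statement lies in Lemma~\ref{NL} together with the observation that passing from $T$ to $I+T$ preserves the block-triangular structure and only adds the identity on the diagonal.
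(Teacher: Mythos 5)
There is a genuine gap, and it is exactly the point at which the paper takes a transpose. With the concatenated basis of $V_1,\dots,V_J$ (in that order), the inclusion $T(V_\beta)\subset V_1\osum\cdots\osum V_{\beta-1}$ makes the block $B_{\alpha,\beta}$ of $T$ vanish for $\alpha\geq\beta$, so $T$ is strictly \emph{upper} block triangular — not lower, as condition (b) requires. Your claim that ``$B_{\alpha,\alpha-1}$ is the matrix of $p_{\alpha-1}\circ T\restriction V_\alpha$'' mistakes the superdiagonal block for the subdiagonal one: $B_{\alpha,\beta}$ sends the $\beta$ summand to the $\alpha$ summand, so $p_{\alpha-1}\circ T\restriction V_\alpha$ is the block $B_{\alpha-1,\alpha}$. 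Reversing the order of the $V_j$'s does produce a lower triangular matrix, but it does not repair the lemma: the block sizes become $(n_J,\dots,n_1)$, which is non-decreasing rather than non-increasing, and the new subdiagonal block in position $(\alpha,\alpha-1)$ is still the \emph{injective} map $V_j\to V_{j-1}$ of Lemma~\ref{NL}(c). Its rank therefore equals the column count $n_j$ (the smaller block), whereas condition (c) asks for full \emph{row} rank, i.e.\ surjectivity onto the larger block. These are genuinely different when $n_{j-1}>n_j$; a $3\times 3$ single Jordan block with $n_1=2,\ n_2=1$ already shows the reversal fails.

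What the paper does is apply Lemma~\ref{NL} to $C^T-I$ (where $C$ represents $S$), obtaining an upper triangular $\bar A=E(C^T-I)E^{-1}$ whose superdiagonal blocks are injective, and then transposes: $\bar A^T+I=(E^T)^{-1}C(E^T)$ is a representative of $S$ that is lower triangular, has block sizes still $(n_1,\dots,n_J)$ in decreasing order, and its subdiagonal blocks are the transposes of the injective superdiagonal ones, hence of full row rank $n_\alpha$. This passage through $S^T$ is precisely what converts ``injective into the bigger block'' into ``surjective onto the smaller block,'' and it cannot be replaced by a mere reordering of the summands; you need to build in the transpose as the paper does, or equivalently run the analogue of Lemma~\ref{NL} for the image filtration $\im(T^{j})$ rather than the kernel filtration $\ker(T^{j})$.
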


\begin{proof} Let $C$ be the matrix representing $S$ in
the standard basis for $\Z^d$. Since by hypothesis
$\spec(C) =  \{1\}$, we have 
$\spec(C^T) =  \{1\}$ and so $C^T - I$ represents
a nilpotent homomorphism which we denote $T$.
 Applying Lemma~\ref{NL} to $T$, we find
a basis for $T$ or equivalently a unimodular matrix $E$, so
that $\bar{A} := E (C^T - I) E\I$ is block factored of type
 $(n_1, n_2, \dots, n_J)$, where $n_j = \rank(V_j)$ with the $V_j$
as in Lemma~\ref{NL} and 
\begin{compactenum}
\item $\bar{B}_{\alpha,\alpha} = 0$ for $\alpha = 1, \dots, J$,
\item $\bar{B}_{\alpha,\beta} = 0$ for $\alpha > \beta$,
\item $\bar{B}_{\alpha-1,\alpha}$ has rank $n_\alpha$ 
for  $\alpha = 2, \dots, J$.
\end{compactenum}
Finally,  let $A:=\bar{A}^T + I :=   (E^T)\I (C - I) (E^T)  + I = 
(E^T)\I C (E^T)$, and so $A$ represents $S$ 
and it is easy to check that it has
the required block form using the block form of $\bar{A}$.
\QED
\end{proof}

\subsection{Behavior of the Fried quotient under iteration}\label{FFsect}
For any quotient twisted skew product $\tau_\Gamma$
 it follows easily that  $(\tau_\Gamma)^k =
 (\tau^k)_\Gamma$. However, for $k>0$  letting
 $F^{(k)} := P( im(\Psi^k - \id))$, 
 in general one has  $F^{(1)} \not =F^{(k)}$. A simple example
is $\Psi = -\id$.  
  Thus the iterate of the Fried quotient and the
Fried quotient of the iterate, 
$\tau_F^k$ and $(\tau^k)_{F^{(k)}}$,
often act on different spaces and
thus are not equal. However, in the special case of
$\spec(\Psi) = \seto$ we have as a corollary of Lemma~\ref{CFL}:

\begin{corollary}\label{FFRmk}
If $\tau$ is a twisted skew product with twisting
matrix $A$ satisfying $\spec(A) = \seto$,
then the iterate of the Fried quotient is the
Fried quotient of the iterate, or $(\tau_{F^{(1)}})^k=
 (\tau^k)_{F^{(k)}}$, where $F^{(k)} := P( im(\Psi^k - \id))$.
\end{corollary}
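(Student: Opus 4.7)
The general identity $(\tau_\Gamma)^k=(\tau^k)_\Gamma$ recalled in \S\ref{FFsect} reduces the corollary to showing the single algebraic equality
\begin{equation*}
F^{(1)} \;=\; F^{(k)}, \qquad \text{i.e.,}\qquad P(\im(\Psi-\id))=P(\im(\Psi^k-\id)).
\end{equation*}
Once we have this, $F^{(1)}$ is automatically $\Psi^k$-invariant (it is already $\Psi$-invariant), and the two quotients $(\tau_{F^{(1)}})^k$ and $(\tau^k)_{F^{(k)}}$ act on the same space $\Sigma\times(\Z^d/F^{(1)})$ and agree by the general identity above.

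The plan for the algebraic equality is to observe the elementary factorization
\begin{equation*}
\Psi^k-\id \;=\; (\Psi-\id)\,(\Psi^{k-1}+\Psi^{k-2}+\cdots+\Psi+\id),
\end{equation*}
and to write $M:=\Psi^{k-1}+\cdots+\Psi+\id$. The spectral mapping theorem together with $\spec(\Psi)=\{1\}$ gives $\spec(M)=\{k\}$, so $\det(M)=k^d\neq 0$ and hence $M$ is invertible on $\Q^d$ (though not on $\Z^d$). Consequently the two subgroups $\im(\Psi-\id)$ and $\im(\Psi^k-\id)=(\Psi-\id)M(\Z^d)$ of $\Z^d$ have the same $\Q$-span in $\Q^d$. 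Two subgroups of $\Z^d$ with the same $\Q$-span have the same purification (the purification is characterized as $\Z^d$ intersected with the $\Q$-span), so $F^{(1)}=F^{(k)}$, as desired.

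The only subtlety is the step $\det(M)=k^d$. If one prefers to avoid invoking the spectral mapping theorem over $\Z$, one can instead apply Lemma~\ref{CFL} to choose a basis in which $\Psi$ is represented by a lower block-triangular matrix with identity diagonal blocks; then $M=\Psi^{k-1}+\cdots+\id$ is lower block-triangular with diagonal blocks equal to $kI$, so $\det(M)=k^d$ directly. Either way, the argument is short; no serious obstacle arises because the hypothesis $\spec(\Psi)=\{1\}$ forces the ``correction factor'' $M$ to be a rational isomorphism for every $k\ge 1$, which is exactly what fails for a general $\Psi$ (e.g.\ $\Psi=-\id$, where $M=\id+\Psi=0$ when $k=2$).
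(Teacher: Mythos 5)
Your proof is correct, and it takes a genuinely different route from the paper's. The paper proves the corollary by invoking Lemma~\ref{CFL}: it passes to the block-triangular form of $A$, writes $\Z^d=W_1\osum\dots\osum W_J$, observes that $P(\im(A-\id))=W_2\osum\dots\osum W_J$, and then computes that the subdiagonal blocks of $A^k$ are $k$ times those of $A$ and hence still have full rank, so the purification is unchanged for every $k>0$. Your argument instead factors
\begin{equation*}
\Psi^k-\id=(\Psi-\id)\,M,\qquad M=\Psi^{k-1}+\cdots+\Psi+\id,
\end{equation*}
notes that $\spec(\Psi)=\{1\}$ forces $\det(M)=k^d\neq 0$, so $M(\Z^d)$ has finite index in $\Z^d$, hence $\im(\Psi^k-\id)=(\Psi-\id)(M(\Z^d))$ and $\im(\Psi-\id)$ span the same rational subspace and therefore have the same purification; the $\Psi$-invariance (hence $\Psi^k$-invariance) of $F^{(1)}$ and the identity $(\tau_\Gamma)^k=(\tau^k)_\Gamma$ then finish it. Both arguments are sound. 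Yours is more elementary and does not depend on Lemma~\ref{CFL} at all, isolating the key point that the ``correction factor'' $M$ is a rational isomorphism precisely because $\spec(\Psi)=\{1\}$. The paper's version is not shorter, but it exercises the block decomposition that is needed anyway for Proposition~\ref{MSP} and yields as a byproduct the concrete identification $\Z^d/F\cong W_1$, which it re-uses later (e.g.\ in the proof of Theorem~\ref{pA2}).
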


\begin{proof}
Since $\spec(A) = \seto$, we may conjugate $A$ so
that it is in the form given by the block factorization
of Lemma~\ref{CFL}. In the proof of that lemma,
this block factorization corresponds to an internal direct sum
decomposition $\Z^d = W_1 \osum \dots \osum W_J$ as in
Lemma~\ref{NL}. As a consequence of condition (c) in 
Lemma~\ref{CFL}, the purification of $\im(A - \id)$
is exactly $F = W_2 \osum \dots \osum W_J$, and so
$\Z^d/F$ is naturally identified with $W_1$.

Denoting the block factorization 
of $A$  as $B_{\alpha,\beta}$, then for $k>0$, a 
simple computation shows  that we can get  such a factorization
for $A^k$ by taking the $k^{th}$ power of the block
factorization of $A$. Specifically, if we denote 
 the factorization so obtained for $A^k$ as 
$B_{\alpha,\beta}^{(k)}$, then subdiagonal blocks satisfy
$B_{\alpha + 1,\alpha}^{(k)} = k\;B_{\alpha + 1,\alpha}$
 for  $\alpha = 1, \dots, J-1$. Thus since $B_{\alpha + 1,\alpha}$
has rank $n_\alpha$, so does $B_{\alpha + 1,\alpha}^{(k)}$. This implies
that the purification of $\im(A^k - \id)$ is 
 $F^{(k)} = W_2 \osum \dots \osum W_J$ for all $k>0$.
Thus for all $k>0$,  $\Z^d/F^{(k)} = \Z^d/F$
which implies that $(\tau^k)_{F^{(k)}} = (\tau_{F^{(1)}})^k$.
\QED\
\end{proof}

\section{Transitivity of Twisted Skew Products}

\subsection{The main induction lemma}
When the twisting matrix is 
a generalized shear, transitivity is proved by treating
the twisted skew product as a sequence of
untwisted extensions. The main technical step in
doing this is given in the next lemma. 

The simplest nontrivial case of the lemma is
when  $\tau$ has group component $\Z^2$, height
function $h = (h_1, h_2)$, and
twisting matrix 
\begin{equation}\label{skewmat}
A = \begin{pmatrix}
1 & 0\\
1 & 1
\end{pmatrix}.
\end{equation}
Thus in coordinates, 
\begin{equation*}
\tau(\us, m, n) = (\sigma(\us), m + h_1(\us), n + m + h_2(\us)).
\end{equation*}
We then define $\eta$ on $\Sigma\times\Z$ as 
$\eta(\us, m) = (\sigma(\us), m + h_1(\us))$. As in
\S\ref{conn} we treat $\eta$ as a countable state Markov
shift on $\Sigma' = \Sigma\times \Z$. After letting
$\ut = (\us, m)$ and $g(\ut) = g(\us, m) = m + h_2(\us)$, we may
write 
\begin{equation*}
\tau(\ut, n) = (\eta(\ut), n + g(\ut)),
\end{equation*}
and so $\tau$ treated as a map on $\Sigma'\times\Z$ 
is an \textit{untwisted} extension of $(\Sigma',\eta)$. In this
case the lemma says that if $\eta$ is transitive and
$\tau$ has the ftp, then $\tau$ is transitive. Note
that no separate hypothesis in required in the shearing
direction. Roughly speaking, the shear creates a global circulation
which is  conducive of recurrence.

The  lemma allows  group components
$\Z^k$ and $\Z^\ell$, and it requires that the twisting
matrix $A$ be $(k, \ell)$-block factored in lower triangular form
with identity matrices on the diagonal and the
subdiagonal block having full rank.

\begin{lemma}\label{MIL}
Let $\eta:\Sigma\times\Z^k\raw \Sigma\times\Z^k $ 
be a transitive, untwisted skew
product with  height function $h$ and
base shift $(\Sigma,\sigma)$ countable Markov.
 Let $\Sigma' = \Sigma\times\Z^k$ 
and assume that $\tau:\Sigma'\times\Z^\ell\raw \Sigma'\times\Z^\ell$
is an untwisted skew product 
given by
\begin{equation}\label{taudef}
\tau(\ut, n) = (\eta(\ut), n + g(\ut))
\end{equation}
and for $\ut = (\us,m) \in \Sigma'$, the
height function $g$ is required to have the form
\begin{equation}\label{gdef}
g(\ut) = g(\us,m) = S(m) + f(\us),
\end{equation}
with $f:\Sigma\raw\Z^\ell$  constant on
length two cylinder sets and $S:\Z^k\raw \Z^\ell$
a rank $\ell$-homomorphism. If $\tau$ has the ftp,
then it is transitive.
\end{lemma}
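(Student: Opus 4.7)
The plan is to exhibit a rank-$\ell$ subgroup of $\Z^\ell$ inside $\genp{D(\tau)}$ and then apply Theorem~\ref{ftp2}(c), which together with the ftp hypothesis yields transitivity of $\tau$.

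First, since $\eta$ is transitive, viewed as a countable state Markov shift via \S\ref{conn} it has periodic points; any $\eta$-periodic point of period $n$ corresponds to a $\sigma$-periodic $\us_0\in\Sigma$ with $h(\us_0,n)=0$. For such a fixed $\us_0$, the point $(\us_0,m)\in\Sigma'$ is then $\eta$-periodic of period $n$ for \emph{every} $m\in\Z^k$. Using \eqref{taudef} and \eqref{gdef}, the $\Z^\ell$-displacement of the $\tau$-orbit around this family of $\eta$-periodic orbits is
\begin{equation*}
g((\us_0,m),n) \;=\; \sum_{i=0}^{n-1}\bigl[S(m+h(\us_0,i))+f(\sigma^i(\us_0))\bigr] \;=\; nS(m)+C_{\us_0},
\end{equation*}
where $C_{\us_0}:=S\bigl(\sum_{i=0}^{n-1}h(\us_0,i)\bigr)+f(\us_0,n)$ depends on $\us_0$ alone. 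Since $S$ has rank $\ell$, the subgroup $nS(\Z^k)$ has rank $\ell$ in $\Z^\ell$, and the entire coset $nS(\Z^k)+C_{\us_0}$ is contained in $D(\tau)$.

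The main step is then to apply the Subgroup Lemma~\ref{GTL} in $\Z^\ell$ with $\Gamma:=nS(\Z^k)$ and $w:=C_{\us_0}$. It produces elements $g_1,\dots,g_{2\ell}\in nS(\Z^k)$ and a rank-$\ell$ subgroup $H\subset\Z^\ell$ satisfying $H\subset\genp{g_1+C_{\us_0},\dots,g_{2\ell}+C_{\us_0}}$. Writing $g_j=nS(m_j)$ for some $m_j\in\Z^k$, each generator $g_j+C_{\us_0}$ equals $g((\us_0,m_j),n)$ and hence lies in $D(\tau)$, so $H\subset\genp{D(\tau)}$. Because $\tau$ has the ftp, Theorem~\ref{ftp2}(c) concludes that $\tau$ is transitive.

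The hard part is precisely the passage from the affine coset $nS(\Z^k)+C_{\us_0}\subset D(\tau)$ to a genuine rank-$\ell$ subgroup of the \emph{positive} semigroup $\genp{D(\tau)}$; this is what Lemma~\ref{GTL} accomplishes, and it explains why no separate hypothesis on displacements in the shear direction is needed: the shear $S$ already generates a full-rank coset in $D(\tau)$ as $m$ ranges over $\Z^k$, and the subgroup lemma turns such a coset into the required positive-semigroup structure.
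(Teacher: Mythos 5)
Your proof is correct and follows essentially the same approach as the paper: pick an $\eta$-periodic point, observe that by untwistedness the whole family $(\us_0,m)$ is $\eta$-periodic, compute that $D(\tau)$ contains the full-rank affine coset $nS(\Z^k)+C_{\us_0}$, extract a rank-$\ell$ subgroup of $\genp{D(\tau)}$ via Lemma~\ref{GTL}, and conclude by Theorem~\ref{ftp2}(c) together with the ftp. The computation and the appeal to the subgroup lemma match the paper's argument step for step.
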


\begin{proof}
Since $\eta$ is transitive, it certainly has a periodic point,
say $\utn = (\us', 0)$ of period $n_0$. Now since $\eta$ is
untwisted, $\utm := (\us', m)$ is also a periodic orbit
of period $n_0$ for $\eta$.  We treat $\utm\in \Sigma'$
and a straightforward computation using \eqref{taudef}
and \eqref{gdef}
 yields that
\begin{eqnarray*}
g(\utm, n_0) &=& g(\utm) + g(\eta(\utm)) + \dots + g(\eta^{n_0-1}(\utm))\\
&=& g(\us', m) + g(\sigma(\us'), m + h(\us')) + 
\dots g(\sigma^{n_0-1}(\us'), m + h(\us', n_0 -1)) \\
&=&  n_0 S(m) + S\left(h(\us') + \dots h(\us', n_0 -1)\right)
+ f(\us', n_0).
\end{eqnarray*} 
Thus letting $w =  S\left(h(\us') + \dots h(\us_0, n_0 -1)\right)
+ f(\us', n_0)$, we have that $\im(n_0 S) + w \subset D(\tau)$.
Since $S$ has rank $\ell$ by hypothesis, $\im(n_0 S)$ is a rank
$\ell$ subgroup of $\Z^\ell$. Thus  we may apply
 Lemma~\ref{GTL} to  $\im(n_0 S)$ yielding elements
 $m_1, \dots, m_{2d}\in\Z^k$ and
 a rank $\ell$-subgroup $H$, with
\begin{equation*}
H\subset  \genp{n_0 S(m_1) + w, \dots, n_0 S(m_{2d})+w}
\subset\genp{D(\tau)}.
\end{equation*}
Thus by Lemma~\ref{ftp2}, since $\tau$ has the ftp
by assumption, it is transitive.
\QED\
\end{proof}

\subsection{Sufficient conditions for transitivity}
The next proposition uses Lemma~\ref{MIL} as  the induction
step to get transitivity in the case when the
twisting automorphism is a generalized shear in
the form given by Lemma~\ref{CFL}.

\begin{proposition}\label{MSP}
 Assume that 
$\tau:\Sigma\times\Z^d\raw \Sigma\times\Z^d$ is a twisted
skew
product with  height function $h$ and
base shift $(\Sigma, \sigma)$  countable Markov.
 If 
 the twisting automorphism $\Psi$ is 
a generalized shear ($\spec(\Psi) =\{1\}$), 
the  Fried quotient $\tau_{F}$ is transitive,
 and $\tau$ has the ftp, 
then $\tau$ is transitive.
\end{proposition}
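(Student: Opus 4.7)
The plan is to invoke Lemma~\ref{CFL} to put $\Psi$ in convenient block form, and then climb from the Fried quotient $\tau_F$ up to $\tau$ by iterated applications of Lemma~\ref{MIL}.

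Since $\spec(\Psi) = \seto$, Lemma~\ref{CFL} supplies a basis in which $\Psi$ is represented by a block lower-triangular matrix $A$ of type $(n_1, \dots, n_J)$ with identity diagonal blocks and full-rank subdiagonal blocks $B_{\alpha, \alpha-1}$. This yields a decomposition $\Z^d = W_1 \osum \dots \osum W_J$ with $\rank(W_j) = n_j$. A direct check from the block form shows that the subgroups $\Gamma_j := W_{j+1} \osum \dots \osum W_J$ (with $\Gamma_J := 0$) are all $\Psi$-invariant, and the argument of Corollary~\ref{FFRmk} identifies $\Gamma_1$ with the Fried subgroup $F$. Setting $\tau^{(j)} := \tau_{\Gamma_j}$ gives a chain of quotient twisted skew products with $\tau^{(1)} = \tau_F$ (transitive by hypothesis) and $\tau^{(J)} = \tau$.

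I will prove by induction on $j$ that each $\tau^{(j)}$ is transitive. For the step $j \to j+1$, view $\tau^{(j)}$ as a countable Markov shift on $\Sigma^{(j)} := \Sigma \times (W_1 \osum \dots \osum W_j)$ via \S\ref{conn}. The block structure of $A$ gives the update rule for the $W_{j+1}$-coordinate of $\tau^{(j+1)}$ as
\begin{equation*}
v_{j+1} \;\mapsto\; v_{j+1} + B_{j+1,j}(v_j) + \dots + B_{j+1,1}(v_1) + h_{j+1}(\us),
\end{equation*}
independent of $v_{j+1}$ itself, so $\tau^{(j+1)}$ is an untwisted extension of $\tau^{(j)}$ by $W_{j+1}$; and its height function over $\Sigma^{(j)}$ decomposes as $S(v_j) + f(\ut')$, where $S = B_{j+1,j}$ has rank $n_{j+1}$ by Lemma~\ref{CFL}(c) and $f$ depends only on $\ut' = (\us, v_1, \dots, v_{j-1}) \in \Sigma^{(j-1)}$ (in particular, it is constant on length-two cylinders of $\Sigma^{(j-1)}$). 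Since $\tau^{(j+1)}$ is a quotient of $\tau$ it inherits the ftp, and Lemma~\ref{MIL} (with $\eta = \tau^{(j)}$ and ``$\tau$'' there equal to $\tau^{(j+1)}$) then delivers transitivity of $\tau^{(j+1)}$, completing the induction.

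The main obstacle I anticipate is administrative: verifying that the ftp propagates in the precise form required by Lemma~\ref{MIL} at each level---namely, that the ftp of the original $\tau$ over $\Sigma$ (defined via $\Psi$-invariant cofinite subgroups of $\Z^d$) descends through the Noether-isomorphism machinery of \S\ref{quotientsect} to the ftp of $\tau^{(j+1)}$ viewed as an untwisted skew over $\Sigma^{(j)}$ (with group component $W_{j+1}$), and that the height-function constancy on length-two cylinders is maintained as one climbs the tower $\Sigma \subset \Sigma^{(1)} \subset \dots \subset \Sigma^{(J-1)}$. The substantive mathematical content sits entirely in Lemma~\ref{CFL} and Lemma~\ref{MIL}; the proposition is essentially their iterated composition.
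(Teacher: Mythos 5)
The proposal is correct and takes essentially the same route as the paper's own proof: both invoke Lemma~\ref{CFL} to obtain the block lower-triangular form and the associated internal decomposition $\Z^d = W_1 \osum \dots \osum W_J$, identify the bottom layer with the Fried quotient via the argument of Corollary~\ref{FFRmk}, build the same tower of quotients $\tau_{\Gamma_j}$, and climb it by iterated application of Lemma~\ref{MIL}, with the full-rank subdiagonal blocks $B_{\alpha,\alpha-1}$ supplying the rank condition on $S$ at each step. Apart from a harmless index shift in the definition of $\Sigma^{(j)}$ (you use the phase space of $\tau^{(j)}$ where the paper uses its base shift), the two arguments are identical.
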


\begin{proof} 
Since $\spec(\Psi) =\{1\}$, using  Lemma~\ref{CFL}
we can start by choosing a basis for $\Z^d$  
in which $\Psi$ is represented by a matrix $A$ with block
factorization $B_{\alpha,\beta}$ for $\alpha,\beta = 1, \dots,
J$ as in that lemma. Further, we adopt the notation of  Lemma~\ref{NL} 
by assuming that the block factorization of $A$ corresponds to
the internal direct sum decomposition
$\Z^d = W_1 \osum \dots W_J$ with $\rank(W_j) = n_j$.
If $p_j:\Z^d \raw W_j$ is the projection, let $h_j = p_j\circ h$.

We now define a collection of spaces and
untwisted skew products inductively. Let $\Sigma^{(1)} = \Sigma$,
and $\tau_1:\Sigma^{(1)}\times W_1\raw \Sigma^{(1)}\times W_1$
be defined as $\tau_1(\us, m_1) = (\sigma(\us), m_1 + h_1(\us))$.
For $j = 2, \dots, J$, let
 $\Sigma^{(j)} = \Sigma^{(j-1)}\times W_{j-1}$ and
 $\tau_j: \Sigma^{(j)}\times W_j \raw  \Sigma^{(j)}\times W_j$
is defined using $\ut^{(j-1)} \in \Sigma^{(j)}$ written
as  $\ut^{(j-1)} = (\us, m_1, \dots, m_{j-1})$ by
\begin{equation*}
\tau_j(\ut^{(j-1)}, m_j)
 = \left(\tau_{j-1}(\ut^{(j-1)}), m_j + H_j(\ut^{(j-1)})\right)
\end{equation*}
where
\begin{equation*}
H_j(\ut^{(j-1)}) = 
B_{j, 1} m_1 + B_{j, 2} m_2 + \dots + B_{j, j-1} m_{j-1} + h_j(\us).
\end{equation*}

Note that $\tau_J = \tau$, the given twisted
skew product. Also, if for
$j = 1, \dots, J-1$,  we let $\Gamma_j = W_{j+1} \osum\dots W_J$,
then $\Psi(\Gamma_j) \subset \Gamma_j$ by the form of $A$, and
$\tau_j$ is exactly the quotient map 
$\tau_{\Gamma_j}$ as defined in \S\ref{quotientsect}.
Thus since by hypothesis $\tau$ has the ftp,
each $\tau_j$ also has the ftp and the base shift is
transitive. As noted in the proof of Corollary~\ref{FFRmk},
$\tau_1$ is the Fried quotient of $\tau$ and
so is transitive by hypothesis. Finally, 
by the block factorization of $A$ obtained using Lemma~\ref{CFL}
each $ B_{j, j-1}$ is rank $n_j$, and so each $\tau_{j}$ extends $\tau_{j-1}$ 
as required for Lemma~\ref{MIL}. Thus by induction each $\tau_j$
is transitive, and so $\tau_J = \tau$ is also.
\QED\
\end{proof}

\subsection{The main symbolic theorem}
Before we state our main theorem about the transitivity
of twisted skew products we  recall a few more
definitions and facts about an automorphism $\Psi$ of a finitely
generated free Abelian groups or equivalently, the
matrix that represents it in some basis. For simplicity
we just give the terminology and notation for the matrices.

 The \de{spectral radius} of $A$
is denoted  $\rho(A)$. Since matrix $A$ is invertible 
if and only if $\det(A) = \pm 1$, when $A$ is
invertible, if $\rho(A)\not = 1$, there must be eigenvalues
of modulus both larger and less than one. If 
 $\rho(A) = 1$, the eigenvalues of $A$ must all lie on
the unit circle. Now for an eigenvalue $\lambda$ of $A$, certainly
its minimal polynomial must be a factor of the characteristic
polynomial of $A$. Thus if $\rho(A) = 1$,
 all the algebraic conjugates of any eigenvalue $\lambda$ must lie
on the unit circle. Thus as a consequence of a theorem of Kronecker
 (see, for example, problem 11 on pg. $40$ in \cite{marcus}),
$\lambda$ must be a root of unity (the author learned this from
Peter Sin whom he acknowledges with gratitude).  Thus we see that
if $A$ is invertible and $\rho(A) = 1$, then for some integer $N>0$, 
$\spec(A^N) = \{1\}$. This allows the following definition.
\begin{definition}\label{Ndef}
If $A\in\SL(d, \Z)$ and has spectral radius equal to
one, $\rho(A) = 1$, let $N(A)$ be the least positive 
integer with $\spec(A^{N(A)}) = \seto$.
\end{definition}

While many of the preceding results involved
skew products with the base shift a countable Markov
chain,  the main theorem concerns the case where the base shift
a transitive subshift of finite type. This restriction is
required in order to use the various properties
of the rotation set from \S \ref{rotsub}. It is worth remarking,
 however, that the
proof itself uses an induction with untwisted 
extensions of countable state Markov shifts. 
It is also worth remarking that since the alternative (b) in the
theorem only requires that the twisting matrix $A$ has spectral radius one,
we must consider $\tau^{N(A)}$ in order to ensure that the
Fried quotient is nontrivial and the rotation set is thus defined.
Alternative (b) also contains a condition that is the
analog of totally transitive for finite quotients; we say that
$\tau$ has the \de{total ftp} if $\tau^k$ has the ftp for all $k>0$

\begin{theorem}\label{MST}
 Assume that 
$\tau:\Sigma\times\Z^d\raw \Sigma\times\Z^d$ is a twisted
skew product with twisting matrix $A$, height function $h$, and
base shift $(\Sigma, \sigma)$ which is a transitive subshift
of finite type. The following are equivalent:
\begin{compactenum}
\item $\tau$ is totally transitive,
\item $\rho(A) =1$, $\tau$ has the total ftp, 
and $0\in Int(\rot_{F}(\tau^{N(A)}))$,
\item $\tau$ has the total ftp and its periodic points are
dense in $\Sigma\times\Z^d$,
\item $\tau$ is topologically mixing
\end{compactenum}
\end{theorem}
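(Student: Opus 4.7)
The plan is to close the cycle $(d)\Rightarrow(a)\Rightarrow(b),(c)$ and $(b)\Rightarrow(a)\Rightarrow(d)$, with $(c)\Rightarrow(b)$ as a bridge. The implication $(d)\Rightarrow(a)$ is trivial, and $(a)\Rightarrow(d)$ follows by identifying $\tau$ with a countable state Markov shift via \S\ref{conn} and invoking the fact from \S\ref{Markovdef} that for such shifts total transitivity and topological mixing coincide. The implication $(a)\Rightarrow(c)$ is also immediate: transitivity of each $\tau^k$ descends to every co-finite quotient to give the total ftp, and a transitive countable Markov shift has dense periodic points.

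For both $(a)\Rightarrow(b)$ and $(c)\Rightarrow(b)$ I first derive $\rho(A)=1$. The case $\rho(A)<1$ is excluded by $A\in\SL(d,\Z)$. If $\rho(A)>1$, the quasi-isometry of \S\ref{quasisect} together with the exponential-escape estimate \eqref{geo} preclude transitivity, and the fixed-point equation $(A^n-I)\vm=h^{(n)}(\us)$ for periodic points of $\tau^n$ confines $\vm$ to a bounded subset of $\Z^d$, precluding density as well. With $\rho(A)=1$, I pass to $\tau^{N(A)}$, whose twisting matrix $A^{N(A)}$ has spectrum $\seto$. Its Fried quotient $(\tau^{N(A)})_F$ is by construction an untwisted skew product; its base $(\Sigma,\sigma^{N(A)})$ is transitive because the total ftp applied to the trivial quotient $\Gamma=\Z^d$ forces $\sigma^{N(A)}$ to be transitive. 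Since $(\tau^{N(A)})_F$ inherits the ftp, and either transitivity (from (a)) or dense periodic points (from (c)) descend to it, Theorem~\ref{rotthm} then produces $0\in Int(\rot((\tau^{N(A)})_F))=Int(\rot_F(\tau^{N(A)}))$.

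The heart of the argument is $(b)\Rightarrow(a)$: I must show $\tau^k$ is transitive for every $k>0$. My plan is to apply Proposition~\ref{MSP} to $(\tau^k)^{N(A^k)}=\tau^{\mathrm{lcm}(k,N(A))}$, which has generalized-shear twisting and inherits the ftp from the total ftp hypothesis. Its Fried quotient is an untwisted skew product, and Theorem~\ref{rotthm} makes it transitive provided $0$ lies in the interior of its rotation set. The main obstacle is transporting this rotation-interior condition from $\tau^{N(A)}$ to $\tau^{\mathrm{lcm}(k,N(A))}$. Here Corollary~\ref{FFRmk} is essential: since $\spec(A^{N(A)})=\seto$, the corollary identifies $(\tau^{qN(A)})_{F^{(qN(A))}}$ with $((\tau^{N(A)})_F)^q$ as untwisted skew products, and the scaling relation \eqref{rotpower} then gives $\rot_F(\tau^{qN(A)})=q\,\rot_F(\tau^{N(A)})$, so interiors are preserved under $q$-fold iteration. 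With the Fried quotient transitive, Proposition~\ref{MSP} yields transitivity of $(\tau^k)^{N(A^k)}$, hence of $\tau^k$, for every $k>0$, completing the cycle.
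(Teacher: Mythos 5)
Your proof is correct and follows essentially the same path as the paper's: the same tools --- Proposition~\ref{MSP} for the generalized-shear case, Theorem~\ref{rotthm} applied to the (untwisted) Fried quotient, Corollary~\ref{FFRmk} together with \eqref{rotpower} to transport the rotation-interior condition to higher powers of $\tau^{N(A)}$, the escape estimate \eqref{geo} to rule out $\rho(A)>1$, and the standard facts that transitivity/total transitivity/topological mixing travel together for countable state Markov shifts --- appear in the same roles; only the exponent ($\tau^{\mathrm{lcm}(k,N(A))}$ versus the paper's $\tau^{kN(A)}$, both multiples of $N(A)$ and powers of $\tau^k$) and the ordering of the implication cycle differ cosmetically.

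Two small clean-ups worth making. First, the quasi-isometry of \S\ref{quasisect} belongs to the pseudo-Anosov context and is not invoked in the symbolic setting of Theorem~\ref{MST}; the estimate \eqref{geo} already acts directly on the group component of $\Sigma\times\Z^d$, so drop that reference. Second, when $\rho(A)>1$ the periodicity equation $(A^n-I)\vm=-h^{(n)}(\us)$ does not in general confine $\vm$ to a bounded subset of $\Z^d$, since $A^n-I$ can be singular (e.g.\ $1$ may remain an eigenvalue of $A$ even when $\rho(A)>1$). What the functional $\Phi$ built from an eigenvalue $\lambda$ with $|\lambda|>1$ gives is $|\Phi(\vm)|\le C/(\lambda-1)$ for every periodic $(\us,\vm)$ --- a slab, not a bounded set --- and it is that slab condition which precludes density of periodic points. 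This is precisely how the paper handles $(c)\Rightarrow\rho(A)=1$: the same escape estimate that blocks transitivity also blocks density of recurrent points, with no separate fixed-point argument required.
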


\begin{proof}
We first show that (b) implies (a).
 For simplicity of notation, let $\eta_k = \tau^{k N(A)}$.
Using \eqref{skewiteq} the twisting matrix of $\eta_k$ is $A^{k N(A)}$. 
By definition $\spec(A^{N(A)}) = \{1\}$,
and so for all $k>0$, $\spec{A^{k N(A)}} = \{1\}$ also.
 Since by hypothesis  $\tau$ has the total ftp, each
$\eta_k$ also has the ftp.
We now show that the Fried quotient of  each $\eta_k$ is transitive, 
and then Lemma~\ref{MSP} will give that each $\eta_k$ is transitive. . 

 Corollary~\ref{FFRmk} and \eqref{rotpower} yield that 
\begin{equation*}
\rot\left((\tau^{k N(A)})_{F^{(k N(A))}}\right) = 
\rot\left(((\tau^{ N(A)})_{F^{(N(A))}})^k\right)
=k\rot\left((\tau^{ N(A)})_{F^{(N(A))}}\right).
\end{equation*}
Thus since $0\in Int(\rot_{F}(\tau^{N(A)}))$ by hypothesis, 
 $0\in Int(\rot_{F}(\eta_k))$
 for all $k>0$. 
As already noted, each $\eta_k$ has the ftp
 and so as remarked in \S\ref{ftpdef}, 
 $(\eta_k)_{F}$ has the ftp for all  $k>0$. Thus
since each $(\eta_k)_{F}$ is untwisted,
by Theorem~\ref{rotthm}, each $(\eta_k)_{F}$ is transitive,
finishing the proof that each $\eta_k$ is transitive.
Now since  $\eta_k = (\tau^{k})^{ N(A)}$,
a power of every $\tau^k$ is transitive, and so each  
$\tau^k$ is also transitive,  proving (a)

Now conversely, assume that $\tau$ is totally transitive.
If $\rho(A) \not = 1$, there must be an eigenvalue $\lambda$
with $|\lambda|>1$. Assume first that $\lambda$ is real and
positive and treating $A$ as acting on $\R^d$,
let $v_1\in\R^d$ be an eigenvector corresponding to $\lambda$.
Extend $v_1$ to a basis for $\R^d$ and for $w\in\R^d$, let
$\Phi(w)$ be its first component with respect to this
basis, and so $\Phi(A^k w) = \lambda^k \Phi(w)$. 

Since by hypothesis $(\Sigma,\sigma)$ is a subshift
of finite type, $h$ is bounded and let $|\Phi\circ h| < C$.
Now,
\begin{equation}\label{twistiterates}
\tau^k(\us,\vn) = \left(\sigma^k(\us), A^k \vn + A^{k-1} h(\us) + \dots 
+ A h(\sigma^{k-2}(\us)) + h(\sigma^{k-1}(\us))\right),
\end{equation}
and so 
\begin{equation}\label{geo}
\begin{split}
|\Phi\circ\pi_2(\tau^k(\us,\vn))| &\geq \lambda^k |\Phi(\vn)| 
- (\lambda^{k-1} + \dots \lambda + 1) C\\
&= \lambda^k\left(|\Phi(\vn)|-\frac{C}{\lambda-1}\right) +\frac{C}{\lambda-1}.
\end{split}
\end{equation}
Thus if $\vn_0$ is such that  $|\Phi(\vn_0)|>\frac{C}{\lambda-1} $,
then $|\Phi\circ\pi_2(\tau^k(\us,\vn_0))|\raw\infty$ as 
$k\raw\infty$ and so
no point in the open set $\Sigma\times \{\vn_0\}$ can have
a dense, forward $\tau$ orbit, and so $\tau$ is not transitive. The
case where the eigenvalue $\lambda$ is complex is similar,
but now one uses  a $\Phi$ that projects onto the two-dimensional
subspace associated with $\lambda$ in the real Jordan form.
Thus $\tau$ being transitive implies that $\rho(A)=1$,
Finally, if $\tau$ is totally transitive, then 
certainly any quotient of any power is also transitive and so
using Theorem~\ref{rotthm},  $\tau$ has the total ftp 
and $0\in Int(\rot_{F}(\tau^{N(A)}))$,
finishing
the proof that (a) implies (b).

Now note that the argument just given shows that
shows that if $\rho(A) \not= 1$, then the recurrent points of
$\tau$ cannot be dense in $\Sigma\times\Z^d$. 
Thus assuming (c), we have 
$\rho(A) = 1$. In addition, if $\tau$ has dense periodic points then so
does any power or quotient, so in particular, $(\tau^{N(A)})_F$
has dense periodic points and is untwisted and has the ftp by construction.
Thus by Theorem~\ref{rotthm},  $0\in Int(\rot_{F}(\tau^{N(A)}))$ proving
that (c) implies (b).

The fact that transitivity implies dense periodic points was
noted in the proof of Theorem~\ref{rotthm}, and so (a) implies (c). The equivalence
of (a) and (d) for countable state Markov shifts 
is standard and was noted in \S\ref{Markovdef}, finishing the proof.
\QED\
\end{proof}

\begin{remark}\label{MSTrk}
For future reference we note that the argument above based
on \eqref{geo} yields that 
if $\tau$ is transitive, then $\rho(A) = 1$. 
In addition, an analogous argument 
 gives that $|\Phi(\vn - \vn')| > \frac{2C}{\lambda-1}$
implies that for any 
$\us, \us'\in\Sigma$, 
\begin{equation*}
 \left|\Phi\circ\pi_2(\tau^k(\us,\vn)) -
\Phi\circ\pi_2(\tau^k(\us',\vn'))\right|\raw\infty,
\end{equation*} 
as $k\raw\infty$, where $\pi_2:\Sigma\times\Z^d\raw\Z^d$ is the projection.
\end{remark}

\section{Lifted Rel Pseudo-Anosov Maps}
With the help of Theorem~\ref{conn} we now apply 
Theorem~\ref{MST} on twisted skew products to the
study of transitive lifts of rel \pA\ maps.

\subsection{Necessary and sufficient conditions for total transitivity}\label{leafdef}
We recall some definitions about the invariant singular
measured foliations, $\cF^u$ and $\cF^s$, of a rel \pA\ map.
 Because of the presence of  a finite number of 
singularities, there are several ways to define
a ``leaf'' of the foliation. Fix one foliation, say  $\cF^u$,  
and let $P$ denote the set of its singularities union the boundary
components of $M$. On the punctured surface $M-P$,  
$\cF^u$ is a non-singular foliation. 
Points  $x\in M-P$ are called \de{regular points} and  
the leaf of $\cF^u$ 
containing these points is the leaf of the foliation on $ M-P$.
Thus the leaves containing regular points are either immersed lines 
in $M$ or else immersed half-lines which ``begin'' at a singularity. 
In the latter case the singular point is said to be \de{associated with the
leaf}. The leaves which contain non-regular points are 
called \de{trivial leaves}. These are of two types:
a singular point is consider a trivial leaf  as are
any leaves which are segments contained in the boundary of $M$.
We also recall the dynamical meaning of the invariant foliations.
Given a topological metric $d$ on $M$, 
\begin{equation}\label{dynmeaning}
d(\phi^n(x_1), \phi^n(x_2))\raw 0\ 
\end{equation}
as $n\raw\infty$ (respectively, $n\raw-\infty$)  if and only if 
$x_1$ and $x_2$ are on the same leaf of $\cF^s$ ($\cF^u$),
or their leaves are associated with the same singularity. 

In the universal Abelian cover  the singular foliations
lift to a pair $\tcF^u$ and $\tcF^s$.
For a point $\tx$ in $\tM$ its leaf of $\tcF^u$ is defined
as in the base, or equivalently, project $\tx$ to $x\in M$,
find the leaf of $x$ and then the leaf of $\tx$ is the lift of this
leaf in $\tM$ which contains $\tx$. 
The analog of \eqref{dynmeaning} also holds
using any equivariant metric $\td$.

\begin{theorem}\label{pA1}
Let $\phi:M^2\raw M^2$ be rel \pA, 
$\tM$ is the universal Abelian cover, 
and $\tphi$ is a lift of $\phi$ to $\tM$.
The following are equivalent:
\begin{compactenum}
\item $\tphi$ is totally transitive,
\item $\rho(\phi_*) = 1$ and $0\in Int(\rot_{F}(\tphi^{N(\phi_*)}))$, 
\item The set of periodic  points of $\tphi$ is dense
in $\tM$,
\item $\tphi$ is topologically mixing,
\item There is a periodic regular point
$\tx$ of $\tphi$ so that the leaf of the 
lifted foliation $\tcF^u$ containing $\tx$ is dense in
$\tM$. 
\end{compactenum}
\end{theorem}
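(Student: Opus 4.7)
The plan is to deduce the equivalence of (a), (b), (c), and (d) directly from Theorem~\ref{MST} applied to the twisted skew product $\tau$ corresponding to $\tphi$ via Proposition~\ref{summary}, and then to handle (e) separately by a hyperbolic-mixing argument.

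First, I would translate each of (a)--(d) into the corresponding property of $\tau$. Since iterates of a twisted skew product are twisted skew products as in \eqref{skewiteq} and iterates of rel \pA\ maps are rel \pA, Proposition~\ref{summary}(a) applied to every $\tphi^k$ equates total transitivity and topological mixing of $\tphi$ with those of $\tau$, while Proposition~\ref{summary}(b) applied to every iterate yields the total ftp of $\tau$. The twisting matrix $A=\phi_*$ satisfies $\rho(A)=\rho(\phi_*)$ and $N(A)=N(\phi_*)$, and Proposition~\ref{summary}(c) applied to $\tphi^{N(\phi_*)}$ identifies $\rot_F(\tphi^{N(\phi_*)})$ with $\rot_F(\tau^{N(A)})$. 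Density of periodic points transfers across $\talpha$, which is continuous, onto, bijective on dense $G_\delta$ sets, and sends cylinders to topological disks. Thus Theorem~\ref{MST} produces (a)$\Leftrightarrow$(b)$\Leftrightarrow$(c)$\Leftrightarrow$(d).

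For the remaining equivalence with (e), I would argue (d)$\Leftrightarrow$(e) directly. For (d)$\Rightarrow$(e), invoke (c) to pick any periodic regular $\tx$ of period $N$ and a small product neighborhood $U_1$ of $\tx$ disjoint from singularities. For any nonempty open $U_2\subset\tM$, choose $U_2'$ with compact closure in $U_2$ and $\delta=\mathrm{dist}(U_2',\tM\setminus U_2)>0$. Topological mixing applied along $n=Nk$ yields points $\tilde p_k\in\tphi^{Nk}(U_1)\cap U_2'$ for all large $k$; uniform hyperbolicity of $\tphi^N$ at $\tx$ with stable rate $\lambda^s<1$ makes $\tphi^{Nk}(U_1)$ a thin strip of transverse width at most $C(\lambda^s)^k$ about an arc of the leaf through $\tx$, so for $k$ large enough that $C(\lambda^s)^k<\delta$ there is a companion point $\tilde q_k$ on that leaf lying in $U_2$. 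Hence the leaf of $\tcF^u$ through $\tx$ meets every open set and is dense. For (e)$\Rightarrow$(d), each $\tphi^j$-image of the given dense leaf is again dense, and the forward $\tphi$-iterate of a small neighborhood of any $q$ in the leaf's intersection with $U_1$ is a thin strip about an expanding arc in $\tphi^{n\bmod N}$ of the dense leaf; by density of each such image this strip meets $U_2$ for all sufficiently large $n$.

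The main obstacle is the (e)$\Rightarrow$(d) step: arranging that the expanding arc swept out by $\tphi^n$ applied to a neighborhood of a generic $q$ in the leaf's intersection with $U_1$ actually enters $U_2$ for all large $n$. When $q$ is close to $\tx$ on the leaf the swept parameter range exhausts the whole leaf as $n\to\infty$ and density gives the conclusion, but for $q$ far from $\tx$ only a forward half-leaf is swept; the resolution uses density of the leaf together with the deck equivariance \eqref{equilift} to reduce to the first situation after a deck translation, invoking $\rho(\phi_*)=1$ (itself a consequence of (e) via the escaping-orbit estimate of Remark~\ref{MSTrk}) to prevent exponential deck drift from spoiling the local hyperbolic bookkeeping at the translated basepoint.
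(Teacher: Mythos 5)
Your treatment of (a)--(d) matches the paper: Proposition~\ref{summary} plus Theorem~\ref{MST} applied to the corresponding twisted skew product, with the total ftp coming from Proposition~\ref{summary}(b) applied to all iterates. That part is fine.

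Your (d)$\Rightarrow$(e) argument (a thin-strip hyperbolic shadowing argument: $\tphi^{Nk}(U_1)$ collapses transversally onto the unstable leaf through the periodic point $\tx$, so companion points of the $\tilde p_k$ lie on the leaf and in $U_2$) is a genuine alternative to the paper's and should work. The paper instead argues symbolically: it picks a point $\hht$ with dense $\tau^{n_0}$-forward orbit, concatenates the periodic block of $\ut$ (the symbolic representative of $\tx$) with an allowable transition into $\hht$, and observes the image $\talpha(w)$ is backward-asymptotic to $\tx$ hence on its unstable leaf while being forward-dense. Both use the same underlying hyperbolicity; the symbolic version avoids any metric estimates.

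The gap is in (e)$\Rightarrow$(d), and it is exactly the obstacle you flag but do not resolve. Density of a single unstable half-leaf through a periodic $\tx$ in the noncompact cover $\tM$ does not, by itself, give the recurrence needed for topological mixing, because the arcs $\tphi^n(\text{small piece near }q)$ grow around the wandering base point $\tphi^n(q)$, and for large $n$ this base point can drift away from any given $U_2$; controlling that drift is precisely what the rotation-set condition $0\in Int(\rot_{F}(\tphi^{N(\phi_*)}))$ is for, and you never derive it. Invoking $\rho(\phi_*)=1$ alone is insufficient --- Example (d) in \S\ref{examples} and the shear examples show $\rho=1$ without interior in the rotation set. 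The paper avoids this by proving (e)$\Rightarrow$(b) rather than (e)$\Rightarrow$(d): first $\rho(\phi_*)=1$ by the escape estimate you cite, then it projects the dense leaf to the Fried cover $\tM'$, notes density survives projection, and applies Lemma~\ref{boundlem}'s criterion (unbounded height cocycle in every linear direction implies $0\in Int(\rot)$) to the Fried-quotient skew product. That step is precisely the missing link in your proposal: you need to convert "the leaf reaches arbitrarily far in every deck direction" into the interior-rotation-set condition, and Lemma~\ref{boundlem} does exactly that. Without it, the deck-translation plus ``$\rho(\phi_*)=1$ prevents exponential drift'' sketch does not close; subexponential drift can still carry the base point of the expanding arc steadily away from a fixed $U_2$.
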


\begin{proof}
Let $\tau$ be a twisted skew product corresponding to $\tphi$,
and let $\talpha$ be the semiconjugacy given in \S\ref{semiconstruct}.
The equivalence of (a), (b), (c), and (d) 
 follows from Proposition~\ref{summary}
and Theorem~\ref{MST}. 

Now assume (a)-(d) and let $\tx\in\tM$ be a periodic point
for $\tphi$ with period $n_0$. 
Since the collection of singular points is a discrete set 
in $\tM$ and periodic points of $\tphi$ are dense, we
may assume that $\tx$ is a regular point.
By hypothesis $\tphi^{n_0}$ and thus $\tau^{n_0}$ have  dense forward orbits
using Theorem~\ref{bairetrans}.  Treating $\tau$ as a countable state Markov
shift, let $\hht = \dots \hht_{-1} \hht_0\hht_1\dots$
be a point whose forward orbit is dense under $\tau^{n_0}$.

Let $\ut = (\us,\vn)\in \Sigma\times\Z^d$ be a period-$n_0$
point of $\tau$ with $\talpha(\us,\vn) = \tx$. 
 If the periodic
block of $\ut$ is $b = b_0 \dots b_{n_0-1}$, since $\tau$ is transitive
we have an allowable $b_0\raw \hht_0$.  Call this block $c$. 
Now form the sequence $w = \dots b\, b\, b\, c\,  \hht_0\, \hht_1 \dots $.
By construction, 
$\talpha(o_+(w,\tau^{n_0})) = o_+(\talpha(w), \tphi^{n_0})$ is 
dense in $\tM$. In addition, $\tau^{-{n_0}k}(w)\raw \ut$ as
 $k\raw\infty$ and so $\tphi^{-{n_0}k}(\talpha(w))\raw \tx$.
Thus if $\tL$ is the leaf of $\tcF^u$ which contains $\tx$,
since $\tx$ is a period $n_0$ point we have $\talpha(w)\in \tL$ and so 
$o_+(\talpha(w), \tphi^{n_0})\subset \tL$ as well, and so
$\tL$ is dense in $\tM$, finishing the proof  that
(a)-(d) implies (e).

Now assume (e) and let $\tx$ be a regular 
periodic point with  period $n_0$ and its unstable leaf 
$\tL$ dense in $\tM$. We show that (b) follows
by first showing that  $\rho(\phi_*) = 1$. 
Assume to the contrary that $\rho(\phi_*)\not=1$. 
Since $\det(\phi_*) = 1$, this implies that
that $\phi\I_*$ has an eigenvalue $\lambda$ with $|\lambda|> 1$. 
Then, just as in the proof of Theorem~\ref{MST} and
using \eqref{linearcompare}
there is a linear functional
$\Phi:\R^n\raw\R$ and a constant $C>0$ so  that
$\Phi(\tbeta(\ty)) > C/(\lambda-1)$ implies that 
$\Phi(\tbeta(\tphi^k(\ty)))\raw\infty$ as $k\raw-\infty$,
where $\tbeta:\tM\raw\R^d$ is the map constructed in
\S\ref{coverrot}. 

Now since $\cL$ is an unstable leaf containing the
periodic  regular point $\tx$, if $I$ is a small segment in $\cL$
with $\tx$ in its interior, then 
$\cap_{j>0}\tphi^{-n_0 j}(I)= \tx$ and 
$\cup_{j\geq 0}(\tphi^{n_0 j}(I)) = \cL$. Since $\cL$ is
dense in $\tM$ there are certainly 
$\ty\in \cup_{j\geq 0}(\tphi^{n_0 j}(I))$ with
$\Phi(\tbeta(\ty)) > C/(\lambda-1)$. Thus,  
$\Phi(\tbeta(\tphi^n(\ty)))\raw\infty$ as $n\raw -\infty$, but
$\tphi^{-n_0 j_0} (\ty)\in I$ for some $j_0 > 0$ implies
that $\tphi^{n_0 j} (\ty)\raw \tx$ as $j\raw -\infty$, a contradiction, 
finishing the proof that  $\rho(\phi_*) = 1$.
 
As in Definition~\ref{Ndef}, let 
$N(\phi_*)$ be such that $\spec(\phi_*^{N(\phi_*)}) = \seto$. If
$\tM'$ is the Fried cover of $\tphi^{N(\phi_*)}$,
$\tM' = \tM/F^{(N(\phi_*))}$ in
the notation of \S\ref{FFsect}.
Let $\chi:\tM\raw\tM'$ be the
projection and $\tphi'$ be the projection of $\tphi$ to
$\tM'$. 
Let $\tx' = \chi(\tx)$ where $\tx$ is a period-$n_0$
regular point with  its unstable leaf $\cL$ dense in $\tM$. 
If $\cL'$ is the unstable leaf in $\tM'$ which contains
$\tx'$, then certainly $\cL' = \chi(\cL)$, and
so $\cL'$ is dense in $\tM'$.
 Now as above let 
$I'\subset \cL'$ be a small segment with $\tx'$ in its interior and so
$\cup_{j>0}((\tphi')^{n_0 j}(I'))$ is dense in $\tM'$. 

Now let
$\tau'$ be the Fried quotient of $\tau^{N(\phi_*)}$ and $\talpha'$ the
induced semiconjugacy from $\tau'$ to $\tphi'$.
Since $\cup_{j>0}((\tphi')^{n_0 j}(I'))$ is dense in $\tM'$,
using that fact that $\talpha$ is a quasi-isometry as 
observed in \S\ref{quasisect}, certainly if $h'$ is height function
of $\tau'$, for every onto linear functional $L$,
the corresponding supremum in \eqref{nobound} is
infinite. Thus by Lemma~\ref{boundlem} and using Lemma~\ref{summary},
$0\in Int(\rot(\tphi')) = Int(\rot_{F}(\tphi^{N(\phi_*)}))$, completing the
proof.
\QED\
\end{proof}

\begin{remark}
When $G\subset H_1(M)$ with $\phi_*(G) = G$ and $H_1(M)/G$ torsion-free,
the analog of Theorem~\ref{pA1} for the cover $\tM_G$ has an almost
identical statement and proof.  The case where $H_1(M)/G$ has torsion
requires a more elaborate statement of condition (b), and we
leave it to the interested reader.
\end{remark}

\begin{remark}
Rel \pA\ maps on the torus which act on homology by 
the skew matrix $A$ in \eqref{skewmat} are 
the simplest examples of rel \pA\ maps which satisfy the hypothesis
of the theorem with $\phi_*\not=\id$.
Rel \pA\ maps in  this class were studied in \cite{doeff}, 
\cite{doeffmis}, and \cite{tali}. 
The general notion of $\rot_F$ when restricted
to this torus shear case was called the shear rotation interval
by Doeff, and many of the basic properties
as in \S\ref{rotsub} above were proved.  
A fascinating explicit example of map $\tpsi$ on the plane
which is the lift of a rel \pA\ map  in this class 
 was given by \cite{CG} and analyzed by 
\cite{mackay}. In the example $\rot_F(\tpsi) = [0,1]$,
and so by Theorem~\ref{pA1}, the example is not
transitive on the plane. However, for example, the theorem
does imply that 
$\delta_{(-1,0)}\circ \tpsi^2 $ is transitive.
\end{remark}

\begin{remark} When $\phi$ is  \pA\ rel a nonempty finite
set, its foliations have one-prongs and are thus non-orientable.
A true \pA\ map has oriented foliations if and only if
$\rho(\phi_*)$ is equal to its dilation $ \lambda > 1$.
(see, for example,
Lemma 4.3 in \cite{BB0}). Thus in the situation
of Theorem~\ref{pA1} where $\rho(\phi_*) = 1$, the invariant
foliations $\cF^u$ and $\cF^s$ are always non-orientable.
\end{remark}  

\begin{remark} The characterization of transitive twisted
skew products given in Theorem~\ref{MST} allows the twisting
matrix to be any $A\in\SL(d, \Z)$. For a surface \homeo\ 
$f$ whose lift is modeled by the skew product, $f_* = A$ 
has additional structure. When the surface is closed,
$f_*$ is symplectic and the addition of boundary components
only gives rise to permutations on $H_1(M)$. Thus the surface
dynamics applications do not require the full force of 
Theorem~\ref{MST}.
\end{remark}

\subsection{The case when $\phi_* = \id$.}\label{idcase}
Most of the literature on the dynamics of lifted
maps concerns the case of maps
isotopic to the identity. In this as well as
the more general case where $\phi$ acts trivially
on first homology, condition (b) in Theorem~\ref{pA1} is
replaced by the simpler condition $ 0\in Int(\rot(\tphi))$.

In this case there is a fair amount known about dynamical
representatives for elements of the rotation set. For
example, it follows from results of Ziemian (\cite{ziemian})
that for each $r\in Int(\rot(\tphi))$,  there is a compact,
$\phi$-invariant set $Y_r\subset M$, so that $\rot(y, \tphi) = r$
for all $y\in Y_r$.  In addition,
as a consequence of theorem of Jenkinson in
\cite{jenkinson}  (see also \cite{jarekthesis} 
for each such  $r$  there is a $\phi$-invariant, ergodic, fully supported
Gibbs probability measure $\mu_{r}$ with
$\rot(x, \tphi) = r$ for $\mu_{r}$-almost every point.

In contrast, it is a simple consequence of 
Theorem~\ref{pA1} that when
$\phi_* = \id$ the rotation number doesn't exist
for the topologically typical point in $M$.
This implies that the topologically generic
point is not generic for any $\phi$-invariant Borel measure.
Here is the argument:
assume that $0 \in Int (\rot(\tphi))$. Thus from 
Theorem~\ref{pA1} there is a dense, $G_\delta$ set
$X_0\subset M$ so that $x\in X_0$ implies that
for any lift $\tx$ of $x$, the orbit $o(\tx,\tphi)$ is dense
in $\tM$. Thus for  $x\in X_0$, if $\rot(x,\tphi)$ exists, 
 it must be zero. But now
pick $\vp/q \in  Int (\rot(\tphi))$ with $0 \not = \vp/q$ 
and let $\tf = \delta_{-\vp,q}\circ \tphi^q$. Using 
\eqref{rotpower}, $0 \in Int (\rot(\tf))$ and so again
using Theorem~\ref{pA1}, there is a dense, $G_\delta$ set
$X_1\subset M$ so that $x\in X_1$ implies that
 if $\rot(x,\tf)$ exists, it must be zero and so  $\rot(x,\tphi) = \vp/q$.
It then follows that for $x$ in the dense, $G_\delta$-set 
$X_0\cap X_1$, $\rot(x, \tphi)$ cannot exist.

While there are rel \pA\ maps in every isotopy class,  
there are some classes, for example the identity class,
which cannot contain a true \pA\ map, \ie\ one whose
invariant foliations have no interior one-prong singularities. 
Thurston  observed that there are many mapping classes
which act trivially on $H_1(M)$ which do contain a true \pA\ maps, or in
the language of his classification theorem, are \pA\
mapping classes (\cite{thurston}). 
The collection of mapping classes which act trivially
on $H_1(M)$ is called the Torelli group  and its
properties have been much studied (see \cite{johnson, farb} for
surveys). 

As dynamical systems \pA\ maps which act trivially on homology
are quite interesting, and there are many tools available for their
study such as the rotation set and twisted transition matrices
(\cite{friedcohom, BB}).  They also have useful isotopy stable properties.
For example, if $g$ is any \homeo\ which is isotopic to a true \pA\ $\phi_*$
with $\phi_* = \id$, then if $\tphi$ is a transitive lift to $\tM$,
the corresponding lift $\tg$ of $g$ will always have ``well-traveled''
orbits, \ie\ orbits that repeatedly visit every fundamental
domain in $\tM$. This follows from Handel's global
shadowing theorem (\cite{handel}).

\subsection{$H_1$-transitivity}
We now give the proof of Theorem~\ref{pA2} stated in the
Introduction.

\medskip
\textbf{Proof of Theorem~\ref{pA2}:}
Fix a lift $\tphi$ of $\phi$ to $\tM$.
Assume that $\phi$ is $H_1$-transitive, and so
for some $q>0$, $\vn\in\Z^d$, we have that
$\eta:= \delta_{\vn}\circ\tphi^q$ is transitive. Now 
$\eta$ is a lift of $\tphi^q$ and so by Theorem~\ref{pA1},  
 $\rho(\phi_*^q) = 1$ and so $\rho(\phi_*) = 1$.

Now conversely, assume $\rho(\phi_*) = 1$. By Theorem~\ref{summary}(c),
$\rot_{F}(\tphi^{N(\phi_*)})$ has interior. Pick
$\vp/q\in Int(\rot_{F}(\tphi^{N(\phi_*)}))$. Now as in the
proof of Corollary~\ref{FFRmk}, 
we may write $\Z^d = W_1 \osum W'$ with
$W_1$ naturally identified with $\Z^d/F^{(kN(\phi_*))}$, for all
$k>0$.
Let $\vm\in\Z^d$ be  $\vm= (\vp, 0)$ with $0\in W'$ and so 
by \eqref{liftpower},
$\eta' :=\delta_{-\vm}\;\tphi^{qN(\phi_*)}$, 
 has $0\in Int(\rot_{F}(\eta'))$. Now
certainly $\rho(\phi_*) = 1$ implies $\rho(\eta')= 1$ and
so by Theorem~\ref{pA1}, $\eta'$ is transitive.

The implication (a) implies (c) also
follows from Theorem~\ref{pA1}.
 Now assume (c) and
for the sake of contradiction that $\rho(\phi_*) \not= 1$.
Fix a lift $\tphi$ and let $\tau$ be a twisted skew product
that corresponds to it and $\talpha$ the semiconjugacy given in 
\S\ref{semiconstruct}. 
Since $\rho(\phi_*) \not= 1$,  using Remark~\ref{MSTrk},
there is a nonzero linear functional $\Phi:\R^d\raw\R$ and
a constant $C'>0$ so  $|\Phi(\vn - \vn')| > C'$ implies 
that for any $\us, \us'\in\Sigma$,  
\begin{equation}\label{runsoff2}
 \left|\Phi\circ\pi_2(\tau^k(\us,\vn)) -
\Phi\circ\pi_2(\tau^k(\us',\vn'))\right|\raw \infty,
\end{equation} 
as $k\raw\infty$.
But if $\tL$ is a leaf of $\tcF^u$ that is dense in $\tM$,
certainly there are $(\us,\vn)$ and $(\us', \vn')$ 
with  $\talpha(\us,\vn) \in \tL$ and $\talpha(\us',\vn') \in \tL$
and  $|\Phi(\vn - \vn')| > C'$. But then by \eqref{runsoff2}
and the fact from \S\ref{quasisect} that $\talpha$ is 
a quasi-isometry from the 
the pseudometric $d_1((\us, \vn), (\us', \vn')) =
\|\vn-\vn'\|$ to a
lifted metric $\td$ on $\tM$, we have
$\td(\talpha(\tau^k(\us,\vn)), \talpha(\tau^k(\us',\vn')))
\raw\infty$. Thus since $\talpha$ is a semiconjugacy,
$\td(\tphi^k(\talpha(\us,\vn)), \tphi^k(\talpha(\us',\vn')))
\raw\infty$ as $k\raw\infty$ in contradiction to the fact that 
$\talpha(\us,\vn) $ and  $\talpha(\us',\vn')$ are on the same 
leaf of the unstable foliation (see \eqref{dynmeaning}).  \QED\

\subsection{The lifted foliations}

In the invariant foliations associated with a rel \pA\ map
on a compact surface all nontrivial leaves are dense in $M$.
For a $H_1$-transitive rel \pA\ map, for the lifted
foliations the typical leaf is dense, but there
are always non-dense leaves.

\begin{proposition}\label{folinfo}
Let $\tcF^u$ and $\tcF^s$ be the lifted foliations
to the universal Abelian cover $\tM$ of 
a rel \pA\ map $\phi:M\raw M$. 
\begin{compactenum}
\item If the lifted
foliation has one dense leaf, 
there is a dense $G_\delta$-set $Z\subset \tM$ so that
$\tx\in Z$ implies that the leaf of $\tcF^u$ containing
$\tx$ is dense in $\tM$. 
\item Each nontrivial leaf of the lifted foliations is unbounded.
\item The lifted foliations always has nontrivial leaves which are
not dense.
\end{compactenum}
\end{proposition}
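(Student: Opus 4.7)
I would prove the three parts separately, using a Baire category argument for (a), a covering-space/homology shadowing argument for (b), and a construction via periodic points with nonzero rotation vector for (c).

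For (a), fix a countable base $\{U_n\}$ for the topology of $\tM$ and set
\[
Z_n := \{\tx \in \tM : \tx \text{ is regular and the leaf of } \tcF^u \text{ through } \tx \text{ meets } U_n\}.
\]
Each $Z_n$ is open in the (dense open) set of regular points, because the foliation $\tcF^u$ has a local product (flow-box) structure at any regular point, so if the leaf of $\tx$ meets $U_n$ at some $\ty$ then the leaves through all regular points in a small neighborhood of $\tx$ are realized by a continuous family of arcs that still enter $U_n$. Hence $Z := \bigcap_n Z_n$ is a $G_\delta$ consisting precisely of regular points on leaves that are dense in $\tM$. If the hypothesis gives one dense leaf $\tL_0$, then its regular points (a dense subset of $\tL_0$ since singularities are discrete) are in $Z$, so $Z$ is itself dense.

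For (b), let $\tL$ be a nontrivial leaf; I argue by contradiction that $\tL$ is unbounded. Fix a regular $\tx\in\tL$ and its projection $x=p(\tx)\in L$. Given any $g\in H_1(M)$, choose a loop $\alpha$ based at $x$ representing $g$. Because $L$ is dense in $M$ (and in fact minimal off the singular set), one can shadow $\alpha$ by an arc of $L$ based at $x$ with a short transverse closing-up segment, producing an arc $\beta\subset L$ from $x$ to a point $x'$ near $x$ whose homotopy class rel endpoints in a small tubular neighborhood of $\alpha$ realizes $g$. Lifting $\beta$ to $\tL$ starting at $\tx$ gives a terminal point within bounded distance of $\delta_g(\tx)$. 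Since $g$ was arbitrary and $|\delta_g(\tx)|\to\infty$ as $|g|\to\infty$ in $\tM$, the leaf $\tL$ is unbounded.

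For (c), first observe that if $\phi$ is not $H_1$-transitive then by Theorem~\ref{pA2} no leaf of $\tcF^u$ is dense, so every nontrivial leaf is non-dense and the statement is automatic. Thus assume $\phi$ is $H_1$-transitive; by Theorem~\ref{pA1}, $\rot_F(\tphi^{N(\phi_*)})$ has nonempty interior. Pick a $\phi$-periodic point $x$ of period $q$ and a lift $\tx$ with $\tphi^q(\tx)=\delta_{\vec r}(\tx)$ for some nonzero $\vec r$; such $\tx$ can be chosen so that $\vec r$ stabilizes the unstable leaf $\tL$ through $\tx$ (adjust by deck translation), so that $\delta_{-\vec r}\circ\tphi^q$ fixes $\tx$ and acts on $\tL$ as expansion by $\lambda^q$. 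Parametrizing $\tL$ by arclength with $\tgamma(0)=\tx$, iteration gives $\tphi^{nq}(\tgamma(t))=\delta_{n\vec r}(\tgamma(\lambda^{nq}t))$ (with bounded correction when $\phi_*\ne\id$, from the generalized-shear decomposition in Lemma~\ref{CFL}). Using the quasi-isometry between the symbolic model and $\tM$ from \S\ref{quasisect} together with boundedness of the height function, I would then show that the net displacement of $\tgamma(\lambda^{nq}t)$ from the axis $\R\vec r$ through $\tx$ is sublinear in $n\vec r$, so $\tL$ remains in a tubular neighborhood of a one-dimensional axis in $\tM$, and therefore cannot be dense in the two-dimensional space.

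The main obstacle will be making the confinement step in (c) precise: one must control the transverse wandering of $\tL$ as arclength grows exponentially, showing the drift is genuinely one-dimensional. This should reduce to a Birkhoff-cocycle bound for the height function over the symbolic skew product, applied to the fact that the periodic orbit of $\tx$ has a well-defined rotation vector $\vec r/q$ and segments of $\tL$ are $\tphi$-iterates of a bounded piece whose symbolic itinerary is controlled. The shadowing step in (b) is routine once density of $L$ is combined with the local product structure of $(\cF^u,\cF^s)$, so (b) is the easier of the two substantive steps.
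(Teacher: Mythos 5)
Part (a) is essentially the paper's argument: a Baire category argument on the open, dense sets of points lying on leaves hitting each element of a countable base.

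Parts (b) and (c), however, each contain a genuine gap. For (b), the assertion ``one can shadow $\alpha$ by an arc of $L$ $\ldots$ whose homotopy class rel endpoints in a small tubular neighborhood of $\alpha$ realizes $g$'' is false in general: density of $L$ tells you the leaf visits every small ball, but says nothing about which relative homotopy class an arc of $L$ between two nearby points represents, and the arc need not stay in a tubular neighborhood of $\alpha$ at all. The same issue is visible already for a linear Anosov map of $T^2$: the unstable leaf is a dense straight line, but arcs of that line only realize homology classes clustered near a single ray, not arbitrary $g\in H_1(T^2)$. The paper sidesteps this completely: if a nontrivial leaf of $\tcF^u$ were bounded in $\tM$, one could take $\Gamma = N\Z^d$ with $N$ large enough that the leaf embeds into the compact cover $\tM/\Gamma$; but $\tphi_\Gamma$ is rel \pA\ on that compact surface, so all its nontrivial leaves are dense there, a contradiction.

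For (c), the construction picks an arbitrary periodic lift $\tx$ with nonzero translation $\vec r$ and then attempts to show its leaf is confined to a tubular neighborhood of $\R\vec r$. This is the wrong periodic point in general. If $\vec r/q\in Int(\rot_F(\tphi^{N(\phi_*)}))$, then $\delta_{-\vec r}\circ\tphi^q$ is totally transitive by Theorem~\ref{pA1}, and the proof of Theorem~\ref{pA1}~(a)$\Rightarrow$(e) actually shows that \emph{every} regular periodic point of a totally transitive lift lies on a dense leaf; so the leaf through $\tx$ \emph{is} dense and any ``confinement'' conclusion is false. One must choose a periodic point whose rotation vector lies in $Fr(\rot)$. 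The paper's argument then proceeds indirectly via Lemma~\ref{boundlem}: if that leaf were dense, the supremum in \eqref{nobound} would be infinite for every linear functional, forcing $0$ into the interior of the shifted rotation set and contradicting the choice of a boundary rotation vector. This contrapositive route avoids entirely the difficult and, as you note, unresolved ``confinement'' estimate, which in any case proves too much (a tubular-neighborhood bound would be $O(1)$ transversally, stronger than either needed or true).
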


\begin{proof}
The proof of (a) is a minor alteration of a standard proof:
fix a countable base $U_n$ for the topology of $\tM$.
For each $n$, let $A_n$ be all the points of $\tM$ 
which are contained in a leaf of  $\tcF^u$ which 
intersects $U_n$.  Since there is
a dense leaf by hypothesis, it follows immediately
that each $A_n$ is dense. Each $A_n$ is also open as a consequence
of our slightly peculiar definition of ``leaf'' in \S\ref{leafdef}. 
Now $\cap A_n$ is exactly all the points of $\tM$
contained in dense leaves, and by the Baire category theorem,
$\cap A_n$ is dense-$G_\delta$.

To prove (b) note that if in $\tM$ a nontrivial leaf was bounded,
by letting $\Gamma = N \Z^d$ for $N$ large
enough and $\Z^d = H_1(M)$, 
the rel \pA\ map $\tphi_\Gamma$ on the compact manifold 
$\tM/\Gamma$ would  possess a nontrivial leaf
that was not dense, a contradiction. 

The proof of (c) starts with the observation that 
if $\rho(\phi_*) \not = 1$, then by Theorem~\ref{pA2}, the lifted
foliations have no dense leaves and so (c) certainly follows.
So assume now that $\rho(\phi_*) = 1$ and so from Theorem~\ref{pA2}
we may find a $\tg$ which is a transitive lift of an iterate $g := \phi^k$.
Now assume additionally that $\phi_* = \id$ and so $g_* = \id$ as well. 
By Theorem~\ref{friedthm} there is  a periodic point
$x_0$  of $g$ such
that its lift $\tx_0$ to $\tM$ satisfies 
 $\rot(\tx_0, \tg) \in Fr(\rot(\tphi))$. 
 Using  Lemma~\ref{boundlem}
as in the proof of Theorem~\ref{pA1}, we see that if the leaf containing
$\tx_0$ (or one of its associated leaves if $\tx_0$ is a singularity) 
were dense,  $\rot(\tx_0, \tg)$ wouldn't be a boundary point of
$\rot(\tg)$, a contradiction.

In the more general situation that $\rho(\phi_*) = 1$,
use the argument in the previous  paragraph to show
that  the foliations in the Fried quotient of $\phi^{N(\phi_*)}$
have non-dense leaves  and so the lifts of these leaves to
$\tM$ are also not dense.
\QED\
\end{proof}

\begin{remark}
 Under a variety of hypotheses which include the case where 
a rel \pA\ $\phi$ is isotopic to the identity on a closed surface,
Pollicott and Sharp show in \cite{pollicottsharp} that
the transverse measures on the lifted foliations are
ergodic. 
\end{remark}

\bibliography{trans}

\begin{thebibliography}{BGH93}

\bibitem[BB]{BB}
Gavin Band and Philip Boyland.
\newblock Entropy and dynamics in {\MakeUppercase{a}}belian covering spaces.
\newblock in preparation.

\bibitem[BB07]{BB0}
Gavin Band and Philip Boyland.
\newblock The {B}urau estimate for the entropy of a braid.
\newblock {\em Algebr. Geom. Topol.}, 7:1345--1378, 2007.

\bibitem[BGH93]{bgh}
Philip Boyland, John Guaschi, and Toby Hall.
\newblock L'ensemble de rotation des hom\'eomorphismes pseudo-{A}nosov.
\newblock {\em C. R. Acad. Sci. Paris S\'er. I Math.}, 316(10):1077--1080,
  1993.

\bibitem[Boy94]{bdamster}
Philip Boyland.
\newblock Topological methods in surface dynamics.
\newblock {\em Topology Appl.}, 58(3):223--298, 1994.

\bibitem[CB88]{cb}
Andrew~J. Casson and Steven~A. Bleiler.
\newblock {\em Automorphisms of surfaces after {N}ielsen and {T}hurston},
  volume~9 of {\em London Mathematical Society Student Texts}.
\newblock Cambridge University Press, Cambridge, 1988.

\bibitem[CG05]{CG}
S.~Cerbelli and M.~Giona.
\newblock A continuous archetype of nonuniform chaos in area-preserving
  dynamical systems.
\newblock {\em J. Nonlinear Sci.}, 15(6):387--421, 2005.

\bibitem[Cou04]{coudene}
Yves Coudene.
\newblock Topological dynamics and local product structure.
\newblock {\em J. London Math. Soc. (2)}, 69(2):441--456, 2004.

\bibitem[DM97]{doeffmis}
E.~Doeff and M.~Misiurewicz.
\newblock Shear rotation numbers.
\newblock {\em Nonlinearity}, 10(6):1755--1762, 1997.

\bibitem[Doe97]{doeff}
H.~Erik Doeff.
\newblock Rotation measures for homeomorphisms of the torus homotopic to a
  {D}ehn twist.
\newblock {\em Ergodic Theory Dynam. Systems}, 17(3):575--591, 1997.

\bibitem[Far06]{farb}
Benson Farb.
\newblock Some problems on mapping class groups and moduli space.
\newblock In {\em Problems on mapping class groups and related topics},
  volume~74 of {\em Proc. Sympos. Pure Math.}, pages 11--55. Amer. Math. Soc.,
  Providence, RI, 2006.

\bibitem[FLP91]{flp}
A.~Fathi, F.~Lauderbach, and V.~Poenaru.
\newblock {\em Travaux de {T}hurston sur les surfaces}.
\newblock Soci\'et\'e Math\'ematique de France, Paris, 1991.
\newblock S\'eminaire Orsay, Reprint of {\it Travaux de Thurston sur les
  surfaces}, Soc.\ Math.\ France, Paris, 1979 [ MR0568308 (82m:57003)],
  Ast\'erisque No. 66-67 (1991).

\bibitem[Fri82a]{friedcomm}
David Fried.
\newblock Flow equivalence, hyperbolic systems and a new zeta function for
  flows.
\newblock {\em Comment. Math. Helv.}, 57(2):237--259, 1982.

\bibitem[Fri82b]{friedtop}
David Fried.
\newblock The geometry of cross sections to flows.
\newblock {\em Topology}, 21(4):353--371, 1982.

\bibitem[Fri83]{friedtwist}
David Fried.
\newblock Periodic points and twisted coefficients.
\newblock In {\em Geometric dynamics (Rio de Janeiro, 1981)}, volume 1007 of
  {\em Lecture Notes in Math.}, pages 261--293. Springer, Berlin, 1983.

\bibitem[Fri86]{friedcohom}
David Fried.
\newblock Entropy and twisted cohomology.
\newblock {\em Topology}, 25(4):455--470, 1986.

\bibitem[GH55]{GH}
Walter~Helbig Gottschalk and Gustav~Arnold Hedlund.
\newblock {\em Topological dynamics}.
\newblock American Mathematical Society Colloquium Publications, Vol. 36.
  American Mathematical Society, Providence, R. I., 1955.

\bibitem[Gui89]{guiv}
Y.~Guivarc'h.
\newblock Propri\'et\'es ergodiques, en mesure infinie, de certains syst\`emes
  dynamiques fibr\'es.
\newblock {\em Ergodic Theory Dynam. Systems}, 9(3):433--453, 1989.

\bibitem[Han85]{handel}
Michael Handel.
\newblock Global shadowing of pseudo-{A}nosov homeomorphisms.
\newblock {\em Ergodic Theory Dynam. Systems}, 5(3):373--377, 1985.

\bibitem[Jen01]{jenkinson}
Oliver Jenkinson.
\newblock Rotation, entropy, and equilibrium states.
\newblock {\em Trans. Amer. Math. Soc.}, 353(9):3713--3739 (electronic), 2001.

\bibitem[Joh83]{johnson}
Dennis Johnson.
\newblock A survey of the {T}orelli group.
\newblock In {\em Low-dimensional topology (San Francisco, Calif., 1981)},
  volume~20 of {\em Contemp. Math.}, pages 165--179. Amer. Math. Soc.,
  Providence, RI, 1983.

\bibitem[Kit98]{kitchens}
Bruce~P. Kitchens.
\newblock {\em Symbolic dynamics}.
\newblock Universitext. Springer-Verlag, Berlin, 1998.
\newblock One-sided, two-sided and countable state Markov shifts.

\bibitem[Kwa95]{jarekthesis}
Jaroslaw Kwapisz.
\newblock {\em Rotation sets and entropy}.
\newblock PhD thesis, SUNY at Stony Brook, 1995.

\bibitem[Mac06]{mackay}
R.~S. MacKay.
\newblock Cerbelli and {G}iona's map is pseudo-{A}nosov and nine consequences.
\newblock {\em J. Nonlinear Sci.}, 16(4):415--434, 2006.

\bibitem[Mar77]{marcus}
Daniel~A. Marcus.
\newblock {\em Number fields}.
\newblock Springer-Verlag, New York, 1977.
\newblock Universitext.

\bibitem[Men05]{mentzen}
Mieczys{\l}aw~K. Mentzen.
\newblock {\em Group extension of dynamical systems in ergodic theory and
  topological dynamics}, volume~6 of {\em Lecture Notes in Nonlinear Analysis}.
\newblock Juliusz Schauder Center for Nonlinear Studies, Toru\'n, 2005.

\bibitem[MT91]{symbolpolytope}
Brian Marcus and Selim Tuncel.
\newblock The weight-per-symbol polytope and scaffolds of invariants associated
  with {M}arkov chains.
\newblock {\em Ergodic Theory Dynam. Systems}, 11(1):129--180, 1991.

\bibitem[New72]{intmat}
Morris Newman.
\newblock {\em Integral matrices}.
\newblock Academic Press, New York, 1972.
\newblock Pure and Applied Mathematics, Vol. 45.

\bibitem[Nit00]{nitica}
Viorel Nitica.
\newblock Examples of topologically transitive skew-products.
\newblock {\em Discrete Contin. Dynam. Systems}, 6(2):351--360, 2000.

\bibitem[Par03]{parwanithesis}
Kamlesh Parwani.
\newblock {\em Simple periodic orbits on surfaces}.
\newblock PhD thesis, Northwestern University, 2003.

\bibitem[PP06]{parrypollicott}
William Parry and Mark Pollicott.
\newblock Skew products and {L}ivsic theory.
\newblock In {\em Representation theory, dynamical systems, and asymptotic
  combinatorics}, volume 217 of {\em Amer. Math. Soc. Transl. Ser. 2}, pages
  139--165. Amer. Math. Soc., Providence, RI, 2006.

\bibitem[PS07]{pollicottsharp}
Mark Pollicott and Richard Sharp.
\newblock Pseudo-{A}nosov foliations on periodic surfaces.
\newblock {\em Topology Appl.}, 154(12):2365--2375, 2007.

\bibitem[PW]{tali}
Tali Pinsky and Bronislaw Wajnryb.
\newblock Dynamics of shear homeomorphisms of tori and the bestvina-handel
  algorithm.
\newblock arXiv:0704.1272.

\bibitem[Ree81]{rees}
Mary Rees.
\newblock Checking ergodicity of some geodesic flows with infinite {G}ibbs
  measure.
\newblock {\em Ergodic Theory Dynamical Systems}, 1(1):107--133, 1981.

\bibitem[Thu88]{thurston}
William~P. Thurston.
\newblock On the geometry and dynamics of diffeomorphisms of surfaces.
\newblock {\em Bull. Amer. Math. Soc. (N.S.)}, 19(2):417--431, 1988.

\bibitem[Zie95]{ziemian}
Krystyna Ziemian.
\newblock Rotation sets for subshifts of finite type.
\newblock {\em Fund. Math.}, 146(2):189--201, 1995.

\end{thebibliography}
\bibliographystyle{alpha}

\end{document}